\documentclass[11pt, reqno, en]{amsart}%
\usepackage[font=scriptsize]{caption}
\usepackage{amscd}
\usepackage[ocgcolorlinks]{hyperref}
\usepackage{aliascnt}
\newcommand{\mytheorem}[3]{%
  \newaliascnt{#1}{#3}
  \newtheorem{#1}[#1]{#2}
  \aliascntresetthe{#1}
  \expandafter\newcommand\csname #1autorefname\endcsname{#2}
}

\usepackage{mathrsfs,amssymb,amsthm, amsmath, mathtools}
\usepackage[%
a4paper,
margin=1in,
marginpar=.8in,
includeheadfoot%
]{geometry}
\usepackage{graphicx}
\usepackage{etoolbox}
\usepackage{ifxetex}
\usepackage{enumerate}
\usepackage{enumitem}
\makeatletter
\renewcommand*\env@cases[1][1.15]{%
  \let\@ifnextchar\new@ifnextchar
  \left\lbrace
  \def\arraystretch{#1}%
  \array{@{}l@{\quad}l@{}}%
}
\makeatother
\usepackage[dvipsnames]{xcolor}
\definecolor{darkgreen}{rgb}{0,0.5,0}
\definecolor{darkred}{rgb}{0.7,0,0}

\providetoggle{langen}
\makeatletter
\DeclareOption{en}{\toggletrue{langen}}
\DeclareOption{zh}{\togglefalse{langen}}
\newif\ifuserdefn@presubmit
\DeclareOption{presubmit}{\userdefn@presubmittrue}
\ProcessOptions
\ifuserdefn@presubmit
\usepackage[left, modulo, displaymath, mathlines]{lineno}
\linenumbers
\usepackage{seqsplit}
\usepackage[right]{showlabels}
\usepackage{xstring}
\renewcommand{\showlabelsetlabel}[1]{
  \noexpandarg%
  \parbox[t]{\marginparwidth}{\raggedright\upshape\small\color{blue}\expandafter\seqsplit\expandafter#1}
}

\AtBeginDocument{\newgeometry{
    vmargin={0in,0in},
    hmargin={0.8in,1.3in},
    includeheadfoot
  }
}
\overfullrule=1pt
\fi
\makeatother

\iftoggle{langen}{
  \usepackage[british]{babel}
  \usepackage{lmodern}
  \usepackage[T1]{fontenc}
  \newtheoremstyle{scheader}%
  {}
  {}
  {\itshape}
  {}
  {\scshape}
  {.}
  {0.5em}
  {}
  \theoremstyle{scheader}
  \newtheorem{defn}{Definition}[section]
  \newtheorem{mthm}{Theorem}

  \mytheorem{mcor}{Corollary}{mthm}
  \mytheorem{mlem}{Lemma}{mthm}

  \mytheorem{thm}{Theorem}{defn}
  \mytheorem{lem}{Lemma}{defn}
  \mytheorem{cor}{Corollary}{defn}
  \mytheorem{prop}{Proposition}{defn}
  
  \theoremstyle{remark}
  \newtheorem*{rmk}{Remark}
  \newtheorem{nrmk}{Remark}
  \newtheorem*{claim}{Claim}
  \newtheorem{nclaim}{Claim} 
  
  \newcommand{\I}{\mathrm{I}}

  }{
  \newtheorem{thm}{定理}[section]
  \mytheorem{defn}{定义}{thm}
  \mytheorem{lem}{引理}{thm}
  \mytheorem{cor}{推论}{thm}
  \mytheorem{prop}{命题}{thm}
  \newtheorem*{rmk}{注记}

  \renewcommand{\emph}[1]{\textbf{\texttt{#1}}}
  \newcommand{\I}{\mathrm{I}}
}
\AtBeginEnvironment{proof}{%
  \setcounter{stepnum}{0}
  \setcounter{nclaim}{0}
}
\newcounter{stepnum}


\def\XXint#1#2#3{{\setbox0=\hbox{$#1{#2#3}{\int}$ }
\vcenter{\hbox{$#2#3$ }}\kern-.6\wd0}}



\newcommand{\xtext}[1]{\ensuremath{\,\text{ #1 }}}


\newcommand{\II}{\mathrm{II}}
\newcommand{\III}{\mathrm{III}}

\newcommand{\myquad}[1][1]{\hspace*{#1em}\ignorespaces}
\usepackage[backrefs, msc-links, lite, abbrev, alphabetic]{amsrefs} 
\hypersetup{%
  pdfstartview=FitH,
  bookmarksopen=ture,
  pdfencoding=auto,
  colorlinks=false,
  linkcolor=cyan,
  pdfinfo={%
    Subject={58E20; 53C27; 53C50; 35J60; 35B65},
    Keywords={Dirac Harmonic maps, regularity, Lorentzian manifold, pseudo-Riemannian manifold}%
  }
}
\usepackage[perpage]{footmisc}

\usepackage{centernot}
\numberwithin{equation}{section}
\newcommand{\sepstar}{\par\vspace{3em}\hbox to \hsize{\hrulefill\qquad\lower1ex\hbox{*\qquad*\qquad*}\qquad\hrulefill}\vspace{3em}\par}

\newcommand{\D}{\centernot{D}}
\newcommand{\p}{\centernot{\partial}}
\let\div\relax
\DeclareMathOperator{\div}{\operatorname{div}}
\DeclareMathOperator{\curl}{\operatorname{curl}}
\title[Regularity for Dirac-Harmonic maps]{Regularity for Dirac-harmonic maps into certain pseudo-Riemannian manifolds}
\author{Wanjun Ai}
\address{School of Mathematics and Statistics\\
  Southwest University\\
  Chongqing 400715\\
P. R. China}
\address{School of Mathematical Sciences\\ %
  Shanghai Jiao Tong University\\ %
  Shanghai 200240\\ %
P. R. China}%
\email{wanjunai@swu.edu.cn}

\author{Miaomiao Zhu}
\address{School of Mathematical Sciences\\ %
  Shanghai Jiao Tong University\\ %
  Shanghai 200240 \\%
P. R. China}%
\email{mizhu@sjtu.edu.cn}%

\subjclass[2010]{58E20; 53C27; 53C50; 35J60; 35B65}%
\keywords{Dirac-harmonic maps, regularity, stationary Lorentzian manifolds, pseudo-Riemannian manifolds}%
\thanks{Part of this work was carried out when Wanjun Ai was a postdoc at the School of Mathematical Sciences, Shanghai Jiao Tong University and he would like to thank the institution for hospitality and financial support. Wanjun Ai is partially supported by the Fundamental Research Funds for the Central Universities (Grant No. SWU5330500542).
We would like to thank the referee for careful comments and helpful suggestions in improving the presentation of the paper.}
\date{\today}

\begin{document}
\begin{abstract}
  We show the smoothness of weakly Dirac-harmonic maps from a closed spin Riemann surface into stationary Lorentzian manifolds, and obtain a regularity theorem for a class of critical elliptic systems without anti-symmetry structures.
\end{abstract}
\maketitle
\section{Introduction}\label{sec:intro}
Motivated by the supersymmetric nonlinear sigma model from quantum field theory, e.g.\ \citelist{\cite{Deligne1999Quantum} \cite{Jost2009Geometry}}, the notion of Dirac-harmonic maps from spin Riemann surfaces into Riemannian manifolds were introduced in \cite{ChenJostLiWang2006Diracharmonic}. In the viewpoint of mathematics, they are generalizations of the classical harmonic maps and harmonic spinors. The action functional for Dirac-harmonic maps from spin Riemann surfaces preserves the conformal invariance, which makes the variational problem borderline cases of the Palais-Smale condition, and hence standard PDE methods can not be applied to get the regularity of critical points.

From the perspectives of sigma model from quantum field theory, see e.g.\ \cite{AlbertssonLindstromZabzine2003supersymmetric},  it is natural and of great interest to consider Dirac-harmonic maps from spin Riemann surfaces into pseudo-Riemannian manifolds, in particular, certain Lorentzian manifolds arising from general relativity e.g.\ \cite{KramerStephaniHerltMacCallum1980Exact, ONeill1983SemiRiemannian}. In this paper, we shall address this issue.  Suppose $(M^2,g_M)$ is a smooth and closed spin Riemann surface, $\Sigma M$ is a spinor (vector) bundle over $M$. A stationary Lorentzian manifold is a product manifold $\mathcal{N}\mathpunct{:}=\mathbb{R}^1\times N$, where $(N^n,g_N)$ is a compact Riemannian manifold of class $C^3$, equipped with a Lorentzian metric
\begin{equation}\label{eq:metric}
  g_{\mathcal{N}}=-\lambda(dr+\vartheta)^2+g_N,
\end{equation}
where $\lambda$ is a positive $C^2$ function on $N$; $\vartheta$ is a $C^2$ 1-form on $N$ and $dr^2$ is the standard metric on $\mathbb{R}^1$. Consider the space of smooth pairs $(\phi,\psi)$ defined by
\[
  \mathcal{X}(M,\mathcal{N})\mathpunct{:}=\left\{ (\phi,\psi)\mathpunct{:}\phi\in C^\infty(M,\mathcal{N})\text{ and } \psi\in \Gamma(\Sigma M\otimes\phi^{-1}T \mathcal{N}) \right\},
\]
and the following Lagrangian over $\mathcal{X}(M,\mathcal{N})$,
\begin{equation}\label{eq:L}
  \mathcal{L}(\phi,\psi)\mathpunct{:}=\frac{1}{2}\int_M g_{\mathcal{N}}(d\phi(e_\alpha),d\phi(e_\alpha))+\frac{1}{2}\int_M\left\langle \psi,\D\psi \right\rangle_{\Sigma M\otimes\phi^{-1}T \mathcal{N}},
\end{equation}
where $\left\{ e_\alpha \right\}$ is an orthonormal frame of $M$, $\left\langle \cdot,\cdot \right\rangle_{\Sigma M\otimes\phi^{-1}T \mathcal{N}}$ denotes the inner product induced from those on $\Sigma M$ and the pullback bundle $\phi^{-1}T \mathcal{N}$, and $\D$ is the \emph{Dirac operator along the map $\phi$}. Critical points $(\phi,\psi)\in \mathcal{X}(M,\mathcal{N})$ of \eqref{eq:L} are called \emph{Dirac-harmonic maps} from $M$ to $\mathcal{N}$.

In this paper, we shall investigate the regularity issue for Dirac-harmonic maps from a closed spin Riemann surface into stationary Lorentzian manifolds.
In order to define the weak solutions, we shall isometrically embed $(N,g_N)$ into some Euclidean space $(\mathbb{R}^K, g_0)$ and set
\[
  \bar{\iota} \mathpunct{:}=\mathrm{id}\times\iota: \mathcal{N}\longrightarrow \mathbb{R}^1\times \mathbb{R}^K\cong \mathbb{R}^{K+1}.
\]
The admissible space of weakly Dirac-harmonic maps is defined by
\begin{equation}\label{eq:weak-space}
  \begin{multlined}
    \mathcal{X}^w(M,\mathcal{N})\mathpunct{:}=\Bigl\{
      (\phi,\psi)\in W^{1,2}(M,\mathbb{R}^{K+1})\times W^{1,4/3}(M, \Sigma M\otimes\mathbb{R}^{K+1})\mathpunct{:}\text{ for a.e. $x\in M$, }\\
    \phi(x)\in \mathcal{N}\,\text{and for any $\nu\in T_{\phi(x)}^\perp \mathcal{N}$, $\left\langle \nu,\psi \right\rangle_{\mathbb{R}^{K+1}}=0$}\Bigr\},
  \end{multlined}
\end{equation}
where $\left\langle \cdot,\cdot \right\rangle_{\mathbb{R}^{K+1}}$ denotes the inner product induced from the pseudo-Riemannian metric of $\mathbb{R}^{K+1}$ (see \autoref{prop:iso-embedding} for the construction), which turns $\bar{\iota}$ into an isometrical embedding between pseudo-Riemannian manifolds. It is clear that $\mathcal{L}$ extends to the space $\mathcal{X}^w(M,\mathcal{N})$.
\begin{defn}\label{defn:DHM}
  A pair of fields $(\phi,\psi)\in \mathcal{X}^w(M,\mathcal{N})$ is called a \emph{weakly Dirac-harmonic map} from $M$ to $\mathcal{N}$, if it is a critical point of \eqref{eq:L} on $\mathcal{X}^w(M,\mathcal{N})$.
\end{defn}
Our main result reads:
\begin{mthm}\label{mthm:regularity}
  Suppose $M$ is a closed smooth spin Riemann surface, and $\mathcal{N}\mathpunct{:}=\mathbb{R}^1\times N$ is a $C^3$ stationary Lorentzian manifold with a $C^2$ metric given by \eqref{eq:metric}. If $(\phi,\psi)\in \mathcal{X}^w(M,\mathcal{N})$ is a weakly Dirac-harmonic map, then $\phi$ is H\"older continuous.
\end{mthm}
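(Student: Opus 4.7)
The overall plan is to exploit the fact that $\partial_r$ is a Killing vector field on $(\mathcal{N},g_\mathcal{N})$ to reduce the regularity problem to one for a critical elliptic system whose coefficients are only partially antisymmetric. After the isometric embedding $\bar{\iota}\mpt\mathcal{N}\embedto\RB^{K+1}$ from \autoref{prop:iso-embedding} is in place, I would write $\phi=(u,v)$ with $u\mpt M\to\RB^1$ the ``time'' component and $v\mpt M\to N\subset\RB^K$ the ``space'' component, and split $\psi=(\psi^0,\psi^1,\dots,\psi^K)$ accordingly. Testing the variational equation against arbitrary smooth sections tangent to $\bar\iota(\mathcal{N})$ (and spinors lying in $\Sigma M\otimes T\bar\iota(\mathcal{N})$) produces an Euler--Lagrange system of the form $-\Delta\phi = Q(\phi,\nabla\phi,\psi)$, $\p\psi = R(\phi,\nabla\phi,\psi)$ in which the right-hand sides are quadratic in $(\nabla\phi,\psi)$ and therefore only in $L^1$ for $(\phi,\psi)\in W^{1,2}\times W^{1,4/3}$; this is the usual critical threshold.

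The first key step is to derive the Noether-type conservation law associated with $\partial_r$. Because translation in the $r$-variable preserves $\mathcal{L}$ (the coefficients $\lambda,\vartheta$ depend only on $v$), testing the Euler--Lagrange system with an infinitesimal $r$-translation yields a divergence identity of the schematic form
\[
  \div\bigl(\lambda\,(du+\vartheta(dv)) + \Theta(v,\psi)\bigr)=0,
\]
where $\Theta$ is a spinorial quadratic correction. Since $\lambda>0$ is bounded from above and below, this is an elliptic divergence equation for $u$ with right-hand side in a Hardy-type space; by the Fefferman--Stein duality together with the $W^{1,4/3}$ information on $\psi$, one gains that $du$ lies in a better space than $L^2$ in small balls with small renormalised energy. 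In this way the time component $u$ becomes essentially subcritical and can be treated as a forcing term for the spatial part.

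The second step is to analyse the system for $(v,\psi)$. The part of $Q$ and $R$ coming from the second fundamental form of $N\embedto\RB^K$ is antisymmetric in the sense of Rivi\`ere and Chen--Jost--Wang, so it assembles into a connection $1$-form $\Omega\in L^2(M,\mathfrak{so}(K)\otimes T^*M)$; however the contributions from $\lambda$ and $\vartheta$ break this antisymmetry and give rise to additional first-order terms $W\cdot\nabla v$ and $V\cdot\psi$ with $W,V$ only bounded by $\abs{dv}+\abs{du}+\abs{\psi}^2$. This is precisely where the regularity theorem \emph{without antisymmetry structure} announced in the abstract must come in: I would construct a Coulomb-type gauge $P\in W^{1,2}(M,SO(K))$ adapted to $\Omega$, use the conservation law of the preceding paragraph to absorb the non-antisymmetric $W,V$ into a Hardy-controlled source, and derive a Morrey-type decay $\int_{B_r}\abs{\nabla\phi}^2+\abs{\psi}^4 \lesssim r^{2\alpha}$ on small balls with small renormalised energy. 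H\"older continuity of $\phi$ then follows from Morrey's Dirichlet growth theorem together with a standard covering argument using the small-energy threshold.

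The main obstacle, as I see it, is the construction of the gauge and the rigorous absorption of the non-antisymmetric terms: Rivi\`ere's original gauge construction uses antisymmetry in an essential way, and without the conservation law for $u$ the forcing terms from $\lambda,\vartheta$ are only in $L^1$ and cannot be handled by Wente- or Coifman--Lions--Meyer--Semmes-type estimates. Thus the entire argument hinges on coupling the Killing-symmetry conservation law for the time component with a non-antisymmetric version of the Rivi\`ere--Struwe regularity scheme; once this coupling works in a single small ball, iteration and bootstrapping to H\"older regularity are routine.
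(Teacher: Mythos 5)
Your high-level picture is right in several respects: you correctly identify the time/space splitting $\phi=(\varphi^0,\varphi)$, the fact that the second fundamental form of $N\embedto\RB^K$ produces an antisymmetric connection $1$-form $\Theta\in L^2(D,\mathfrak{so}(K)\otimes T^*D)$ while the contributions from $\lambda,\vartheta$ break antisymmetry, and the need for a Coulomb gauge plus Hardy--BMO duality plus a Morrey decay iteration. All of this matches the structure of \autoref{prop:weak-dirac-harmonic-local}, \autoref{mthm:reg-anti-protential} and its proof in \autoref{subsec:reg-anti-protential}.

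However, there is a genuine gap in the way you handle the spinor contribution. The terms in the map equation coming from the spinor — the curvature term $\mathcal{R}(\psi,\nabla\phi\cdot\psi)$ and its extrinsic cousins $\overline{\mathcal{R}},\overline{P}$ — are pointwise of order $\abs{\psi}^2\abs{\nabla\phi}$, which for $(\phi,\psi)\in W^{1,2}\times W^{1,4/3}$ (so $\psi\in L^4$ by Sobolev) lies only in $L^1$. You suggest getting extra room by combining ``Fefferman--Stein duality with the $W^{1,4/3}$ information on $\psi$'' to conclude $du$ is in a better space; but the $L^1$ source terms have no obvious div-curl or Hardy structure, and this step would not close. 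The paper's crucial preliminary step, missing from your outline, is to improve the \emph{spinor} first: since $\psi$ solves the decoupled Dirac-type equation $\p\psi^d=B_b^d\cdot\psi^b$ with $B\in L^2$, a Morrey-type bootstrapping for Dirac operators (the cited lemma from \cite{JostKeslerTolksdorfWuZhu2018Regularity} going back to \cite{Wang2010remark}) upgrades $\psi\in L^4$ to $\psi\in L^p$ for every $p>4$, so that all spinor-induced source terms $w,\upsilon$ land in $L^q(D)$ for every $1<q<2$. Only then do the right-hand sides of \eqref{eq:phi-simple} and the divergence equation $-\div(\lambda V^\sharp)=w$ become subcritical and the scheme of \autoref{mthm:reg-anti-protential} applies.

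A second, smaller issue: you propose to repackage the time-component equation as a \emph{genuine} divergence-free Noether current, $\div\bigl(\lambda(du+\vartheta(dv))+\Theta(v,\psi)\bigr)=0$, with the spinor correction absorbed into the divergence. The Killing symmetry does give a conserved Noether current in the smooth category, but the paper does not (and need not) realise $\mathcal{R}_0(\phi,\psi)$ as a divergence; instead it works with $-\div(\lambda V^\sharp)=w$ where $w\in L^q$ is simply a right-hand side. Your form, even if correct, would not help without the spinor improvement because the putative $\Theta(v,\psi)$ would still only be $L^{4/3}$, which is too weak to run the Hodge decomposition and Hardy--BMO estimates at the needed strength. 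So the Noether reformulation is at best a cosmetic variation; the real workhorse you omit is \autoref{thm:reg-spinor}.
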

If, in addition,  $(\mathcal{N},g_{\mathcal{N}})$ is smooth, then we can improve the regularity and show the smoothness of weakly Dirac-harmonic maps.
\begin{mthm}\label{mthm:smooth}
  Suppose $M$ and $\mathcal{X}^w(M,\mathcal{N})$ are given as before, and $(\mathcal{N},g_{\mathcal{N}})$ is a smooth pseudo-Riemannian manifold. If $(\phi,\psi)\in \mathcal{X}^w(M,\mathcal{N})$ is a weakly Dirac-harmonic map and $\phi$ is continuous, then $(\phi,\psi)$ is smooth.
\end{mthm}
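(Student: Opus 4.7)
The proof will be a local bootstrap argument, following the template used by Chen--Jost--Li--Wang and Wang--Xu for Riemannian targets, adapted to the present pseudo-Riemannian setting. The continuity assumption on $\phi$ is precisely what allows the entire argument to be carried out in a single smooth coordinate chart of $\mathcal{N}$.

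By compactness of $M$ and continuity of $\phi$, one covers $M$ by finitely many geodesic balls $U_\alpha$ such that each $\phi(U_\alpha)$ is contained in a single smooth coordinate chart of $\mathcal{N}$. On such a chart the metric $g_{\mathcal{N}}$, its Christoffel symbols $\Gamma^i_{jk}$, and its Riemann tensor are smooth. The Euler--Lagrange system for $\mathcal{L}$ then reads, schematically,
\[
  -\Delta \phi^i = \Gamma^i_{jk}(\phi)\,\langle d\phi^j, d\phi^k\rangle + \mathcal{R}^i(\phi, d\phi, \psi), \qquad \D \psi = \mathcal{S}(\phi, d\phi, \psi),
\]
where $\mathcal{R}^i$ is linear in $d\phi$ and quadratic in $\psi$, $\mathcal{S}$ is linear in each of $\psi$ and $d\phi$, and all coefficients are smooth functions of $\phi$. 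Because the domain $M$ is Riemannian, both $-\Delta$ and $\D$ are elliptic, and standard $L^p$-regularity theory applies to each equation.

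The key step is to upgrade $d\phi \in L^2_{\loc}$ to $d\phi \in L^p_{\loc}$ for some $p>2$. I would invoke the $\varepsilon$-regularity already developed in the proof of \autoref{mthm:regularity}: on sufficiently small balls $B_r \subset U_\alpha$, both the $W^{1,2}$-energy of $\phi$ and the $L^4$-norm of $\psi$ can be made as small as desired. Combined with a hole-filling or Morrey-decay argument, this produces $\int_{B_r}|d\phi|^2 \le C r^{2\alpha}$ for some $\alpha>0$, after which a Gehring--Giaquinta reverse H\"older argument yields $d\phi \in L^p_{\loc}$ for some $p>2$.

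Once this threshold is crossed, the rest of the bootstrap is mechanical. Using $\psi \in L^4$ (via the Sobolev embedding $W^{1,4/3} \embedto L^4$ in dimension two) together with $d\phi \in L^p$, the right-hand side of the $\psi$-equation lies in $L^r_{\loc}$ for some $r > 4/3$, so the Calder\'on--Zygmund theory for $\D$ gives $\psi \in W^{1,r}_{\loc}$; substituting back into the $\phi$-equation and applying elliptic $L^p$-theory improves $\phi$. Iterating alternately between the two equations, and invoking Schauder theory once H\"older continuity of the coefficients is attained, yields $(\phi,\psi) \in C^\infty_{\loc}$, and hence smoothness on all of $M$. The main obstacle is the initial $W^{1,p}$-improvement step: since the pseudo-Riemannian target offers no anti-symmetric structure \`a la Rivi\`ere, one cannot rely on conservation laws, and the argument must go through the $\varepsilon$-regularity/Morrey route inherited from \autoref{mthm:regularity}. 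Once $p>2$ is secured, the smoothness of $g_{\mathcal{N}}$ ensures that the iteration is unobstructed and terminates at $C^\infty$.
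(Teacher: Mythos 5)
The overall shape of your proposal — work in a single smooth chart (possible because $\phi$ is continuous), establish a higher-integrability threshold for $d\phi$, then run an alternating bootstrap between the $\phi$-equation and the $\psi$-equation — matches the paper's strategy. But your threshold step takes a different route than the paper, and as written it has a genuine gap.

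The paper establishes the threshold by proving $\phi\in W^{2,2}\cap W^{1,4}$ directly via a second-order difference-quotient estimate (following Chen--Jost--Li--Wang, Morrey, and Ladyzhenskaia--Ural'tzeva), as stated in \autoref{thm:C-alpha-reg}; the structural conditions \eqref{eq:Dirac-harmonic-sys-condi}, particularly $|H|\le C|\phi(x)-\phi(x_0)||d\phi|$, are arranged by choosing local coordinates on $\mathcal{N}$ with $\Gamma^k_{ij}(\phi(x_0))=0$ and then using the assumed continuity of $\phi$ to make $|\phi(x)-\phi(x_0)|$ small on small balls. This coordinate normalization is the device that converts continuity into quantitative smallness of the quadratic/cubic nonlinearities, and it is the engine of the whole estimate. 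You omit this entirely.

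Instead you propose a Caccioppoli/hole-filling plus Gehring reverse-H\"older argument, and here the proposal runs into trouble. First, the appeal to ``the $\varepsilon$-regularity already developed in the proof of \autoref{mthm:regularity}'' is misplaced: those estimates exploit the hidden anti-symmetry and divergence-free structure that exist only for stationary Lorentzian targets, whereas \autoref{mthm:smooth} concerns general smooth pseudo-Riemannian $\mathcal{N}$ and instead \emph{assumes} continuity. The smallness you actually need ($\int_{B_r}|d\phi|^2$ and $\|\psi\|_{L^4(B_r)}$ small for small $r$) is just absolute continuity of the integral and does not require \autoref{mthm:regularity} at all. Second, ``Morrey decay $\Rightarrow$ Gehring'' is a conflation: Gehring's lemma runs off a reverse-H\"older/Caccioppoli inequality, not off Morrey decay, and the Caccioppoli inequality is exactly what you do not establish. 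Testing the $\phi$-equation with $(\phi-\bar\phi)\eta^2$ produces, besides the manageable $\Gamma(\phi)(d\phi,d\phi)$ term (small via the coordinate choice and continuity), the cubic term $\int R(\phi)(\psi,d\phi\cdot\psi)(\phi-\bar\phi)\eta^2$. With only $\psi\in L^4$ and $d\phi\in L^2$ this is $O(\|\psi\|_{L^4}^2\|d\phi\|_{L^2})$, which after Young's inequality leaves an uncontrolled $\|\psi\|_{L^4(B_{2r})}^4$ right-hand side; to feed this into Gehring as a lower-order term you need $|\psi|^4\in L^{q_0}$ for some $q_0>1$, i.e.\ $\psi\in L^p$ for some $p>4$. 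That improved spinor integrability (\autoref{thm:reg-spinor} / the Morrey-space Dirac lemma) is established in the paper before \autoref{mthm:regularity} but is not part of the data you assume here, and you do not invoke it. Without it your reverse-H\"older step does not close, which is precisely why the paper chooses the difference-quotient route to $W^{2,2}\cap W^{1,4}$ (so that the cubic term is controlled by $\|\psi\|_{L^4}^2\|d\phi\|_{L^4}$) rather than the Caccioppoli/Gehring route.
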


When the targets are spherical, a Jacobian structure for the weakly Dirac-harmonic maps was derived in \cite{ChenJostLiWang2005Regularity}*{Prop.~2.1}, and the regularity follows directly from Wente's lemma \cite{Wente1969existence}, see also H\'elein \cite{Helein2002Harmonic}*{Thm.~3.1.2}. When the targets are compact hyper-surfaces in $\mathbb{R}^n$, it was observed in \cite{Zhu2009Regularity} that the map part of a Dirac-harmonic map satisfies an elliptic system with an $L^2$-antisymmetric structure and hence the results by Rivi\`ere \cite{Riviere2007Conservation} and Rivi\`ere-Struwe \cite{RiviereStruwe2008Partial} can be applied to get the regularity for weak solutions. The case of general compact Riemannian targets was handled independently in \cite{WangXu2009Regularity} and in \cite{ChenJostWangZhu2013boundary}. See \citelist{\cite{ChenJostWangZhu2013boundary} \cite{SharpZhu2016Regularity}} for some boundary regularity results. For regularity theory of weakly harmonic maps from Riemann surfaces into compact Riemannian manifolds, we refer to \citelist{\cite{Helein2002Harmonic} \cite{Riviere2007Conservation} \cite{RiviereStruwe2008Partial}}.

When the target manifolds become non-compact or non-Riemannian, however, in general, the $L^2$-antisymmetric structure for harmonic map systems into compact Riemannian targets observed in \cite{Riviere2007Conservation} may not be preserved anymore. Therefore, it is of great interest to explore the extent to which the methods developed for elliptic systems with an $L^2$-antisymmetric structure can be generalized to elliptic systems of more general types. This is partially achieved in \cite{Zhu2013Regularity}, where the smoothness of weakly harmonic maps into certain pseudo-Riemannian manifolds, in particular, stationary Lorentzian manifolds, is proved by extending the results in \cite{Riviere2007Conservation, RiviereStruwe2008Partial} to certain critical elliptic systems without an $L^2$-antisymmetric structure. In \cite{Zhu2013Regularity}, it was shown that the harmonic map system into stationary Lorentzian manifolds can be written as a critical elliptic system with a potential which is a priori in $L^2$ but not necessarily antisymmetric, however, by exploring the geometric properties of the targets, it is observed that this potential has certain hidden antisymmetric structure and divergence free structure, which is crucial in proving the regularity.

In this paper, we shall extend the result in \cite{Zhu2013Regularity}*{Thm.~1.2} further by establishing a regularity theorem for a more general class of critical elliptic systems without an $L^2$-antisymmetric structure in general domain dimensions. Let $\Omega$ be a bounded domain in $\mathbb{R}^m$, $m\geq1$, recall that for $1\leq p<\infty$ and $\lambda\geq0$, the Morrey norm of a function $f\in L_{\mathrm{loc}}^p(\Omega)$, is defined as
\[
  \lVert f \rVert_{M_\lambda^p(\Omega)}\mathpunct{:}=\sup_{x\in\Omega, r>0}\left( \frac{1}{r^{m-\lambda}}\int_{B_r(x)\cap\Omega}\lvert f \rvert^p \right)^{1/p}.
\]

\begin{mthm}\label{mthm:reg-anti-protential}
  Suppose $B\subset \mathbb{R}^m$ is the unit ball, $m\geq2$ and $n>0$ are two integers. Denote $\mathrm{M}(n)$ to be the set of $n\times n$ real matrices. For any $\Lambda>0$, there exists $\epsilon=\epsilon(m,\Lambda)>0$, such that for every $\Theta\in L^2(B,\mathfrak{so}(n)\otimes\wedge^1 \mathbb{R}^m)$, $\Omega\in L^{2}(B, \mathrm{M}(n)\otimes\wedge^1\mathbb{R}^m)$, $F, G\in W^{1,2}\cap L^\infty(B, \mathrm{M}(n))$, $Q\in W^{1,2}\cap L^\infty(B,\mathrm{GL}(n))$ and $W\in M_2 ^q(B, \mathrm{M}(n))$, $\upsilon\in M_2 ^s(B,\mathbb{R}^n)$ for some $1<q<2$ and $1<s<2$. If $u\in W^{1,2}(B,\mathbb{R}^n)$ is a weak solution of the following elliptic system
  \begin{equation}\label{eq:thm-anti-u}
    -\div (Q\nabla u)=\Theta\cdot Q\nabla u+F\Omega\cdot G\nabla u+ \upsilon,
  \end{equation}
  where $\Omega$ satisfies
  \begin{equation}\label{eq:thm-anti-upsilon}
    -\div \Omega=W,
  \end{equation}
  with the coefficients satisfying the following conditions

  \begin{equation}\label{eq:thm-anti-condi}
    \begin{split}
    &\lVert \nabla u \rVert_{M_2 ^2(B)}+\lVert \Theta \rVert_{M_2 ^2(B)}+\lVert \Omega \rVert_{M_2 ^2(B)}+\lVert W \rVert_{M_2 ^q(B)}\\
    &\myquad[5]+\lVert \nabla Q \rVert_{M_2 ^2(B)}+\lVert \nabla F \rVert_{M_2 ^2(B)}+\lVert \nabla G \rVert_{M_2 ^2(B)}
    \leq\epsilon
    \end{split}
  \end{equation}
  and
  \begin{equation}\label{eq:thm-anti-condi-infty}
    \lvert Q \rvert+\lvert Q^{-1} \rvert+\lvert F \rvert+\lvert G \rvert\leq\Lambda,\quad \text{a.e. in $B$},
  \end{equation}
  then, for some $\alpha\in(0,1)$, we have
  \[
    [u]_{C^\alpha(B_{1/2})}\leq C(m,\Lambda,s)\left( \epsilon+\lVert \upsilon \rVert_{M_2 ^s(B)} \right).
  \]
  In particular, $u$ is H\"older continuous in $B_{1/2}$.
\end{mthm}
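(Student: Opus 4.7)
The plan is to combine a Coulomb gauge for the antisymmetric potential $\Theta$ with a Hodge decomposition of $\Omega$ exploiting the near-divergence-free condition $-\div\Omega=W$, so as to recast \eqref{eq:thm-anti-u} as an elliptic system in which every critical coefficient is either in divergence form (a potential $d^{*}\xi$) or small in Morrey space. From such a reformulation, a Morrey-decay iteration on dyadic balls, in the spirit of Rivi\`ere--Struwe \cite{RiviereStruwe2008Partial} and its adaptation in \cite{Zhu2013Regularity}, will yield $\nabla u\in M^{2}_{2+2\alpha}(B_{1/2})$ for some $\alpha>0$, whence the asserted H\"older bound follows by the Morrey--Campanato embedding.

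First, since $\lVert\Theta\rVert_{M_{2}^{2}}\le\epsilon$ is small, I would apply the Rivi\`ere--Struwe Coulomb gauge theorem to obtain $P\in W^{1,2}(B,\mathrm{SO}(n))$ with $\lVert\nabla P\rVert_{M_{2}^{2}}\le C\epsilon$ and
\[
  \div\bigl(P^{-1}\nabla P+P^{-1}\Theta P\bigr)=0\quad\text{in } B.
\]
Multiplying \eqref{eq:thm-anti-u} by $P^{-1}$ and transferring one derivative via $P^{-1}\div(Q\nabla u)=\div(P^{-1}Q\nabla u)-\nabla P^{-1}\cdot Q\nabla u$ then rewrites the equation as
\[
  -\div(P^{-1}Q\nabla u)=\bigl(\nabla P^{-1}+P^{-1}\Theta\bigr)\cdot Q\nabla u + P^{-1}F\Omega\cdot G\nabla u + P^{-1}\upsilon,
\]
and the bracketed coefficient is divergence-free up to a $\nabla Q$-error (itself small in $M_{2}^{2}$), hence admits a potential form $\xi$ of small Morrey norm via Hodge decomposition on $B$.

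Next, I would Hodge-decompose $\Omega=d\alpha+d^{*}\gamma$ on $B$, with $\alpha,\gamma$ subject to boundary conditions that kill the harmonic component. Since $\Delta\alpha=\div\Omega=-W\in M_{2}^{q}$ with $q>1$, Adams' Morrey-space estimate for the Laplacian gives $\nabla\alpha\in M_{2+\delta}^{2}$ for some $\delta=\delta(q)>0$, so $F\,d\alpha\,G\cdot\nabla u$ is a genuine Morrey-small perturbation. The divergence-free piece is then treated through the product identity
\[
  F(d^{*}\gamma)G=d^{*}(F\gamma G)-(\nabla F)\ast\gamma G-F\gamma\ast(\nabla G),
\]
whose first summand is a pure divergence (to be absorbed on the left-hand side) and whose remaining two summands carry the small factors $\nabla F,\nabla G\in M_{2}^{2}$.

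Substituting these reductions brings \eqref{eq:thm-anti-u} into the schematic form $-\div[(\cdots)\nabla u]=(d^{*}\xi)\cdot\nabla u+(\text{small Morrey})\cdot\nabla u+d^{*}(\cdots)+P^{-1}\upsilon$, to which the Morrey--Hardy machinery of \cite{RiviereStruwe2008Partial,Zhu2013Regularity} applies: testing against the Hodge decomposition of $\nabla u$ on each $B_{r}(x_{0})\subset B_{1/2}$ yields a decay inequality $\int_{B_{r}}|\nabla u|^{2}\le\theta\int_{B_{2r}}|\nabla u|^{2}+Cr^{m-2+2\alpha}\lVert\upsilon\rVert_{M_{2}^{s}}^{2}$ with $\theta<1$, which iterates to the Morrey-space estimate on $\nabla u$ and thence to the claimed H\"older bound. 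The main obstacle I foresee is the simultaneous bookkeeping of the three variable matrix factors $Q,F,G$: the Coulomb gauge $P$ must interact with $Q$ on the left so as to produce only small error terms; the product-rule identity for $d^{*}(F\gamma G)$ must lean on the smallness of $\nabla F,\nabla G$ rather than of $F,G$ themselves (which are only $L^{\infty}$); and the $W$-driven piece $\nabla\alpha$, though nominally a perturbation, must be controlled in the \emph{correct} Morrey scale $M_{2+\delta}^{2}$ so that the iteration genuinely contracts rather than merely breaks even.
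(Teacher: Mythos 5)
Your plan coincides with the paper's proof in all its main moves: Uhlenbeck--Rivi\`ere--Struwe Coulomb gauge for $\Theta$, Hodge decomposition $\Omega=\nabla\eta+\curl\zeta$ with the non-curl-free piece gaining integrability from $\Delta\eta=-W\in M^q_2$, $q>1$, a second Hodge decomposition of $P^{-1}Q\nabla u$ on concentric balls, Hardy--BMO duality for the div-curl products, and a Morrey decay iteration. A few small but concrete corrections. First, after multiplying the equation by $P^{-1}$, the coefficient that becomes $\curl\xi\cdot P^{-1}Q$ is $\bigl(P^{-1}\Theta-\nabla P^{-1}\bigr)Q$, not $\bigl(\nabla P^{-1}+P^{-1}\Theta\bigr)Q$, and there is no ``$\nabla Q$-error'' in this step --- $Q$ simply passes through; the gauge identity $P^{-1}\nabla P+P^{-1}\Theta P=\curl\xi$ gives $\nabla P^{-1}=P^{-1}\Theta-\curl\xi\,P^{-1}$ directly. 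Second, the phrase ``absorbed on the left-hand side'' for $\curl(F\gamma G)\cdot\nabla u$ is misleading: this term is not a pure divergence that can be folded into $-\div(P^{-1}Q\nabla u)$; it is a div-curl (Hardy space) quantity, to be estimated by the same BMO-duality argument that handles $\curl\xi\cdot P^{-1}Q\nabla u$. The paper actually skips the product-rule expansion altogether and keeps $P^{-1}$, $Q$, $F$, $G$ attached as $W^{1,2}\cap L^\infty$ multipliers on the test function in the duality pairing, feeding $\lVert\nabla(P_{ab}F^{bc}G^{de})\rVert_{M^2_2}$ into the estimate; your expansion is an equivalent alternative but should be phrased as such. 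Third, the decay estimate is most naturally run at the $L^p$ level for a fixed $p\in(1,m/(m-1))$, testing against $\varphi\in W^{1,p^*}_0\hookrightarrow L^\infty$, rather than at $L^2$ directly; your iteration sketch should be carried out at that scale and then the conclusion returned to $M^2_2$ via H\"older.
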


To prove \autoref{mthm:regularity}, we shall first derive the extrinsic version of the Euler-Lagrange system for the functional $\mathcal{L}$ by carefully exploring the extrinsic geometric structures of stationary Lorentzian manifolds, see \autoref{sec:EL}. Then, we shall rewrite the system for the map part into the same form as in \eqref{eq:thm-anti-u} and \eqref{eq:thm-anti-upsilon}, see \autoref{prop:weak-dirac-harmonic-local}. Finally, thanks to the improved regularity of the spinor part (see \autoref{sec:regularity-spinor}),  \autoref{mthm:reg-anti-protential} can be applied to get the H\"older continuity of the map part.

For some other analytic aspects of harmonic maps into pseudo-Riemannian manifolds, we refer to \citelist{\cite{Helein2004Removability}\cite{Zhu2013Regularity}}. For regularity of harmonic maps into static Lorentzian manifolds (taking the metric \eqref{eq:metric} with $\vartheta\equiv0$), see \cite{Isobe1998Regularity}.

The rest of the paper is organized as follows: in \autoref{sec:pre}, we set up the background and recall some basic properties of Dirac operator. The Euler--Lagrange equation for weakly Dirac-harmonic maps into stationary Lorentzian manifolds is derived in \autoref{sec:EL}, then we prove the continuity and smoothness of weakly Dirac-harmonic maps in \autoref{sec:conti} and \autoref{sec:smooth}, respectively. Finally, in \autoref{sec:appx}, we collect some analytic results needed for the proof of \autoref{mthm:reg-anti-protential}.

\section{Preliminaries}\label{sec:pre}

Suppose $(M^m,g_M)$ is a smooth and closed spin Riemannian manifold of dimension $m\geq2$, $\Sigma M$ is a spinor bundle over $M$ and $(\mathcal{N},g_{\mathcal{N}})$ is a pseudo-Riemannian manifold $\mathcal{N}$ of class $C^3$ with a pseudo-Riemannian metric $g_{\mathcal{N}}$. We will consider a pair of fields $(\phi,\psi)$, where $\phi$ is a map from $M$ to $\mathcal{N}$ and $\psi$ is a section of the twisted bundle $\Sigma M \otimes\phi^{-1}T\mathcal{N}$, $\psi$ is called a \emph{spinor field along the map $\phi$}. If $\phi$ is continuous, then in local coordinates, the section $\psi$ can be written as (we will always adopt the Einstein summation convention)
\[
  \psi=\psi^0\otimes\partial_r(\phi)+\psi^j\otimes\partial_{y^j}(\phi),
\]
where each $\psi^j$ is a usual spinor on $M$, and $\partial_r=\partial_{y^0}$, $\left\{ \partial_{y^j} \right\}_{j=1}^n$ are the natural local basis on $\mathbb{R}^1$ and $N$, respectively. Denote $\widetilde{\nabla}$ to be the Levi-Civita connection on $\Sigma M\otimes\phi^{-1}T\mathcal{N}$, induced from these on $\Sigma M$ and $\phi^{-1}T\mathcal{N}$ (see \cite{LawsonMichelsohn1989Spin}*{Thm.~4.17} and \cite{ONeill1983SemiRiemannian}*{Thm.~3.11, p.~61}). Locally,
\[
  \widetilde{\nabla}\psi=\nabla\psi^i\otimes\partial_{y^i}(\phi)+(\Gamma^i_{jk}(\phi)d\phi^j)\psi^k\otimes\partial_{y^i}(\phi),
\]
where $\left\{ \Gamma_{jk}^i \right\}_{i,j,k=0}^n$ are the Christoffel symbols of the \emph{Levi--Civita connection} of $(\mathcal{N}, g_{\mathcal{N}})$ (see, e.g.~\cite{ONeill1983SemiRiemannian}*{Defn.~3.12, p.~62}). The \emph{Dirac operator along the map $\phi$} is defined as
\begin{equation}\label{eq:Dirac-op}
  \D\psi\mathpunct{:}=e_\alpha\cdot\widetilde{\nabla}_{e_\alpha}\psi
  =\p\psi^i\otimes\partial_{y^i}(\phi)+\Gamma_{jk}^i(\phi)d\phi^j(e_\alpha)(e_\alpha\cdot\psi^k)\otimes\partial_{y^i}(\phi),
\end{equation}
where $\cdot$ is the Clifford multiplication from $\Gamma(TM)\times\Gamma(\Sigma M)$ to $\Gamma(\Sigma M)$, and $\p$ is the usual Dirac operator on $\Sigma M$, i.e., $\p\psi^i=e_\alpha\cdot\nabla^{\Sigma M}_{e_\alpha}\psi^i$.

Recall that there is a Hermitian product on $\Sigma M$ such that Clifford multiplication by the unit real vector is orthogonal (see, e.g., \cite{LawsonMichelsohn1989Spin}*{Chap.~I, Prop.~5.16}), the Riemannian metric induced from the Hermitian product is denoted by $\left\langle \cdot,\cdot \right\rangle_{\Sigma M}$, and we can require that the connection on $\Sigma M$ compatible with $\left\langle \cdot,\cdot \right\rangle_{\Sigma M}$. The metric of $\Sigma M\otimes\phi^{-1}T \mathcal{N}$ induced from these on $\Sigma M$ and $\phi^{-1}T \mathcal{N}$ is denoted by $\left\langle \cdot,\cdot \right\rangle_{\Sigma M\otimes\phi^{-1}T \mathcal{N}}$. When $M$ is closed, the Dirac operator $\D$ is formally self-adjoint (see, e.g., \cite{LawsonMichelsohn1989Spin}*{Chap.~II, Prop.~5.3}), i.e.,
\[
  \int_M\left\langle \psi_1,\D\psi_2 \right\rangle_{\Sigma M\otimes\phi^{-1}T \mathcal{N}}=\int_{M}\left\langle \D\psi_1,\psi_2 \right\rangle_{\Sigma M\otimes\phi^{-1}T \mathcal{N}},\quad\forall\psi_1,\psi_2\in\Gamma(\Sigma M\otimes{\phi}^{-1}T \mathcal{N}),
\]
where $\Gamma(\cdot)$ denotes the collection of smooth sections. For more details on spin geometry and semi-Riemannian geometry, we refer to \citelist{\cite{LawsonMichelsohn1989Spin}\cite{ONeill1983SemiRiemannian}}.

Let $\mathcal{X}(M,\mathcal{N})$ be the space of smooth pairs $(\phi,\psi)$ as defined in \autoref{sec:intro}. It is clear that the Lagrangian \eqref{eq:L} on $\mathcal{X}(M,\mathcal{N})$ is
\[
  \mathcal{L}(\phi,\psi)=\frac{1}{2}\int_M\left\langle \nabla\phi,\nabla\phi \right\rangle_{TM\times\phi^{-1}T\mathcal{N}} +\frac{1}{2}\int_M\left\langle \psi,\D\psi \right\rangle_{\Sigma M\times\phi^{-1}T \mathcal{N}}.
\]
By the non-degenerateness of $g_{\mathcal{N}}$, a direct computation as in \cite{ChenJostLiWang2006Diracharmonic}*{Prop.~2.1} shows that the Euler--Lagrange equations of $\mathcal{L}$ on $\mathcal{X}(M,\mathcal{N})$ are given by
\begin{equation}\label{eq:EL-smooth}
  \begin{cases}
    \tau(\phi)=\mathcal{R}(\psi,\nabla\phi\cdot\psi),\\
    \p\psi^i=-\Gamma_{jk}^i(\phi)\nabla\phi^j\cdot\psi^k,
  \end{cases}
\end{equation}
where $\tau(\phi)$ is the tension map of $\phi$, and locally
\begin{align*}
  \tau(\phi)&=\left( \Delta_M\phi^k+g_{M}^{\alpha\beta}\Gamma_{ij}^k(\phi)\frac{\partial\phi^i}{\partial x^\alpha}\frac{\partial \phi^j}{\partial x^\beta} \right)\partial_{y^k},\\
  \Gamma_{jk}^i&=\frac{1}{2}g_{\mathcal{N}}^{il}\left( \partial_{y^j}g_{\mathcal{N};lk}+\partial_{y^k}g_{\mathcal{N};jl}-\partial_{y^l}g_{\mathcal{N};jk} \right).
\end{align*}
$\mathcal{R}$ is defined by the pseudo-Riemannian curvature of $(\mathcal{N},g_{\mathcal{N}})$. More precisely,
\begin{equation}\label{eq:R-phi-psi}
  \mathcal{R}(\psi,\nabla\phi\cdot\psi)\mathpunct{:}=\mathcal{R}(\phi,\psi)\mathpunct{:}= \frac{1}{2}R_{ijl}^s(\phi)\left\langle \psi^i,\nabla\phi^l\cdot\psi^j \right\rangle_{\Sigma M}\partial_{y^s}(\phi),
\end{equation}
where $\left\{ R_{ijl}^s \right\}_{i,j,l,s=0}^n$ are the components of the pseudo-Riemannian curvature tensor $R$ of $(\mathcal{N},g_{\mathcal{N}})$, which is defined by $R(\partial_{y^i},\partial_{y^j})\partial_{y^k}=R^l_{ijk}\partial_{y^l}$. The index is lowered by the metric as $R_{ijkl}=R^s_{ijk}g_{\mathcal{N};sl}$. It has the same symmetries as Riemannian curvature tensor, see e.g., \cite{ONeill1983SemiRiemannian}*{Prop.~3.36, p.~75}.

In what follows, we turn to the extrinsic point of view by isometrically embedding $(N,g_N)$ to another Riemannian manifold $(\overline{N}, \bar{g})$ of dimension $K$, and the results will be applied to the case $\overline{N}=\mathbb{R}^K$ in \autoref{subsec:EL-extrinsic}. Firstly, we note the following proposition.
\begin{prop}\label{prop:iso-embedding}
  Suppose $(N,g_N)$, $(\overline{N},\bar{g})$ are two Riemannian manifolds of class $C^3$, $(N,g_N)$ is compact, and $\iota \mathpunct{:}N \hookrightarrow\overline{N}$ is an isometrical embedding. Let $\mathcal{N}\mathpunct{:}= \mathbb{R}^1\times N$ be a Lorentzian manifold equipped with a metric given by \eqref{eq:metric}, and $\pi$ be the $C^2$ nearest projection map from a tubular neighborhood $V_\delta N$ of $N\subset\overline{N}$ to $N$. Extend the pullback function $\pi^*\lambda=\lambda\circ\pi$ and 1-form $\pi^*\vartheta$ on $V_\delta N$ to $\overline{N}$ by cut-off, such that they are equal to $\pi^*\lambda$ and $\pi^*\vartheta$ on $V_{\delta/2}N$; while on $\overline{N}\setminus V_{\delta}N$, they are equal to $1$ and $0$, respectively. On $\overline{\mathcal{N}}\mathpunct{:}=\mathbb{R}^1\times\overline{N}$, if we define a $C^2$ pseudo-Riemannian metric $g_{\overline{\mathcal{N}}}$ as follows
  \begin{equation}\label{eq:g-bar-calN}
    g_{\overline{\mathcal{N}}}=-\pi^*\lambda\cdot(dr+\pi^*\vartheta)^2+\bar{g},
  \end{equation}
  then $\bar{\iota} \mathpunct{:}=\mathrm{id}\times\iota: \mathcal{N}\to\overline{\mathcal{N}}$ is an isometric embedding between pseudo-Riemannian manifolds.
\end{prop}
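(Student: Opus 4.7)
The plan is to verify two things: that $\bar\iota = \mathrm{id}_{\mathbb{R}^1}\times\iota$ is an embedding of smooth manifolds, and that the metric $g_{\overline{\mathcal{N}}}$ defined by \eqref{eq:g-bar-calN} is a bona fide $C^2$ pseudo-Riemannian metric with respect to which $\bar\iota$ is isometric. The embedding part is immediate: $\iota$ is a smooth embedding by hypothesis and $\mathrm{id}_{\mathbb{R}^1}$ is trivially one, so their product is a smooth embedding $\mathcal{N}\hookrightarrow\overline{\mathcal{N}}$.

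The core computation is the isometric identity $\bar\iota^* g_{\overline{\mathcal{N}}} = g_{\mathcal{N}}$, which hinges on a single observation: the nearest-point projection satisfies $\pi\circ\iota = \mathrm{id}_N$, because $\iota(N)\subset N$ and $\pi$ restricts to the identity on $N$. Consequently, for any $C^2$ function or differential form $T$ on $N$ one has $\iota^*(\pi^* T) = (\pi\circ\iota)^* T = T$. Applying this to $T=\lambda$ and $T=\vartheta$, together with $\bar\iota^* dr = dr$ (since the $\mathbb{R}^1$ factor of $\bar\iota$ is the identity) and $\iota^*\bar g = g_N$ (since $\iota$ is an isometric embedding of Riemannian manifolds), the definition \eqref{eq:g-bar-calN} yields
$$
\bar\iota^* g_{\overline{\mathcal{N}}} = -\lambda\,(dr + \vartheta)^2 + g_N = g_{\mathcal{N}},
$$
which is exactly \eqref{eq:metric}. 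Note that the cut-off extensions of $\pi^*\lambda$ and $\pi^*\vartheta$ play no role here, because $\bar\iota(\mathcal{N})\subset\mathbb{R}^1\times V_{\delta/2}N$, where the extensions coincide with $\pi^*\lambda$ and $\pi^*\vartheta$ unmodified.

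It remains to confirm that $g_{\overline{\mathcal{N}}}$ is genuinely pseudo-Riemannian (i.e.\ non-degenerate) on all of $\overline{\mathcal{N}}$. In local coordinates $(r, y^1,\ldots, y^K)$, the matrix of $g_{\overline{\mathcal{N}}}$ has entries $g_{rr} = -\tilde\lambda$, $g_{ri} = -\tilde\lambda\,\tilde\vartheta_i$ and $g_{ij} = \bar g_{ij} - \tilde\lambda\,\tilde\vartheta_i\tilde\vartheta_j$, where $\tilde\lambda, \tilde\vartheta$ denote the cut-off extensions; a Schur-complement computation then gives $\det g_{\overline{\mathcal{N}}} = -\tilde\lambda\cdot\det\bar g$. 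This is nonzero provided the cut-off is arranged so that $\tilde\lambda>0$ throughout $\overline N$, which is easily done, e.g.\ by setting $\tilde\lambda = \chi\cdot\pi^*\lambda + (1-\chi)$ for a $C^2$ cut-off $\chi$ equal to $1$ on $V_{\delta/2}N$ and supported in $V_\delta N$ (both $\pi^*\lambda$ and $1$ are positive, so any convex combination is). The regularity class $C^2$ of $g_{\overline{\mathcal{N}}}$ then follows from the $C^2$ regularity of $\pi$, $\lambda$, $\vartheta$ and the $C^3$ regularity of $\bar g$. There is no substantive analytic obstacle; the whole proposition is bookkeeping around the identity $\pi\circ\iota = \mathrm{id}_N$, and the only point demanding any care is the choice of cut-offs that preserve both the positivity of $\tilde\lambda$ and the $C^2$ regularity globally on $\overline N$.
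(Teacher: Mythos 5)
The paper does not actually supply a proof of this proposition; it is followed only by a remark interpreting the construction geometrically and noting $L^\infty$ bounds on the metric, its Christoffel symbols, the curvature, and the second fundamental form. Your write-up is a correct and complete verification of what the paper leaves implicit: the product $\mathrm{id}\times\iota$ is an embedding, the identity $\pi\circ\iota=\mathrm{id}_N$ (so the cut-off is invisible on $\bar\iota(\mathcal{N})\subset\mathbb{R}^1\times V_{\delta/2}N$) gives $\bar\iota^*g_{\overline{\mathcal{N}}}=g_{\mathcal{N}}$, and the Schur-complement computation $\det g_{\overline{\mathcal{N}}}=-\tilde\lambda\cdot\det\bar g$ establishes non-degeneracy once the cut-off is arranged, e.g.\ as a convex combination $\chi\cdot\pi^*\lambda+(1-\chi)$, so that $\tilde\lambda>0$ globally. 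One small addition worth making explicit: the negativity of this determinant, together with connectedness of $\overline{\mathcal{N}}$ and the fact that $g_{\overline{\mathcal{N}}}$ equals the standard Lorentzian metric $-dr^2+\bar g$ outside $\mathbb{R}^1\times V_\delta N$, forces the signature to be $(1,K)$ everywhere, so $g_{\overline{\mathcal{N}}}$ is in particular Lorentzian rather than merely non-degenerate.
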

\begin{rmk}
  Geometrically, the above construction means $g_{\overline{\mathcal{N}}}$ is the standard Lorentzian metric $-dr^2+\bar{g}$ on $\overline{\mathcal{N}}\setminus (\mathbb{R}^1\times V_\delta N)$ and when restricted to $\mathcal{N}$, it is exactly the metric $g_{\mathcal{N}}$. In particular, since $N$ is compact, $g_{\overline{\mathcal{N}}}$ and all its derivatives are $L^\infty$ bounded on $\overline{\mathcal{N}}$, which implies that the pseudo-Riemannian Christoffel symbols and the pseudo-Riemannian curvature of $(\overline{\mathcal{N}}, g_{\overline{\mathcal{N}}})$ are $L^\infty$ bounded. Note that the second fundamental form of $\mathcal{N}\subset\overline{\mathcal{N}}$ is also $L^\infty$ bounded, see \eqref{eq:bdd-A}.
\end{rmk}
Thanks to the above proposition, we can view $\mathcal{N}$ as a submanifold of $\overline{\mathcal{N}}$, and define the second fundamental form  as in Riemannian case, i.e.,
\[
  \bar{A}(X,Y)\mathpunct{:}=\left( \nabla_X^{\bar{\iota}^{-1}T\overline{\mathcal{N}}}(d\bar{\iota}) \right)(Y)=\nabla_{d\bar{\iota}(X)}^{T\overline{\mathcal{N}}}(d\bar{\iota}(Y))-d\bar{\iota}\left( \nabla_X^{T \mathcal{N}}Y \right),\quad X,Y\in\Gamma(T \mathcal{N}).
\]
Let
\[
  \phi=(\varphi^0,\varphi),\quad \bar{\phi}=\bar{\iota}\circ\phi, \quad \bar{\varphi}=\iota\circ\varphi,\quad\bar{\psi}=\bar{\iota}_*\psi.
\]
Clearly, if $\psi$ is a spinor field along the map $\phi$, then $\bar{\psi}$ is a spinor field along the map $\bar{\phi}$.
Denote by $A$ the second fundamental of $\iota \mathpunct{:}N\hookrightarrow\bar{N}$, then the tension fields of $\varphi$ and $\bar{\varphi}$ are related by
\begin{equation}\label{eq:tau-relation}
  \tau(\bar{\varphi})=A(\nabla\varphi,\nabla\varphi)+\iota_*\bigl(\tau(\varphi)\bigr).
\end{equation}
If we denote $\bar{\D}$ to be the Dirac operator along the map $\bar{\phi}$, then
\begin{equation}\label{eq:bar-D}
  \bar{\D}\bar{\psi}=\bar{\iota}_*(\D\psi)+\bar{A}(d\phi(e_\alpha),e_\alpha\cdot\psi),
\end{equation}
where
\[
  \bar{A}(d\phi(e_\alpha),e_\alpha\cdot\psi)\mathpunct{:}=\nabla\phi^l\cdot\psi^j\otimes \bar{A}_{jl},\quad
  \bar{A}_{jl}\mathpunct{:}=\bar{A}(\partial_{y^j},\partial_{y^l}).
\]

Denote $\bar{R}$ and $R$ to be the pseudo-Riemannian curvature of $(\overline{\mathcal{N}},g_{\overline{\mathcal{N}}})$ and $(\mathcal{N},g_{\mathcal{N}})$, respectively. Define $\overline{\mathcal{R}}$ as the same as $\mathcal{R}$ in \eqref{eq:R-phi-psi}, except replacing $R$ with $\bar{R}$. By Gauss equation (see \cite{ONeill1983SemiRiemannian}*{Thm.~4.5, p.~100}) and the skew-adjointness relation of Clifford multiplication (see \cite{LawsonMichelsohn1989Spin}*{Chap.~I, Cor.~5.17}),
\begin{equation}\label{eq:bar-R}
  \mathcal{R}(\phi,\psi)=\overline{\mathcal{R}}(\phi,\psi)+\bar{P}(\bar{A}(d\phi(e_\alpha),e_\alpha\cdot\psi);\psi),
\end{equation}
where $\bar{P}(\bar{A}(d\phi(e_\alpha),e_\alpha\cdot\psi);\psi)$ is defined by the shape operator $\bar{P}$ (with abuse of notation) as follows:\[
  \bar{P}(\bar{A}(d\phi(e_\alpha),e_\alpha\cdot\psi);\psi)\mathpunct{:}=g_{\mathcal{N}}^{sk}\left\langle\bar{P}(\bar{A}_{jl};\partial_{y^i}), \partial_{y^k}\right\rangle_{T \mathcal{N}} \left\langle \psi^i,\nabla\phi^l\cdot\psi^j \right\rangle_{\Sigma M}\partial_{y^s}.
\]

Finally, we shall make a remark about the isometric embedding and the music isomorphism. Suppose $\bar{\iota} \mathpunct{:}(\mathcal{N}, g_{\mathcal{N}})\hookrightarrow(\overline{\mathcal{N}}, g_{\overline{\mathcal{N}}})$ is an isometric embedding between pseudo-Riemannian manifolds. For any 1-form $\bar{\omega}\in\Gamma(T^* \overline{\mathcal{N}})$, let $\omega=\bar{\iota}^*\bar{\omega}\in\Gamma(T^* \mathcal{N})$ be the pullback 1-form, and $\bar{\omega}^\sharp\in\Gamma(T\overline{\mathcal{N}})$, $\omega^\sharp\in\Gamma(T \mathcal{N})$ be the corresponding vector fields via music isomorphism. It is easy to show $w^\sharp=(\bar{\omega}^\sharp)^\top$, i.e., the tangential part of $\bar{\omega}^\sharp$ in $T \mathcal{N}$. Equivalently, the following diagram commutes:
\begin{equation}\label{eq:commute-diag}
  \begin{CD}
    \bar{\omega}\in\Gamma(T^*\overline{\mathcal{N}})@>\bar{\iota}^*>>\Gamma(T^*\mathcal{N})\ni\omega\\
    @V{}^\sharp VV@VV{}^\sharp V\\
    \bar{\omega}^\sharp\in\Gamma(T\overline{\mathcal{N}})@>\top>>\Gamma(T\mathcal{N})\ni\omega^\sharp.
  \end{CD}
\end{equation}
In fact, we will only need the Riemannian case of \eqref{eq:commute-diag} with $\iota \mathpunct{:}(N,g_{\mathcal{N}})\to(\mathbb{R}^K,g_0)$, where $g_0$ is the standard Euclidean metric over $\mathbb{R}^K$.

\section{The Euler--Lagrange equations}\label{sec:EL}

Following the scheme in \cite{Zhu2013Regularity}, instead of employing the Euler-Lagrange equation \eqref{eq:EL-smooth} directly, we need to separate the time and spacial components in the equation of $\phi$. We shall first compute the Euler--Lagrange equation of $\mathcal{L}(\phi,\psi)$ in the smooth category, which is the content of \autoref{subsec:EL-smooth}. Then, in \autoref{subsec:EL-extrinsic}, we employ the extrinsic point of view by embedding $N$ isometrically into $\mathbb{R}^K$, and rewrite the intrinsic equation into the extrinsic one, from which we can define weak solutions via integration by parts.

\subsection{The Euler--Lagrange equation in the smooth category}\label{subsec:EL-smooth}
The computation of Euler--Lagrange equation is kind of classical, see \cite{ChenJostLiWang2006Diracharmonic}*{Prop.~2.1}. However, in order to prove the regularity of weakly Dirac-harmonic maps into stationary Lorentzian manifolds, we need to write the equation of $\phi$ into equations of $\varphi^0$ and $\varphi$, i.e., separate the time and spacial components.

We begin by expressing our Lagrangian \eqref{eq:metric} in local coordinates. Suppose
\[
  (y^0,y')=(y^0,y^1,\ldots,y^n)
\]
are local coordinates on $\mathbb{R}^1\times N$. Locally, $\phi$ can be written as $\phi=(\varphi^0,\varphi)\in \mathbb{R}\times \mathbb{R}^n$ with $\varphi=(\varphi^1,\ldots,\varphi^n)$. Write $\vartheta=\sum_{i=1}^n\vartheta_idy^i$. It is easy to show,
\[
  d\phi(e_\alpha)\mathpunct{:}=\phi_*e_\alpha \mathpunct{:}=\phi^i_\alpha\partial_{y^i}=\varphi^0_\alpha\partial_{y^0}+\varphi^i_\alpha\partial_{y^i},
\]
and
\begin{align*}
  g_{\mathcal{N}}\left( d\phi(e_\alpha),d\phi(e_\alpha) \right)&=\sum_{i,j=0}^ng_{\mathcal{N};ij}\phi^i_\alpha\phi^j_\alpha\\
                                                               &=-\lambda(\varphi)\lvert dy^0(d\varphi^0(e_\alpha))+\vartheta(d\varphi(e_\alpha)) \rvert^2+g_N\left( d\varphi(e_\alpha),d\varphi(e_\alpha) \right),
\end{align*}
Therefore,
\begin{multline*}
  \mathcal{L}(\phi,\psi)=\frac{1}{2}\int_M-\lambda(\varphi)\lvert dy^0(d\varphi^0(e_\alpha))+\vartheta(d\varphi(e_\alpha)) \rvert^2+\left\langle d\varphi(e_\alpha),d\varphi(e_\alpha) \right\rangle_{TN}\\
  +\frac{1}{2}\int_M\left\langle \psi,\D\psi \right\rangle_{\Sigma M\otimes\phi^{-1}T \mathcal{N}}.
\end{multline*}

Now, we are ready to show the separated Euler--Lagrange equations for $\mathcal{L}$ over $\mathcal{X}(M,\mathcal{N})$. The computation is trivial but tedious, basically follows from a combination of \cite{ChenJostLiWang2006Diracharmonic}*{Prop.~2.1} and \cite{Zhu2013Regularity}*{Thm.~1.3}.
\begin{prop}\label{prop:EL}
  The Euler--Lagrange equations for $\mathcal{L}(\phi,\psi)$, $\phi=(\varphi^0,\varphi)$, $(\phi,\psi)\in \mathcal{X}(M,\mathcal{N})$, are
  \begin{align}
    \tau(\varphi)&=\mathcal{R}^\sharp(\phi,\psi)-\mathcal{H}^\sharp \label{eq:EL-phi},\\
    \div_M\left( V^\sharp\lambda(\varphi) \right)&=\mathcal{R}_0(\phi,\psi) \label{eq:EL-V},\\
    \D\psi&=0\label{eq:EL-psi},
  \end{align}
  where $\tau(\varphi)$ is the tension field of $\varphi \mathpunct{:}M\to N$, and
  \begin{equation}\label{eq:EL-notation}
    \begin{split}
      V^\sharp&=\left( dy^0(d\varphi^0(e_\alpha))+ \vartheta(\varphi)(d\varphi(e_\alpha)) \right)e_\alpha,\\
      \mathcal{H}^\sharp&=\sum_{j,k=1}^ng_{N}^{jk}\mathcal{H}_j\partial_{y^k}(\varphi),\\
      \mathcal{H}_j&=\frac{1}{2}\partial_j\lambda(\varphi)\lvert V^\sharp \rvert^2-\div_M(\lambda(\varphi)V^\sharp)\vartheta_j(\varphi)\\
                   &\qquad-\bigl( \partial_k\vartheta_j(\varphi)-\partial_j\vartheta_k(\varphi) \bigr)\left\langle \lambda(\varphi)V^\sharp,\nabla^M\varphi^k \right\rangle_{TM},\\
      \mathcal{R}_0(\phi,\psi)&\mathpunct{:}=\left\langle \mathcal{R}(\psi,\nabla\phi\cdot\psi),\partial_{y^0}(\phi) \right\rangle_{\phi^{-1}T \mathcal{N}}=\frac{1}{2}R_{ijl0}(\phi)\left\langle \psi^i,\nabla\phi^l\cdot\psi^j \right\rangle_{\Sigma M},\\
      \mathcal{R}^\sharp(\phi,\psi)&\mathpunct{:}=\frac{1}{2}\sum_{s,k=1}^ng_N^{ks}R_{ijlk}(\phi)\left\langle \psi^i,\nabla\phi^l\cdot\psi^j \right\rangle_{\Sigma M}\partial_{y^s},
    \end{split}
  \end{equation}
  $R$ is the pseudo-Riemannian curvature tensor of $(\mathcal{N}, g_{\mathcal{N}})$, and $g_N$ is the Riemannian metric of $N$.
\end{prop}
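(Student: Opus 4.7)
The plan is to derive the three equations by independent variations of $\mathcal{L}$ in $\psi$, in $\varphi^0$, and in $\varphi$, crucially exploiting the \emph{stationary structure}: $\lambda(\varphi)$, $\vartheta(\varphi)$, $g_{\mathcal{N}}$ and hence all its Christoffel symbols $\Gamma^i_{jk}$ depend only on the spatial part $\varphi$, never on the time coordinate $\varphi^0$. The spinor equation \eqref{eq:EL-psi} is immediate: for any test field $\xi\in\Gamma(\Sigma M\otimes\phi^{-1}T \mathcal{N})$, the variation $\psi_t=\psi+t\xi$ affects only the Dirac term; formal self-adjointness of $\D$ on the closed $M$ gives $\int_M\langle\xi,\D\psi\rangle_{\Sigma M\otimes\phi^{-1}T \mathcal{N}}=0$, and non-degeneracy of $g_{\mathcal{N}}$ combined with the arbitrariness of $\xi$ forces $\D\psi=0$.

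For \eqref{eq:EL-V}, take $\varphi^0\mapsto\varphi^0+t\eta$ with $\eta\in C^\infty(M)$, keeping $\varphi$ and the components $\psi^i$ fixed. Integration by parts in the energy term yields
\[
  \frac{d}{dt}\Big|_{t=0}\left(-\frac{1}{2}\int_M\lambda(\varphi)|V^\sharp|^2\right)=\int_M\div_M\bigl(\lambda(\varphi)V^\sharp\bigr)\,\eta.
\]
For the Dirac term, stationarity confines the $\varphi^0$-dependence to the single factor $d\phi^0$ inside $\Gamma^i_{0k}d\phi^0(e_\alpha)(e_\alpha\cdot\psi^k)\otimes\partial_{y^i}$; integration by parts, followed by the identity $g_{\mathcal{N};il}\Gamma^i_{0k}=\frac{1}{2}(\partial_k g_{\mathcal{N};l0}-\partial_l g_{\mathcal{N};0k})$ (valid since $\partial_0 g_{\mathcal{N}}=0$) and the definition of the pseudo-Riemannian curvature tensor, reorganizes the contribution into exactly $-\mathcal{R}_0(\phi,\psi)$. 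Arbitrariness of $\eta$ then produces \eqref{eq:EL-V}.

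For \eqref{eq:EL-phi}, vary $\varphi\mapsto\varphi+t\xi$ with $\xi$ compactly supported. Collecting the variations of $\lambda(\varphi)$ and of $\vartheta(\varphi)$ inside $V^\sharp$, and integrating by parts on the factor $\vartheta_i(\varphi)e_\alpha(\xi^i)$ (which is the source of $\div_M(\lambda V^\sharp)\vartheta_j$), the energy term $-\frac{1}{2}\int\lambda|V^\sharp|^2$ contributes the coefficient $-\mathcal{H}_j\xi^j$; the antisymmetric combination $\partial_k\vartheta_j-\partial_j\vartheta_k$ emerges by re-indexing the two cross terms arising from the $\lambda(\varphi)$ and $\vartheta(\varphi)$ variations. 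The purely spatial energy $\frac{1}{2}\int|d\varphi|^2_{g_N}$ contributes $-\tau(\varphi)^k\xi^k$. The Dirac term, whose $\Gamma^i_{jk}(\phi)$ now do vary with $\varphi$, contributes the $\partial_{y^k}$-components of $\mathcal{R}(\phi,\psi)$ as in \cite{ChenJostLiWang2006Diracharmonic}*{Prop.~2.1}; raising the free index with $g_N^{jk}$, which coincides with $g_{\mathcal{N}}^{jk}$ on the purely spatial block by the block form of \eqref{eq:metric}, yields $\mathcal{R}^\sharp$ and gives \eqref{eq:EL-phi}.

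The main obstacle is the algebraic bookkeeping in the $\varphi$-variation: the contributions from the Lorentzian Christoffel symbols, the derivatives of $\lambda$ and $\vartheta$, and the integration-by-parts boundary terms must reorganize so that $\div_M(\lambda V^\sharp)$ appears precisely as the coefficient of $\vartheta_j$ in $\mathcal{H}_j$, while the antisymmetric part $\partial_k\vartheta_j-\partial_j\vartheta_k$ gathers the remaining cross terms between time and space. This is tedious but routine; it extends \cite{Zhu2013Regularity}*{Thm.~1.3} by insertion of the Dirac bilinear in exactly the pattern of \cite{ChenJostLiWang2006Diracharmonic}*{Prop.~2.1}, so no new idea is needed beyond combining these two computations.
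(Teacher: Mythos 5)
Your overall strategy — vary $\psi$ first to obtain $\D\psi=0$, then vary the map, separating time and space directions and exploiting that $g_{\mathcal{N}}$, $\lambda$, $\vartheta$ are independent of $\varphi^0$ — is exactly the paper's approach; the paper merely packages the two map variations into a single variation $\xi=\xi^0+\xi'$ and reads off the coefficients of $\zeta^0$ and $\zeta^j$ at the end, which is equivalent to your two separate variations.

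One place where your sketch is compressed in a way that matters: for the $\varphi^0$-variation of the Dirac term, after integrating by parts on $d\eta$ the derivative also lands on $\langle\psi^l,e_\alpha\cdot\psi^k\rangle_{\Sigma M}$, producing $\nabla^{\Sigma M}\psi$ terms. The Christoffel identity
\[
  g_{\mathcal{N};il}\Gamma^i_{0k}=\tfrac{1}{2}\bigl(\partial_k g_{\mathcal{N};l0}-\partial_l g_{\mathcal{N};0k}\bigr)
\]
handles only the algebraic part; to turn the $\nabla\psi$ terms into the quadratic-in-$\Gamma$ part of the curvature you must substitute back the spinor equation $\p\psi^i=-\Gamma^i_{jk}d\phi^j\cdot\psi^k$, i.e.\ use $\D\psi=0$ a second time. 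The paper makes this use explicit by passing through the identity $\tfrac{D}{dt}\D\psi=\D\bigl(\psi^i\otimes(\nabla_{\partial\phi_t/\partial t}\partial_{y^i})\circ\phi_t\bigr)+e_\alpha\cdot\psi^i\otimes R\bigl(\tfrac{\partial\phi_t}{\partial t},\phi_{t*}e_\alpha\bigr)\partial_{y^i}\circ\phi_t$ and invoking self-adjointness together with $\D\psi=0$ to kill the first summand. Your route (coordinate computation) reaches the same $-\mathcal{R}_0$, but only if you flag this second use of $\D\psi=0$; as written it looks as though integration by parts and the metric identity alone suffice, which they do not. Apart from that omission, the energy-term computations you outline for both $\mathrm{I}$ and the spatial Dirichlet term agree with the paper.
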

\begin{proof}
  Take a local orthonormal frame $\left\{ e_\alpha \right\}$  with $\nabla_{e_\alpha}e_\beta=0$ at $x\in M$, and note that $\nabla_{\frac{\partial}{\partial t}}\frac{\partial}{\partial t}=\nabla_{\frac{\partial}{\partial t}}e_\alpha=\nabla_{e_\alpha}\frac{\partial}{\partial t}=0$ locally. Suppose $\left\{ \psi_t \right\}$ is a family of variation with $d\psi_t/dt=\eta\in\Gamma(\Sigma M\otimes\phi^{-1}T \mathcal{N})$ at $t=0$ and $\phi$ is fixed, then
  \begin{align*}
    \left. \frac{d\mathcal{L}(\phi,\psi_t)}{dt} \right\rvert_{t=0}&=\int_M\left\langle \eta,\D\psi \right\rangle_{\Sigma M\otimes\phi^{-1}T \mathcal{N}}+\left\langle \psi,\D\eta \right\rangle_{\Sigma M\otimes\phi^{-1}T \mathcal{N}}\\
                                                                  &=2\int_M\left\langle \eta,\D\psi \right\rangle_{\Sigma M\otimes\phi^{-1}T \mathcal{N}},
  \end{align*}
  by the self-adjoint property of $\D$. Therefore, by the non-degenerateness of $g_{\mathcal{N}}$, we obtain \eqref{eq:EL-psi}.

  Next, we consider a variation $\left\{ \phi_t \right\}$ of $\phi$ such that $d\phi_t/dt=\xi=\xi^0+\xi'$ at $t=0$, $\xi^0(\phi)=\zeta^0\partial_{y^0}\circ\varphi^0$, $\xi'(\phi)=\sum_{j=1}^n\zeta^j\partial_{y^j}\circ\varphi$, and the coefficients $\psi^j$ in $\psi_t=\sum_{j=0}^n\psi^j\otimes\partial_{y^j}(\phi_t)$ are independent of $t$. It is easy to show,
  \begin{align*}
    \left. \frac{d\mathcal{L}(\phi_t,\psi_t)}{dt} \right\rvert_{t=0}&=-\frac{1}{2}\int_M \left. \frac{d}{dt} \right\rvert_{t=0}\bigl\{\lambda(\varphi_t)\lvert dy^0(d\varphi_{t}^0(e_\alpha))+\vartheta(d\varphi_{t}(e_\alpha)) \rvert^2\bigr\}\\
                                                                    &\qquad+\frac{1}{2}\int_M \left. \frac{d}{dt} \right\rvert_{t=0}\lvert \nabla\varphi \rvert^2+\frac{1}{2}\int_M\left. \frac{d}{dt} \right\rvert_{t=0}\left\langle \psi_t,\D\psi_t \right\rangle_{\Sigma M\otimes\phi_t^{-1}T \mathcal{N}}\\
                                                                    \mathpunct{:}=\I+\II+\III.
  \end{align*}
  The processing of $\II$ and $\III$ are similar to \cite{ChenJostLiWang2006Diracharmonic}*{Prop.~2.1}, while $\I$ needs to be handled carefully. In fact, $\II$ is the variation of classical Dirichlet energy of harmonic maps (into $N$), which is given by
  \[
    \II=\int_M\left. \left\langle d\varphi_t(e_\alpha),\nabla_{\frac{\partial\varphi_t}{\partial t}}d\varphi_t(e_\alpha) \right\rangle_{TN}\right\rvert_{t=0}
    =-\int_M\left\langle \tau(\varphi),\xi' \right\rangle_{TN},
  \]
  where $\tau(\varphi)$ is the tension field of $\varphi \mathpunct{:}M\to N$, which is defined as the trace of $\nabla^{\varphi^{-1}TN} d\varphi$, i.e.,
  \[
    \tau(\varphi)=\nabla^N_{d\varphi(e_\alpha)}d\varphi(e_\alpha)-d\varphi\left( \nabla^M_{e_i}e_i \right).
  \]

  For $\III$, we note first that, by \eqref{eq:EL-psi},
  \[
    \III\mathpunct{:}=\frac{1}{2}\int_M \left. \frac{d}{dt} \right\rvert_{t=0}\left\langle \psi,\D\psi \right\rangle_{\Sigma M\otimes\phi^{-1}T \mathcal{N}}
    =\frac{1}{2}\int_M\left\langle \psi,\left. \frac{D}{dt} \right\rvert_{t=0}\D\psi \right\rangle_{\Sigma M\otimes \phi^{-1}T \mathcal{N}}.
  \]
  Note that $[\phi_{t*}e_\alpha,\phi_{t*}\partial_t]=\phi_{t*}[e_\alpha,\partial_t]=0$, we have
  \[
    \frac{D}{dt}\D\psi=\D\left( \psi^i\otimes\left( \nabla_{\frac{\partial\phi_t}{\partial t}}\partial_{y^i} \right)\circ\phi_t \right)+e_\alpha\cdot\psi^i\otimes R\left( \frac{\partial\phi_t}{\partial t},\phi_{t*}e_\alpha \right)\partial_{y^i}\circ\phi_t,
  \]
  where $R$ is the pseudo-Riemannian curvature operator of $(\mathcal{N}, g_{\mathcal{N}})$. On account of the above formula of $\frac{D}{dt}\D\psi$, and by the self-adjoint of $\D$, apply \eqref{eq:EL-psi} again, we see that
  \[
    \III =\frac{1}{2}\int_M\left\langle \psi, e_\alpha\cdot\psi^j\otimes R\left( \xi,\phi_*e_\alpha \right)\partial_{y^j}\circ\phi \right\rangle_{\Sigma M\otimes \phi^{-1}T \mathcal{N}}.
  \]
  Now, since $\xi=\sum_{k=0}^n\zeta^k\partial_{y^k}\circ\phi$, $\phi_*e_\alpha=\phi_\alpha^l\partial_{y^l}$, we have,
  \begin{align*}
    \III&=\frac{1}{2}\int_M\left\langle \psi^i\otimes\partial_{y^i}\circ\phi,e_\alpha\cdot\psi^j\otimes\left( \zeta^k\phi^l_\alpha R^s_{klj}\partial_{y^s}\circ\phi \right) \right\rangle_{\Sigma M\otimes \phi^{-1}T \mathcal{N}}\\
        &=-\frac{1}{2}\int_{M}\left\langle \psi^i,\nabla\phi^l\cdot\psi^j \right\rangle_{\Sigma M}\zeta^0(\varphi^0)R_{ij0l}(\phi) -\frac{1}{2}\int_{M}\sum_{k=1}^n\left\langle \psi^i,\nabla\phi^l\cdot\psi^j \right\rangle_{\Sigma M}\zeta^k(\varphi)R_{ijkl}(\phi)\\
        &=\int_M\zeta^0(\varphi^0)\mathcal{R}_0(\psi,\nabla\phi\cdot\psi)+\left\langle \mathcal{R}^\sharp(\psi,\nabla\phi\cdot\psi),\xi' \right\rangle_{TN},
  \end{align*}
  where $\nabla\phi^l=\phi^l_\alpha e_\alpha$, and $\mathcal{R}_0(\psi,\nabla\phi\cdot\psi)$,  $\mathcal{R}^\sharp(\psi,\nabla\phi\cdot\psi)$ are given in \eqref{eq:EL-notation}. Note the degenerateness of $g_\mathcal{N}$ when restricted to $TN$, the orthogonal decomposition $T\mathcal{N}=TN\oplus T^\perp N$ as in Riemannian case not holds anymore, see e.g., \cite{ONeill1983SemiRiemannian}*{Lem.~2.23, p.~49}.

  To compute $\I$, we set $V=V_0$ and
  \[
    V_t(e_\alpha)\mathpunct{:}=\left\langle V^\sharp_t,e_\alpha \right\rangle=dy^0(\varphi_{t*}^0e_\alpha)+ \vartheta\bigl(\varphi_{t*}e_\alpha\bigr),
  \]
  then
  \begin{align*}
    \I&\mathpunct{:}=-\frac{1}{2}\int_M \left. \frac{d}{dt} \right\rvert_{t=0}\bigl\{\lambda(\varphi_t)\lvert dy^0(\varphi_{t*}^0e_\alpha)+\vartheta\left(\varphi_{t*}e_\alpha\right) \rvert^2\bigr\}\\
      &=-\frac{1}{2}\int_M d_N\lambda(\varphi)(\xi')\lvert V(e_\alpha) \rvert^2-\int_M\lambda(\varphi)V(e_\alpha)\left. \frac{d}{dt} \right\rvert_{t=0}V_t(e_\alpha).
  \end{align*}
  Note that $d_N\lambda(\varphi)(\xi')=(\partial_j\lambda\zeta^j)\circ\varphi$. A direct computation shows,
  \begin{align*}
    \lambda(\varphi)V(e_\alpha)\left. \frac{d}{dt} \right\rvert_{t=0}\left[ dy^0\left(\varphi^0_{t*}(e_\alpha)\right) \right]
    &=\left\langle \nabla^M(\zeta^0(\varphi^0)),\lambda(\varphi)V(e_\alpha)e_\alpha \right\rangle_{TM},\\
    \lambda(\varphi)V(e_\alpha)\left. \frac{d}{dt} \right\rvert_{t=0}\left[\vartheta(\varphi_{t*}e_\alpha)\right]
    &=\left\langle \lambda(\varphi)V^\sharp,\left( \partial_j\vartheta_i(\varphi)\zeta^j(\varphi)+\vartheta_j(\varphi)\partial_i\zeta^j(\varphi) \right)\nabla^M\varphi^i \right\rangle_{TM}.
  \end{align*}
  Now, integration by parts gives,
  \begin{align*}
    \I &=-\int_M \zeta^0(\varphi^0)\div (\lambda(\varphi)V^\sharp)\\
       &\qquad-\int_M \left( \frac{1}{2}\partial_j\lambda(\varphi)\lvert V \rvert^2-\div_M(\lambda(\varphi)V^\sharp)\vartheta_j(\varphi) \right)\zeta^j(\varphi)\\
       &\qquad-\int_M\bigl( \partial_j\vartheta_i(\varphi)-\partial_i\vartheta_j(\varphi) \bigr)\left\langle \lambda(\varphi)V^\sharp,\nabla^M\varphi^i \right\rangle_{TM} \zeta^j(\varphi)\\
       &=-\int_M \zeta^0(\varphi^0)\div (\lambda(\varphi)V^\sharp)-\int_M\left\langle \mathcal{H}^\sharp,\xi' \right\rangle_{TN},
  \end{align*}
  where $\mathcal{H}^\sharp$ is given in \eqref{eq:EL-notation}.

  In conclusion, we obtain
  \begin{align*}
    \left. \frac{d\mathcal{L}(\phi_t,\psi)}{dt} \right\rvert_{t=0}
    &=-\int_M \zeta^0(\varphi^0)\left( \div_M\left( V^\sharp\lambda(\varphi) \right)-\mathcal{R}_0(\phi,\psi) \right) \\
    &\qquad-\int_M\left\langle \tau(\varphi)+\mathcal{H}^\sharp-\mathcal{R}^\sharp(\phi,\psi),\xi' \right\rangle_{TN},
  \end{align*}
  from which we deduce the equations \eqref{eq:EL-phi}--\eqref{eq:EL-V}.
\end{proof}
\subsection{The weak Dirac-harmonic map equation}\label{subsec:EL-extrinsic}
In what follows, we will consider the isometric embedding $\iota\mathpunct{:}N\to \overline{N}=\mathbb{R}^K$, and transform the Euler-Lagrange equations
\eqref{eq:EL-phi}--\eqref{eq:EL-psi} into extrinsic view, from which we can define the weak sense of Dirac-harmonic equation.

Denote $\bar{\iota}=\mathrm{id}\times\iota: \mathcal{N}\to \mathbb{R}\times \mathbb{R}^K=\mathpunct{:}\overline{\mathcal{N}}$, and recall that we extended $\lambda$, $\vartheta$ to $\overline{N}$ via the nearest projection and cut-off function (see \autoref{prop:iso-embedding}), then we can write $\vartheta$ as $\vartheta=\left( \vartheta_1,\ldots,\vartheta_K \right)\in \mathbb{R}^K$ and $\varphi$ as $\varphi=\left( \varphi^1,\ldots,\varphi^K \right)$, where $\left\{ \vartheta_i \right\}_{i=1}^K$, $\lambda$ are $C^2$ functions on $\overline{N}$ and $\left\{ \varphi^i \right\}_{i=1}^K$ are $W^{1,2}$ functions on $M$.
Locally, if $\left\{ \partial_{v^a} \right\}_{a=0}^K$ is a natural basis of $\overline{\mathcal{N}}$ with $\partial_{v^0}=\partial_{y^0}$ to be a basis of $\mathbb{R}^1$, then $\bar{\D}$ can be expressed by the usual Dirac operator as follows
\begin{equation}\label{eq:bar-Dirac}
  \bar{\D}\bar{\psi}=\p\bar{\psi}+\bar{\Gamma}(d\bar{\phi}(e_\alpha),e_\alpha\cdot\bar{\psi}),
\end{equation}
where
\begin{equation}\label{eq:bar-Gamma}
  \bar{\Gamma}(d\bar{\phi}(e_\alpha),e_\alpha\cdot\bar{\psi})\mathpunct{:}=\nabla\bar{\phi}^a\cdot\bar{\psi}^b\bar{\Gamma}_{ab}^c(\bar{\phi})\otimes\partial_{v^c}\circ\bar{\phi},
\end{equation}
and $\left\{ \bar{\Gamma}_{ab}^c \right\}_{a,b,c=0}^K$ are the Christoffel symbols of $(\overline{\mathcal{N}},g_{\overline{\mathcal{N}}})$.
Thus, by \eqref{eq:bar-D}, the Dirac equation in the Euler--Lagrange equation \eqref{eq:EL-psi} is transformed to
\begin{equation}\label{eq:psi-extrinsic}
  \p\bar{\psi}=\bar{A}(d\phi(e_\alpha),e_\alpha\cdot\psi)-\bar{\Gamma}(d\bar{\phi}(e_\alpha),e_\alpha\cdot\bar{\psi}).
\end{equation}
In local coordinates, if we denote $B=(B^a_i)_{K\times n}$ to be the matrix with $B^a_{i}=\partial\iota^a/\partial y^i$, and $\bar{B}=(\bar{B}_i^a)_{(K+1)\times(n+1)}$, $\bar{B}_i^a=\partial\bar{\iota}^a/\partial y^i$, then
\[
  \bar{B}_i^a=\begin{cases}
    1,&i=0=a,\\
    0,&i=0,a\neq0\text{ or } i\neq0,a=0,\\
    B_i^a,&i\neq0, a\neq0,
  \end{cases}
\]
and
\[
  \begin{alignedat}{3}
    \bar{\psi}^a&=\bar{B}^a_{j}\psi^j,&\quad \partial_{y^i}&=\bar{B}^a_{i}\partial_{v^a},&\quad dv^a&=\bar{B}_i^a dy^i,\\
    \nabla\bar{\phi}^a&=\nabla\phi^i \bar{B}^a_{i},&\quad\vartheta_j&=\sum_{a=1}^K\vartheta_aB_j^a,&\quad
    \bar{A}_{ij}&\mathpunct{:}=\bar{A}(\partial_i,\partial_j)=\bar{B}_i^a\bar{B}_j^b\bar{A}_{ab}.
  \end{alignedat}
\]
It is easy to show,
\[
  \bar{A}(d\phi(e_\alpha),e_\alpha\cdot\psi)=\nabla\phi^i\cdot\psi^j\otimes\bar{A}_{ij}(\phi)
  =\nabla\bar{\phi}^a\cdot\bar{\psi}^b\otimes\bar{A}_{ab}(\bar{\phi})=\mathpunct{:}\bar{A}\bigl(d\bar{\phi}(e_\alpha),e_\alpha\cdot\bar{\psi}\bigr),
\]
where $\bar{A}_{ab}$ is a normal vector field along $\mathcal{N}$ defined as follows
\[
  \bar{A}_{ab}\mathpunct{:}=-\left\langle \nabla^{\overline{\mathcal{N}}}_{\partial_{v^a}}\bar{\nu}_l,\partial_{v^b} \right\rangle_{\overline{\mathcal{N}}}\bar{\nu}_l,
\]
here, $\left\{ \bar{\nu}_l \right\}_{l=n+1}^K$ is a local orthonormal frame of $T^\perp \mathcal{N}\subset T\overline{\mathcal{N}}$. In fact, if we write $\bar{\nu}_l \mathpunct{:}=\bar{v}_l^a\partial_{v^a}$, then by the compatibility of pull-back connection, we know that
\[
  \bar{A}_{ij}=\bar{A}\left(\partial_{y^i},\partial_{y^j}\right)
  =-\left\langle \nabla_{\partial_{y^i}}^{\bar{\iota}^{-1}T\overline{\mathcal{N}}}\bar{\nu}_l,d\bar{\iota}(\partial_{y^j}) \right\rangle_{\overline{\mathcal{N}}}\nu_l
  =\bar{B}_i^a\bar{B}_j^b\bar{A}_{ab},
\]
and
\begin{align*}
  \bar{A}(d\phi(e_\alpha),e_\alpha\cdot\psi)&=\nabla\phi^i\cdot\psi^j\otimes\bar{A}(\partial_{y_i},\partial_{y^j})\\
                                            &=\bar{B}_i^a\nabla\phi^i\cdot\bar{B}_j^b\psi^j\otimes\bar{A}_{ab}(\bar{\phi})\\
                                            &=\nabla\bar{\phi}^a\cdot\bar{\psi}^b\otimes\bar{A}_{ab}(\bar{\phi}).
\end{align*}
In components, we can write \eqref{eq:psi-extrinsic} as
\begin{equation}\label{eq:psi-extrinsic-compo}
  \p\bar{\psi}^c=\left( \bar{A}_{ab}^c(\bar{\phi})-\bar{\Gamma}_{ab}^c(\bar{\phi}) \right)\nabla\bar{\phi}^a\cdot\bar{\psi}^b,\quad \bar{A}_{ab}(\bar{\phi})=\bar{A}_{ab}^c(\bar{\phi})\partial_{v^c}.
\end{equation}

In order to show the boundedness of second fundamental form, we note first that
\begin{equation}\label{eq:bdd-A}
  \bar{A}_{ij}
  \mathpunct{:}=-\left\langle \nabla_{\partial_{y^i}}^{\bar{\iota}^{-1}T\overline{\mathcal{N}}} \bar{\nu}_l,d\bar{\iota}(\partial_{y^j}) \right\rangle_{\overline{\mathcal{N}}} \bar{\nu}_l
  =-\left( \bar{B}_j^d\frac{\partial \bar{v}_l^c}{\partial y^i}+\bar{B}_i^a \bar{B}_j^d \bar{v}_l^b\bar{\Gamma}_{ab}^c \right)g_{\overline{\mathcal{N}};cd}\bar{\nu}_l.
\end{equation}
Noting that $\bar{\nu}_l\in T^{\perp}\mathcal{N}$ is orthonormal, we know that $\delta_{kl}=g_{\overline{\mathcal{N}}}\left( \bar{\nu}_k,\bar{\nu_l} \right)=g_{\mathbb{R}^K}\left( \bar{\nu}_k,\bar{\nu}_l \right)=\bar{v}_k^a\bar{v}_l^b\delta_{ab}=\bar{v}_k^a\bar{v}_l^b$. In particular, we see that for any $l$ and any $a$, $|\bar{v}_l^a|\leq 1$. Moreover, by the construction of the metric of $\overline{\mathcal{N}}$, we can take $\bar{\nu}_l(t,y)$ to be independent of time $t$, i.e., it depends only on $y\in N$, which implies that $\lvert \partial \bar{v}_l^c/\partial y^i \rvert$ is bounded as well. Finally, since $N$ is compact, we know that $\iota$ and its derivatives are bounded. This shows that $\lvert \bar{A}_{ij} \rvert$ is bounded.

It is also easy to rewrite the equation of map $\bar{\phi}=(\varphi^0,\varphi^1,\ldots,\varphi^K)$. Firstly, by \eqref{eq:bar-R},
\begin{equation}\label{eq:bar-phi-extrinsic}
  -\div \left(\lambda(\varphi)\left( \nabla\varphi^0+\vartheta_a\nabla\varphi^a \right)\right)=\overline{\mathcal{R}}_0(\bar{\phi},\bar{\psi})+\overline{P}_0(\bar{\phi},\bar{\psi}),
\end{equation}
where
\begin{align*}
  \overline{\mathcal{R}}_0(\bar{\phi},\bar{\psi})&\mathpunct{:}=\left\langle \overline{\mathcal{R}}(\bar{\phi},\bar{\psi}),\partial_{v^0} \right\rangle_{\bar{\phi}^{-1}T\overline{\mathcal{N}}}=\frac{1}{2}\bar{R}_{abc0}(\bar{\phi})\left\langle \bar{\psi}^a,\nabla\bar{\phi}^c\cdot\bar{\psi}^b \right\rangle_{\Sigma M},\\
  \overline{P}_0(\bar{\phi},\bar{\psi})&\mathpunct{:}=\left\langle \overline{P}(\bar{A}(d\bar{\phi}(e_\alpha),e_\alpha\cdot\bar{\psi});\bar{\psi}),\partial_{v^0} \right\rangle_{\bar{\phi}^{-1}T\overline{\mathcal{N}}}\\
                                       &=\left\langle \bar{A}_{bc},\bar{A}_{a0} \right\rangle_{\bar{\phi}^{-1}T\overline{\mathcal{N}}}\left\langle \bar{\psi}^a,\nabla\bar{\phi}^c\cdot\bar{\psi}^b \right\rangle_{\Sigma M}.
\end{align*}

To rewrite the equation of $\varphi$, we need the Gauss equation of semi-Riemannian geometry (see \cite{ONeill1983SemiRiemannian}*{p.~100, Thm.4.5}), i.e.,
\[
  \bar{R}_{ijkl}=R_{ijkl}+\left\langle \bar{A}_{ik},\bar{A}_{jl} \right\rangle_{T\overline{\mathcal{N}}}-\left\langle \bar{A}_{il},\bar{A}_{jk} \right\rangle_{T\overline{\mathcal{N}}},
\]
and the skew adjointness of Clifford multiplication, i.e.,
\[
  \left\langle \bar{\psi}^i,\nabla\bar{\phi}^l\cdot\bar{\psi}^j \right\rangle_{\Sigma M}=-\left\langle \nabla\bar{\phi}^l\cdot\bar{\psi}^j,\bar{\psi}^i \right\rangle_{\Sigma M},
\]
where $\left\langle \cdot,\cdot \right\rangle_{\Sigma M}$ is the Riemannian metric of $\Sigma M$. By \eqref{eq:commute-diag}, the above two relation implies that
\[
  \mathcal{R}^\sharp(\phi,\psi)=\overline{\mathcal{R}}^{\sharp\top}(\bar{\phi},\bar{\psi})+\overline{P}^{\sharp\top}(\bar{\phi},\bar{\psi}),
\]
where
\begin{align*}
  \overline{\mathcal{R}}^\sharp(\bar{\phi},\bar{\psi})&\mathpunct{:}=\frac{1}{2}\sum_{d=1}^K\bar{R}_{abcd}\left\langle \bar{\psi}^a,\nabla\bar{\phi}^c\cdot\bar{\psi}^b \right\rangle_{\Sigma M}\partial_{v^d}\circ\bar{\phi},\\
  \overline{P}^\sharp(\bar{\phi},\bar{\psi})&\mathpunct{:}=\sum_{d=1}^K\left\langle \bar{A}_{ad},\bar{A}_{bc} \right\rangle_{T\overline{\mathcal{N}}}\left\langle \bar{\psi}^a,\nabla\bar{\phi}^c\cdot\bar{\psi}^b \right\rangle_{\Sigma M}\partial_{v^d}\circ\bar{\phi}.
\end{align*}
It is also easy to show
\[
  \mathcal{H}^\sharp=\overline{\mathcal{H}}^{\sharp\top}, \quad\overline{\mathcal{H}}^\sharp \mathpunct{:}=\sum_{a=1}^K\mathcal{H}_a\partial_{v^a}\circ\varphi,
\]
where
\[
  \mathcal{H}_a=\frac{1}{2}\partial_a\lambda(\varphi)\left\lvert \overline{V}^\sharp \right\rvert^2+\mathrm{div}\left(\lambda(\varphi)\overline{V}^\sharp\right)\vartheta_a -\left( \partial_b\vartheta_a-\partial_a\vartheta_b \right)\left\langle \lambda(\varphi)\overline{V}^\sharp,\nabla\varphi^b \right\rangle_{TM},
\]
and
\[
  V^\sharp\mathpunct{:}=\nabla\varphi^0+ \sum_{a=1}^K\vartheta_a\nabla\varphi^a.
\]
Finally, by \eqref{eq:tau-relation} and $\bar{N}=\mathbb{R}^K$,
\[
  \iota_*\left( \tau(\varphi) \right)=\tau(\bar{\varphi})-A(d\varphi(e_\alpha),d\varphi(e_\alpha))=\Delta_M\bar{\varphi}-A(\nabla\varphi,\nabla\varphi).
\]
We conclude the above discussion into the following proposition.
\begin{prop}\label{prop:EL-extrinsic}
  The Euler--Lagrange equation of $\mathcal{L}$ can be rewritten as follows (under the identification of $\bar{\varphi}=\iota\circ\varphi$ with $\varphi$, $\bar{\phi}$ with $\phi$ and $\bar{\psi}$ with $\psi$)
  \begin{equation}\label{eq:weak-Dirac-Harmonic}
    \begin{split}
      -\div \nabla\varphi&=A(\nabla\varphi,\nabla\varphi)+\overline{\mathcal{R}}^{\sharp\top}(\phi,\psi)+\overline{P}^{\sharp\top}(\phi,\psi)-\overline{\mathcal{H}}^{\sharp\top},\\
      -\div (\lambda(\varphi)V^\sharp)&=\overline{\mathcal{R}}_0(\phi,\psi)+\overline{P}_0(\phi,\psi),\\
      \p\psi&=\bar{A}(d\phi(e_\alpha),e_\alpha\cdot\psi)-\bar{\Gamma}(d\phi(e_\alpha),e_\alpha\cdot\psi),
    \end{split}
  \end{equation}
  where $\top$ is the tangential part of the Riemannian orthogonal decomposition $T \mathbb{R}^K=TN\oplus T^\perp N$.
\end{prop}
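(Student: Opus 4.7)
The plan is to push the three intrinsic Euler--Lagrange equations of \autoref{prop:EL} forward along the isometric embedding $\bar\iota=\mathrm{id}\times\iota \mathpunct{:} \mathcal{N} \hookrightarrow \overline{\mathcal{N}} = \mathbb{R}^1\times\mathbb{R}^K$, turning each intrinsic expression into an extrinsic one involving only the ambient derivatives of the components of $\bar\phi=(\varphi^0,\varphi^1,\ldots,\varphi^K)$. The four ingredients are all at hand: the tension-field comparison \eqref{eq:tau-relation}, the Dirac-operator decompositions \eqref{eq:bar-D} and \eqref{eq:bar-Dirac}, the Gauss equation plus the skew-adjointness of Clifford multiplication (which together yield \eqref{eq:bar-R}), and the commutative diagram \eqref{eq:commute-diag} relating the musical isomorphism before and after embedding.

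The spinor line is immediate: applying $\bar\iota_*$ to $\D\psi=0$ and substituting \eqref{eq:bar-D} gives $\bar{\D}\bar\psi=\bar A(d\phi(e_\alpha),e_\alpha\cdot\psi)$, after which \eqref{eq:bar-Dirac} rewrites $\bar{\D}\bar\psi$ as $\p\bar\psi+\bar\Gamma(d\phi(e_\alpha),e_\alpha\cdot\psi)$, producing the third line of \eqref{eq:weak-Dirac-Harmonic}. For the scalar ``time'' equation \eqref{eq:EL-V}, the cut-off extension of $\lambda$ and $\vartheta$ from $N$ to $\overline{N}$ provided by \autoref{prop:iso-embedding} makes $V^\sharp=\nabla\varphi^0+\sum_a\vartheta_a(\varphi)\nabla\varphi^a$ a well-defined object on $M$ using only extrinsic coordinates; the Gauss equation \eqref{eq:bar-R} along the distinguished direction $\partial_{y^0}$ then replaces the intrinsic $\mathcal{R}_0$ by $\overline{\mathcal{R}}_0+\overline{P}_0$, yielding the middle line of \eqref{eq:weak-Dirac-Harmonic}.

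The main work — and the only nontrivial step — is the spatial map equation. Applying $\iota_*$ to \eqref{eq:EL-phi} and using \eqref{eq:tau-relation} together with the vanishing Christoffel symbols of the flat target $\overline{N}=\mathbb{R}^K$ rewrites the left-hand side as $\Delta_M\bar\varphi-A(\nabla\varphi,\nabla\varphi)$; on the right-hand side, the Gauss equation combined with the skew-adjointness of Clifford multiplication converts $\iota_*\mathcal{R}^\sharp$ into $\overline{\mathcal{R}}^{\sharp\top}+\overline{P}^{\sharp\top}$, and a direct calculation gives $\iota_*\mathcal{H}^\sharp=\overline{\mathcal{H}}^{\sharp\top}$. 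The step one must handle carefully — the place where it would be easy to make a sign or projection mistake — is the passage from an \emph{intrinsic} musical isomorphism (raising an index with $g_{\mathcal{N}}$) to an \emph{extrinsic} one followed by tangential projection to $T\mathcal{N}$; this is exactly the content of the commutative diagram \eqref{eq:commute-diag}, which guarantees that $(\bar\omega^\sharp)^\top=(\bar\iota^*\bar\omega)^\sharp$. Once this identification is in place, assembling the three derived identities produces the system \eqref{eq:weak-Dirac-Harmonic}.
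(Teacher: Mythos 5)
Your proposal follows the same route as the paper's own derivation in \autoref{subsec:EL-extrinsic}: push \eqref{eq:EL-phi}--\eqref{eq:EL-psi} forward along $\bar\iota$, use \eqref{eq:tau-relation} for the tension field, \eqref{eq:bar-D} and \eqref{eq:bar-Dirac} for the spinor line, \eqref{eq:bar-R} (Gauss plus skew-adjointness of Clifford multiplication) for the curvature terms, and the commutative diagram \eqref{eq:commute-diag} to justify that the intrinsic musical isomorphism becomes extrinsic sharp followed by tangential projection. This is essentially the paper's argument, and correctly identified as such.
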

We have the following equivalent definition of weakly Dirac-harmonic maps. Recall that $\mathcal{X}^w$ is the admissible space of weakly Dirac-harmonic maps defined in \eqref{eq:weak-space}.
\begin{prop}\label{defn:weak-dirac-hm}
  A pair $(\phi,\psi)\in \mathcal{X}^w(M,\mathcal{N})$ is a \emph{weakly Dirac-harmonic map} from $M$ to $\mathcal{N}\mathpunct{:}=\mathbb{R}^1\times N$ if and only if for any $\eta^0\in C^\infty(M,\mathbb{R})$, $\eta\in C^\infty(M,\mathbb{R}^K)$ and any $\xi\in C^\infty\bigl(M,\Sigma M\otimes \mathbb{R}^{K+1})\bigr)$,
  \begin{align*}
    \int_M\left\langle \nabla\varphi,\nabla\eta \right\rangle-\left\langle A\left( \nabla\varphi,\nabla\varphi \right)+\overline{\mathcal{R}}^{\sharp\top}(\psi,\psi)-\overline{\mathcal{P}}^{\sharp\top}(\psi,\psi)-\overline{\mathcal{H}}^{\sharp\top},\eta \right\rangle&=0,\\
    \int_M\left\langle (\nabla\varphi^0+\vartheta_a\nabla\varphi^a)\lambda(\varphi),\nabla\eta^0 \right\rangle-\left( \overline{\mathcal{R}}_0(\phi,\psi)+\overline{P}_0(\phi,\psi) \right)\eta^0&=0,\\
    \int_M\left\langle \psi,\p\xi \right\rangle-\left\langle \bar{A}\left(d\phi(e_\alpha),e_\alpha\cdot\psi\right)-\bar{\Gamma}\left(d\phi(e_\alpha),e_\alpha\cdot\psi\right),\xi \right\rangle&=0,
  \end{align*}
  where the metric on $TM\otimes \mathbb{R}^K$ is the standard product metric of $g_M$ and Euclidean metric on $\mathbb{R}^K$, but the metric on $\Sigma M\otimes \mathbb{R}^{K+1}$ is the product metric of $\Sigma M$  and pseudo-Riemannian metric defined by \eqref{eq:g-bar-calN}.
\end{prop}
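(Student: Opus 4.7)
The proof amounts to translating the smooth Euler--Lagrange computation of \autoref{prop:EL-extrinsic} to the weak category. I plan to establish the ``only if'' direction by computing the first variation of $\mathcal{L}$ along admissible variations in $\mathcal{X}^w(M,\mathcal{N})$, and then note that the ``if'' direction follows by reading the same calculation in reverse.

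Starting from $(\phi,\psi)\in\mathcal{X}^w(M,\mathcal{N})$ critical for $\mathcal{L}$, I construct admissible variations separately in $\varphi^0$, $\bar\varphi$ and $\bar\psi$. Since the $\mathbb{R}^1$-factor of $\mathcal{N}$ is unconstrained, for any $\eta^0\in C^\infty(M,\mathbb{R})$ the perturbation $\varphi^0_t=\varphi^0+t\eta^0$ stays in $\mathcal{X}^w$. For the $N$-component, since $N$ is a $C^3$ compact submanifold of $\mathbb{R}^K$, the nearest-point projection $\pi_N\colon V_\delta N\to N$ is $C^2$, and for any $\eta\in C^\infty(M,\mathbb{R}^K)$ the map $\bar\varphi_t\eqdef\pi_N(\bar\varphi+t\eta)$ lies in $\mathcal{X}^w$ for $|t|$ small, with $\partial_t\bar\varphi_t|_{t=0}=(d\pi_N)_{\bar\varphi}\eta=\eta-\eta^\perp$, the tangential projection along $N\subset\mathbb{R}^K$. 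For the spinor, for any $\xi\in C^\infty(M,\Sigma M\otimes \mathbb{R}^{K+1})$, set $\psi_t=\psi+t\xi^\top$ where $\xi^\top$ is the tangential part along $\mathcal{N}\subset\overline{\mathcal{N}}$; this preserves the constraint $\langle\nu,\psi_t\rangle_{\mathbb{R}^{K+1}}=0$.

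Differentiating $\mathcal{L}$ at $t=0$ along these three variations and applying integration by parts---justified by the regularities $\bar\varphi\in W^{1,2}\cap L^\infty$, $\bar\psi\in W^{1,4/3}$, the $C^2$ bounds on $\pi_N,\lambda,\vartheta,\bar\Gamma^c_{ab},\bar A^c_{ab}$ on bounded sets, and the self-adjointness of $\p$ recorded in \autoref{sec:pre}---reproduces the distributional form of the tangential Euler--Lagrange equations established in \autoref{prop:EL} and \autoref{prop:EL-extrinsic}. The three identities in the statement are then the reformulation of these tangential equations against the unconstrained smooth test fields $\eta^0,\eta,\xi$: the normal component of $\eta$ is absorbed through the $A(\nabla\varphi,\nabla\varphi)$ term, which is precisely the mechanism that promotes the tangential map equation into one valid against arbitrary $\eta\in C^\infty(M,\mathbb{R}^K)$; similarly, since $\D\psi$ takes values in $\Sigma M\otimes T\mathcal{N}$, the weak equation $\D\psi=0$ is equivalent via \eqref{eq:bar-D} and \eqref{eq:bar-Dirac} to the extrinsic identity $\p\bar\psi=\bar A(d\phi(e_\alpha),e_\alpha\cdot\psi)-\bar\Gamma(d\bar\phi(e_\alpha),e_\alpha\cdot\bar\psi)$, which can be paired against any unconstrained $\xi$.

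The converse follows by reversing this derivation: any admissible variation $(\phi_t,\psi_t)\subset\mathcal{X}^w$ of $(\phi,\psi)$ has first-order tangent at $t=0$ expressible through tangential projections of smooth fields $(\eta^0,\eta,\xi)$, so $d\mathcal{L}/dt|_{t=0}$ pairs exactly against these fields and vanishes by the three integral identities. The main technical point---rather than a conceptual obstacle---is the careful passage to the weak setting: verifying that $\bar\varphi_t=\pi_N(\bar\varphi+t\eta)$ is well-defined uses $\bar\varphi(x)\in N$ a.e.\ together with continuity of $\pi_N$ on $V_\delta N$, while justifying $t$-differentiation under the integral for the composed quantities $\lambda(\varphi_t)$, $\vartheta(\varphi_t)$, $\bar\Gamma^c_{ab}(\bar\phi_t)$, $\bar A^c_{ab}(\bar\phi_t)$ reduces to dominated convergence once the $C^2$ regularity of these coefficients and the $L^\infty$ control on $\bar\phi$ are in place.
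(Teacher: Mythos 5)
Your proposal has the right overall shape and reflects what the paper does implicitly (the paper states this proposition without a separate proof, relying on \autoref{subsec:EL-smooth} and \autoref{subsec:EL-extrinsic}). However, there is a genuine gap in the way you set up the map variation. You write that you ``construct admissible variations separately in $\varphi^0$, $\bar\varphi$ and $\bar\psi$'' and that $\bar\varphi_t = \pi_N(\bar\varphi + t\eta)$ ``lies in $\mathcal{X}^w$''. But $\mathcal{X}^w$ is a space of \emph{pairs}, and the spinor constraint in \eqref{eq:weak-space}, $\langle\nu,\psi\rangle_{\mathbb{R}^{K+1}} = 0$ for $\nu\in T^\perp_{\phi(x)}\mathcal{N}$, is anchored to $\phi$. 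If you move $\phi$ to $\phi_t$ and freeze $\psi$, the pair $(\phi_t,\psi)$ generally leaves $\mathcal{X}^w$: a spinor tangent to $\mathcal{N}$ along $\phi$ is not tangent along $\phi_t$. The admissible variation therefore must co-vary $\psi$ with $\phi$, which is exactly what the paper does in the proof of \autoref{prop:EL} by declaring the coefficients $\psi^j$ in $\psi_t = \sum_j \psi^j\otimes\partial_{y^j}(\phi_t)$ independent of $t$. This coupling is not a technical nuisance one can skip over: the $t$-derivative of the pullback connection (the term $\III$ in the paper's computation) is precisely what produces the curvature contributions $\overline{\mathcal{R}}^{\sharp\top}$, $\overline{P}^{\sharp\top}$, $\overline{\mathcal{R}}_0$, $\overline{P}_0$ that appear on the right-hand side of the first two identities. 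A variation of $\phi$ alone would not generate these terms.

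A second, smaller point: you claim that the normal part of the test fields is ``absorbed'' by the $A(\nabla\varphi,\nabla\varphi)$ and $\bar A$ terms, which is correct, but the mechanism deserves to be made explicit since it is what upgrades the tangential weak equations to identities valid against unconstrained $\eta\in C^\infty(M,\mathbb{R}^K)$ and $\xi\in C^\infty(M,\Sigma M\otimes\mathbb{R}^{K+1})$. For the map equation it is the Weingarten identity $\int_M\langle\nabla\varphi,\nabla\eta^\perp\rangle = -\int_M\langle A(\nabla\varphi,\nabla\varphi),\eta^\perp\rangle$, using $\nabla\varphi\perp\nu_l(\varphi)$ a.e.; for the spinor it is the purely geometric fact that $\p\bar\psi + \bar\Gamma - \bar A = \bar\iota_*(\D\psi)$ is tangent to $\mathcal{N}$, so the normal pairing vanishes automatically. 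Your sketch gestures at both but should state them, since they are the only substance of the step from constrained to unconstrained test fields.

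With these two repairs your argument coincides with the paper's implicit reasoning, and the ``if'' direction is then indeed obtained by reading the same computation in reverse together with a density argument for the tangent fields of general $\mathcal{X}^w$-admissible variations.
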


\

\section{The continuity of weakly Dirac-harmonic maps}\label{sec:conti}

Here and in the sequel, we will consider the regularity of weakly Dirac-harmonic maps from a closed Riemann surface $(M,g_M)$ into a Lorentzian manifold $(\mathcal{N},g_{\mathcal{N}})$. We can always assume that $M=D$ is a 2-disc with Euclidean metric when we encounter the regularity issue.  In the first subsection, we will rewrite the weakly Dirac-harmonic maps into certain ``standard form'', from which the continuity regularity of the map (i.e., \autoref{mthm:regularity}) is derived from \autoref{mthm:reg-anti-protential}, which is proved in \autoref{subsec:reg-anti-protential}.
\subsection{The local equations of weakly Dirac-harmonic map over 2-disc}
Note first that, if we set $\left\{ \nu_l \right\}_{l=n+1}^K$, $\nu_l=\sum_{a=1}^Kv_l^a\partial_{v^a}$, to be the orthonormal frame of $T^\perp N$ in $\mathbb{R}^K$ (note that, although $\bar{\iota}=\mathrm{id}\times\iota \mathpunct{:}\mathcal{N}\to \overline{\mathcal{N}}\mathpunct{:}=\mathbb{R}^1\times \mathbb{R}^K$ is a pseudo-Riemannian isometric embedding, $\left\{ \nu_l \right\}_{l=n+1}^K$ is not an orthonormal frame of $T^\perp \mathcal{N}$ in $T\overline{\mathcal{N}}$ in general, because the Riemannian metric of $N$ is not the restricted metric of $g_{\overline{\mathcal{N}}}$ on $N$.), then as for harmonic maps,
\[
  A(\nabla\varphi,\nabla\varphi)=\Theta\cdot\nabla\varphi,
\]
where, $\Theta=(\Theta^{da})$, and
\[
  \Theta^{da}=v_l^d(\varphi)\nabla v_l^a(\varphi)-v_l^a(\varphi)\nabla v_l^d(\varphi),
\]
which is clearly anti-symmetric.

Clearly, for a vector $T=(T^1,\ldots,T^K)\in TN\subset T \mathbb{R}^K$, the tangential part of $T$ can be written as
\[
  T^\top=(\widetilde{T}^1,\ldots,\widetilde{T}^K)=T-\left\langle T,\nu_l \right\rangle_{\mathbb{R}^K}\nu_l,
\]
i.e.,
\[
  \widetilde{T}^d=T^d-\sum_{e=1}^KT^e v_l^e v_l^d.
\]
In particular,
\[
  \begin{alignedat}{2}
    \overline{\mathcal{R}}^{\sharp\top}(\bar{\phi},\bar{\psi})&=(\widetilde{\mathcal{R}}^1,\ldots,\widetilde{\mathcal{R}}^K),&\quad
    \widetilde{\mathcal{R}}^d&=\frac{1}{2}\left( \bar{R}_{abcd}-\bar{R}_{abce} v_l^e v_l^d \right)\left\langle \bar{\psi}^a,\nabla\bar{\phi}^c\cdot\bar{\psi}^b \right\rangle_{\Sigma M},\\
    \overline{\mathcal{P}}^{\sharp\top}(\bar{\phi},\bar{\psi})&=(\widetilde{\mathcal{P}}^1,\ldots,\widetilde{\mathcal{P}}^K),&\quad
    \widetilde{\mathcal{P}}^d&=\left\langle \bar{A}_{ad}-\bar{A}_{ae} v_l^e v_l^d,\bar{A}_{bc} \right\rangle_{\overline{\mathcal{N}}}\left\langle \bar{\psi}^a,\nabla\bar{\phi}^c\cdot\bar{\psi}^b \right\rangle_{\Sigma M},\\
    \overline{\mathcal{H}}^{\sharp\top}(\bar{\phi},\bar{\psi})&=(\widetilde{\mathcal{H}}^1,\ldots,\widetilde{\mathcal{H}}^K),&\quad
    \widetilde{\mathcal{H}}^d&=\frac{1}{2}\left( \partial_d\lambda-\partial_e\lambda v_l^e v_l^d \right) \lvert V^\sharp \rvert^2 -\left( \vartheta^d-\vartheta^e v_l^e v_l^d \right)\div (\lambda V^\sharp)\\
                             &&&\qquad-\left[\partial_a\vartheta_d-\partial_d\vartheta_a-(\partial_a\vartheta_e-\partial_e\vartheta_a) v_l^e v_l^d\right]\left\langle \lambda V^\sharp,\nabla\varphi^a \right\rangle.
  \end{alignedat}
\]
If we set
\[
  \begin{alignedat}{2}
    \Upsilon_{db}&\mathpunct{:}=\partial_b\vartheta_d-\partial_d\vartheta_b-(\partial_b\vartheta_e-\partial_e\vartheta_b) v_l^e v_l^d,&\quad
    \Upsilon_d&\mathpunct{:}=\frac{1}{2\lambda^2(\varphi)}\left( \partial_d\lambda-\partial_e\lambda v^e_l v^d_l \right),\\
    \mathcal{Q}^d&\mathpunct{:}=\vartheta^d-\vartheta^e v_l^e v_l^d,&\quad
    \mathcal{RP}^d(\bar{\phi},\bar{\psi})&\mathpunct{:}=\widetilde{\mathcal{R}}^d+\widetilde{\mathcal{P}}^d,
  \end{alignedat}
\]
then the first equation in \eqref{eq:weak-Dirac-Harmonic} is
\begin{equation}\label{eq:div-vphi}
  \begin{split}
    -\div \nabla\varphi^d&=\Theta^{da}\nabla\varphi^a+\Upsilon_{da}\lambda V^\sharp\cdot\nabla\varphi^a-\Upsilon_d\lambda V^\sharp\cdot\lambda(\varphi)(\nabla\varphi^0+\vartheta_a\nabla\varphi^a)\\
                         &\qquad+\mathcal{RP}^d(\bar{\phi},\bar{\psi})+\mathcal{Q}^d \div (\lambda V^\sharp).
  \end{split}
\end{equation}
By \eqref{eq:bar-phi-extrinsic}, we know that the second equation in \eqref{eq:weak-Dirac-Harmonic} is
\begin{equation}\label{eq:div-lambda-V-sharp}
  -\div\left( \lambda(\varphi)(\nabla\varphi^0+\vartheta_a\nabla\varphi^a) \right)=\mathcal{RP}_0(\bar{\phi},\bar{\psi}),
\end{equation}
where
\[
  \mathcal{RP}_0(\bar{\phi},\bar{\psi}) \mathpunct{:}=\frac{1}{2}\bar{R}_{abc0}(\bar{\phi})\left\langle \bar{\psi}^a,\nabla\bar{\phi}^c\cdot\bar{\psi}^b \right\rangle_{\Sigma M}+\left\langle \bar{A}_{bc},\bar{A}_{a0} \right\rangle_{\bar{\phi}^{-1}T\overline{\mathcal{N}}}\left\langle \bar{\psi}^a,\nabla\bar{\phi}^c\cdot\bar{\psi}^b \right\rangle_{\Sigma M}.
\]
Finally, recall that the component equation of $\bar{\psi}=(\psi^0,\psi^1,\ldots,\psi^K)$ in \eqref{eq:weak-Dirac-Harmonic} is given by \eqref{eq:psi-extrinsic-compo}, and the twisted bundle $\Sigma D\otimes\bar{\phi}^{-1}T\overline{\mathcal{N}}$ is trivial, thus $\bar{\psi}$ can be viewed as a vector valued function from $D$ to $\mathbb{C}^2\otimes \mathbb{R}^{K+1}$. In conclusion, if we transform \eqref{eq:div-vphi}, \eqref{eq:div-lambda-V-sharp} and \eqref{eq:psi-extrinsic-compo} into matrix form, then we obtain the following proposition.
\begin{prop}\label{prop:weak-dirac-harmonic-local}
  Suppose $(\phi,\psi)\in \mathcal{X}^w(D,\mathcal{N})$ is a weakly Dirac-harmonic map. Then locally, under the identification of $\bar{\phi}$, $\bar{\psi}$ with $\phi=(\varphi^0,\varphi)=(\varphi^0,\varphi^1,\ldots,\varphi^K)$ and $\psi=(\psi^0,\psi^1,\ldots,\psi^K)$ respectively, the equation of $\psi$ and $\phi$ can be written as
  \begin{equation}\label{eq:psi-simple}
    \p\psi^d=B_b^d\cdot\psi^b,
  \end{equation}
  for some $B=(B^a_b)_{n\times n}\in L^2(D)$, $n=K+1$, and
  \begin{equation}\label{eq:phi-simple}
    -\div(Q\nabla\phi)=\Theta\cdot Q\nabla\phi+F\Omega\cdot Q\nabla\phi+\upsilon,
  \end{equation}
  respectively, where
  \[
    \begin{alignedat}{2}
      Q&=Q(\varphi)=
      \begin{pmatrix}
        \lambda(\varphi)&\lambda(\varphi)\vartheta\\
        0 & I_K
      \end{pmatrix}_{n\times n},&\quad\vartheta&=(\vartheta_1,\ldots,\vartheta_K),\\
      \Theta&=
      \begin{pmatrix}
        0&0\\
        0&\left( \Theta^{ab} \right)_{K\times K}
      \end{pmatrix}_{n\times n},&\quad
      F&=
      \begin{pmatrix}
        0&0&\cdots&0\\
        -\Upsilon_1&\Upsilon_{11}&\cdots&\Upsilon_{1K}\\
        \vdots  &             &\ddots&\vdots\\
        -\Upsilon_K&\Upsilon_{K1}&\cdots&\Upsilon_{KK}
      \end{pmatrix}_{n\times n},\\
      \Omega&=\lambda(\varphi)\mathrm{diag}(V^\sharp,\ldots,V^\sharp)_{n\times n},&\quad V^\sharp &\mathpunct{:}=\nabla\varphi^0+\vartheta_a\nabla\varphi^a,\\
      W&=\mathrm{diag}(w,\ldots,w)_{n\times n},&\quad w &\mathpunct{:}=\mathcal{RP}_0(\phi,\psi),\\
      \upsilon&=(w,v^1,\ldots, v^K)_{n\times 1}^T,&\quad v^d &\mathpunct{:}=\mathcal{RP}^d-\mathcal{Q}^d w.
    \end{alignedat}
  \]
\end{prop}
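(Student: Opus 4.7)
The plan is to treat \autoref{prop:weak-dirac-harmonic-local} as a purely algebraic repackaging of the three scalar equations already derived above, namely the spinor equation \eqref{eq:psi-extrinsic-compo} and the two map equations \eqref{eq:div-vphi}--\eqref{eq:div-lambda-V-sharp}. Nothing new about Dirac-harmonicity needs to be shown; I just have to verify, row by row, that the matrix identities \eqref{eq:psi-simple} and \eqref{eq:phi-simple} are equivalent to the scalar identities on the nose, and that the matrices listed have the claimed regularity.

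For the spinor identity, I would simply read \eqref{eq:psi-extrinsic-compo} as $\p\psi^d = B^d_b\cdot\psi^b$ with
\[
  B^d_b \mathpunct{:}= \bigl(\bar A^d_{ab}(\phi)-\bar\Gamma^d_{ab}(\phi)\bigr)\nabla\phi^a.
\]
By the Remark following \autoref{prop:iso-embedding}, the Christoffel symbols of $(\overline{\mathcal{N}},g_{\overline{\mathcal{N}}})$ and the components of $\bar A$ are $L^\infty$-bounded, hence $B\in L^2(D)$ follows immediately from $\nabla\phi\in L^2(D)$.

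For the map identity, I would first unpack $Q\nabla\phi$, obtaining $(Q\nabla\phi)_0 = \lambda(\varphi) V^\sharp$ and $(Q\nabla\phi)_d = \nabla\varphi^d$ for $d\geq1$; hence $-\div(Q\nabla\phi)$ splits into $-\div(\lambda V^\sharp)$ in the zeroth slot and $-\div\nabla\varphi^d$ in the remaining slots. The zeroth row of $\Theta$ and of $F$ are zero by construction, so the zeroth component of the right-hand side of \eqref{eq:phi-simple} reduces to $\upsilon_0=w=\mathcal{RP}_0(\phi,\psi)$, which is exactly \eqref{eq:div-lambda-V-sharp}. For $d\geq1$, the block structure of $\Theta$ gives $\Theta^{db}\nabla\varphi^b$, and unpacking $F\Omega\cdot Q\nabla\phi$ produces the two terms
\[
  -\Upsilon_d\,\lambda V^\sharp\cdot\lambda V^\sharp + \Upsilon_{db}\,\lambda V^\sharp\cdot\nabla\varphi^b,
\]
while $\upsilon_d=v^d=\mathcal{RP}^d-\mathcal{Q}^d w$ combines with $w=-\div(\lambda V^\sharp)$ from \eqref{eq:div-lambda-V-sharp} to yield the $\mathcal{RP}^d+\mathcal{Q}^d\div(\lambda V^\sharp)$ piece appearing in \eqref{eq:div-vphi}. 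Comparing line-by-line with \eqref{eq:div-vphi} then completes the identification.

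There is no real obstacle, only bookkeeping: the only subtlety is the $1/(2\lambda^2)$ prefactor hidden inside the definition of $\Upsilon_d$, which is chosen precisely so that $\Upsilon_d\,(\lambda V^\sharp)\cdot(\lambda V^\sharp)$ reproduces the intrinsic coefficient $\tfrac12(\partial_d\lambda-\partial_e\lambda\,v_l^e v_l^d)|V^\sharp|^2$ appearing in $\widetilde{\mathcal{H}}^d$. Once that normalization is absorbed, the remaining entries of $F$, $\Omega$, $Q$ and $\upsilon$ can be matched mechanically. Finally, the regularity of the matrix entries themselves follows from the $C^2$ hypotheses on $\lambda,\vartheta$ and $\bar{g}$ together with the boundedness of $\phi$ and the Sobolev regularity $\varphi\in W^{1,2}(D)$: this places $Q,F\in W^{1,2}\cap L^\infty$ and $\Theta,\Omega\in L^2$, matching the hypotheses later invoked in \autoref{mthm:reg-anti-protential}.
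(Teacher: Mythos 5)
Your proposal is correct and follows the same route the paper takes: the paper itself gives no separate proof of \autoref{prop:weak-dirac-harmonic-local}, merely stating that it follows by transforming \eqref{eq:div-vphi}, \eqref{eq:div-lambda-V-sharp} and \eqref{eq:psi-extrinsic-compo} into matrix form, and your row-by-row verification of that bookkeeping (including the $1/(2\lambda^2)$ normalization of $\Upsilon_d$, the vanishing zeroth rows of $\Theta$ and $F$, the identity $-\div\Omega=W$ with $w=\mathcal{RP}_0=-\div(\lambda V^\sharp)$, and the regularity of the coefficient matrices) is precisely what is being implicitly invoked.
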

\subsection{An improved \texorpdfstring{$L^p$}{Lp}-regularity of the spinor} \label{sec:regularity-spinor}
Note that the terms $w$ and $\upsilon$ in \eqref{eq:phi-simple} are equivalent to $\left\langle \psi,\nabla\phi\cdot\psi \right\rangle_{\Sigma D}$, which are a priori in $L^1(D)$ merely. Our next step is to improve the regularity of $\psi$, which implies \eqref{eq:phi-simple} is an $L^q(D)$ perturbation of the equation for weakly harmonic maps into Lorentzian manifold for some $q$ with $1<q<2$.
\begin{thm}\label{thm:reg-spinor}
  Suppose $(\phi,\psi)\in \mathcal{X}^w(D,\mathcal{N})$ is a weakly Dirac-harmonic map from $D$ to $\mathcal{N}$, then $\psi$ solves \eqref{eq:psi-simple} weakly. Moreover, $\psi\in L^p(D)$ for any $p>4$.
\end{thm}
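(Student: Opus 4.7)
The plan is to split the argument into two parts: first I would establish the weak form \eqref{eq:psi-simple} for $\psi$, and then upgrade the a~priori integrability $\psi\in W^{1,4/3}(D)\embedto L^4(D)$ to $\psi\in L^p(D)$ for every finite $p$.

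For the weak equation, I would test the third identity of \autoref{defn:weak-dirac-hm} against compactly supported smooth spinors; this is exactly the distributional form of the componentwise expression \eqref{eq:psi-extrinsic-compo}. Collecting the coefficients into $B^d_b=(\bar A^d_{ab}(\bar\phi)-\bar\Gamma^d_{ab}(\bar\phi))\nabla\bar\phi^a$ acting by Clifford multiplication, and using the $L^\infty$ bounds on $\bar A$ and $\bar\Gamma$ guaranteed by \autoref{prop:iso-embedding} and \eqref{eq:bdd-A}, I would obtain $\lvert B\rvert\leq C\lvert\nabla\phi\rvert$, and therefore $B\in L^2(D,\mathrm{M}(n))$.

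The core of the second part is to exploit smallness of $\lVert B\rVert_{L^2}$ on small balls together with the Riesz-potential character of the inverse two-dimensional Dirac operator. Given $x_0\in D$, absolute continuity of $\lvert\nabla\phi\rvert^2\,dx$ lets me fix $r>0$ with $\lVert B\rVert_{L^2(B_r(x_0))}\leq\epsilon$, where the small constant $\epsilon$ is to be determined. I would take a cutoff $\eta\in C^\infty_c(B_r)$ equal to $1$ on $B_{r/2}$ with $\lvert\nabla\eta\rvert\leq C/r$, set $\tilde\psi\eqdef\eta\psi$ extended by zero to $\mathbb{R}^2$, and use the Leibniz rule to write
\[
  \p\tilde\psi=B\tilde\psi+\nabla\eta\cdot\psi\quad\text{on $\mathbb{R}^2$.}
\]
Convolving with the fundamental solution $G$ of $\p$ on $\mathbb{R}^2$ (matrix-valued of Riesz-potential type, $\lvert G(x)\rvert\leq C/\lvert x\rvert$) and observing that any $L^q$ harmonic spinor on $\mathbb{R}^2$ with $q<\infty$ vanishes, I would obtain the integral identity $\tilde\psi=\mathcal I(B\tilde\psi)+\mathcal I(\nabla\eta\cdot\psi)$ with $\mathcal I f\eqdef G*f$. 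The Hardy--Littlewood--Sobolev inequality then gives, for every $p\in(2,\infty)$,
\[
  \lVert\mathcal I(B\tilde\psi)\rVert_{L^p(\mathbb{R}^2)}\leq C\lVert B\tilde\psi\rVert_{L^{2p/(p+2)}}\leq C\lVert B\rVert_{L^2(B_r)}\lVert\tilde\psi\rVert_{L^p(\mathbb{R}^2)}\leq C\epsilon\lVert\tilde\psi\rVert_{L^p(\mathbb{R}^2)},
\]
while $\mathcal I(\nabla\eta\cdot\psi)\in L^p(\mathbb{R}^2)$ for every $p<\infty$, because $\nabla\eta\cdot\psi$ is compactly supported and belongs to $L^q(B_r)$ for every $q\leq 4$, so the HLS target $2q/(2-q)$ can be made arbitrarily large by taking $q$ close to $2$.

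The hard part will be the chicken-and-egg nature of this contraction: the estimate on $\mathcal I(B\tilde\psi)$ presupposes $\tilde\psi\in L^p$, whereas a~priori only $\tilde\psi\in L^4$ is available. My plan is to resolve this by noting that the operator $S\mathpunct{:}u\mapsto\mathcal I(B\chi_{B_r}u)$ is simultaneously a strict contraction on $L^4(\mathbb{R}^2)$ (HLS is still valid at $2\cdot 4/(4+2)=4/3\in(1,2)$) and on $L^p(\mathbb{R}^2)$ for every $p>4$, provided $\epsilon$ is chosen so that $C\epsilon<1/2$. Since $g\eqdef\mathcal I(\nabla\eta\cdot\psi)\in L^4\cap L^p$, the Neumann series $\sum_{k\geq 0}S^kg$ converges in both $L^4$ and $L^p$ to the same distribution; uniqueness of the $L^4$ fixed point then forces this common limit to coincide with $\tilde\psi$, yielding $\tilde\psi\in L^p(\mathbb{R}^2)$. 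A finite covering of $D$ by such balls would then give $\psi\in L^p(D)$ for every $p<\infty$, and in particular every $p>4$.
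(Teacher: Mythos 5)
Your proposal is correct and follows essentially the same route as the paper: the paper reduces the $L^p$ improvement to the lemma it cites immediately after the theorem statement (from Jost et al., following Wang), and that lemma is proved by precisely the mechanism you spell out — cutoff, inversion of the planar Dirac operator by its Riesz-potential fundamental solution, Hardy--Littlewood--Sobolev, and a smallness/contraction bootstrap. The only differences are that you re-prove that lemma directly rather than cite it, and your version is tailored to $m=2$, where the Morrey norm $M_2^2$ appearing in the cited lemma coincides with $L^2$, so nothing is lost; your Neumann-series resolution of the $L^4$-versus-$L^p$ chicken-and-egg issue is the standard and correct way to close that loop.
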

Since $\phi\in W^{1,2}$ and $\psi\in W^{1,4/3}$, by Sobolev embedding theorem, $\phi\in W^{1,2}\cap L^q$ for any $q\in(1,+\infty)$ and $\psi\in L^4$. Note that $\nabla\phi\in L^2$, $\bar{A}$ and $\bar{\Gamma}$ are $L^\infty$ bounded as remarked in \autoref{prop:iso-embedding}, if we take $D'\subset D$ small enough, the smallness condition is satisfied in the following lemma, which in return shows that $\psi\in L^p(D)$ for any $p>4$.

Although we only need to apply the following lemma to the case $m=2$, we state here the higher dimensional case, where the $L^p$ norm is replaced by Morrey norm. Such kind of result was first obtained in \cite{Wang2010remark} for some other Dirac type equations.
\begin{lem}[\cite{JostKeslerTolksdorfWuZhu2018Regularity}*{Lem.~2}]
  Suppose $m\geq2$, $B_1\subset \mathbb{R}^m$ and $\psi\in L^4(B_1,\mathbb{C}^L\otimes\mathbb{R}^{K+1})$ is a weak solution of the nonlinear system
  \[
    \p\psi^a=A^a_b\psi^b+B^a,\quad 0\leq a,b\leq K,
  \]
  where $A\in M_2^2(B_1,\mathfrak{gl}(L,\mathbb{C})\otimes \mathfrak{gl}(K+1,\mathbb{R}))$ and $B\in M_2^2(B_1,\mathbb{C}^L\otimes \mathbb{R}^{K+1})$. Then, for any $4<p<+\infty$, $\psi\in L^p(B_{1/2})$ provided that $\lVert A \rVert_{M_2^2(B_1)}\leq\epsilon_0$, for some constant $\epsilon_0=\epsilon_0(m,p)>0$.
\end{lem}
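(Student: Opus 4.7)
The approach is to combine Green's representation for the Dirac operator on $\mathbb{R}^m$ with Adams-type Riesz potential estimates on Morrey spaces, exploiting the smallness hypothesis on $\lVert A\rVert_{M_2^2}$. First, I would localize with cutoffs $\eta\in C_c^\infty(B_{3/4})$, $\eta\equiv 1$ on $B_{1/2}$, and $\widetilde\eta\in C_c^\infty(B_1)$ with $\widetilde\eta\equiv 1$ on $\supp\eta$, so that $\widetilde\eta\psi=\psi$ on $\supp\eta$. Since $\eta\psi$ has compact support in $\mathbb{R}^m$ and
\[
\p(\eta\psi)=(\nabla\eta)\cdot\psi+\eta A(\widetilde\eta\psi)+\eta B,
\]
the Green's representation for $\p$, whose fundamental solution satisfies $\lvert G(x-y)\rvert\leq C\lvert x-y\rvert^{1-m}$, gives
\[
\eta\psi \;=\; G\ast[(\nabla\eta)\psi] \;+\; G\ast[\eta A(\widetilde\eta\psi)] \;+\; G\ast[\eta B],
\]
and $\lvert G\ast f\rvert$ is pointwise dominated by the Riesz potential $I_1(\lvert f\rvert)$.

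Second, I would apply Adams' embedding $I_1\colon M_\lambda^s\to M_\lambda^{s^*}$ with $1/s^*=1/s-1/\lambda$ (in the admissible range) together with the Morrey H\"older inequality $\lVert fg\rVert_{M_\lambda^s}\leq\lVert f\rVert_{M_\lambda^{s_1}}\lVert g\rVert_{M_\lambda^{s_2}}$, $1/s=1/s_1+1/s_2$. Taking $\lambda=2$, $s_1=2$, $s_2=p$ yields the key estimate
\[
\lVert G\ast[\eta A f]\rVert_{M_2^p}\leq C(m,p)\lVert A\rVert_{M_2^2}\lVert f\rVert_{M_2^p}
\]
for $f$ compactly supported in $B_1$. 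The remaining two ``data'' terms $G\ast[(\nabla\eta)\psi]$ and $G\ast[\eta B]$ lie in $M_2^p$ for every $p<\infty$: their inputs are compactly supported functions belonging respectively to $L^4$ and $M_2^2$, and a finite number of Adams--Sobolev steps pushes the integrability past any prescribed threshold.

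Third, I would close the argument by a contraction/uniqueness step. Choose $\epsilon_0=\epsilon_0(m,p)$ so that $C(m,p)\epsilon_0\leq 1/2$. The operator $\mathcal{T}\colon f\mapsto G\ast[\eta A(\widetilde\eta f)]$ is then a self-map of the space of $M_2^p$-functions supported in $B_1$ with Lipschitz constant $\leq 1/2$; combined with the $M_2^p$-data $F_0:=G\ast[(\nabla\eta)\psi]+G\ast[\eta B]$, the Banach fixed-point theorem produces a unique solution of $f=\mathcal{T}(f)+F_0$ in $M_2^p$. Running the same contraction at the initial Morrey scale--where $\eta\psi$ lies once the $L^4$-hypothesis on $\psi$ has been converted into a Morrey bound--identifies $\eta\psi$ with this fixed point, whence $\eta\psi\in M_2^p\subset L^p$ and thus $\psi\in L^p(B_{1/2})$.

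The main obstacle I expect is the bookkeeping of Morrey exponents when $m\geq 3$, since the bare hypothesis $\psi\in L^4$ does not imply $\psi\in M_2^4$ in that case; a preliminary bootstrap on slightly larger concentric balls is needed to gain an initial Morrey bound before the contraction framework can be invoked. Extracting the threshold $\epsilon_0(m,p)$ uniformly from this iteration, with the correct $p$-dependence inherited from the Adams and Morrey--H\"older constants, is the technical heart of the proof; the rest is a careful but standard application of potential theory on Morrey scales.
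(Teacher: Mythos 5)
The paper does not supply a proof of this lemma; it is imported verbatim from the cited reference (JKTWZ, Lem.~2) and used as a black box, so there is no in-paper argument to compare against. Your strategy --- Green's representation for $\p$, dominating $|G\ast f|$ by the Riesz potential $I_1|f|$, the Morrey--H\"older inequality, Adams' $I_1\colon M_\lambda^s\to M_\lambda^{s^*}$, and absorption via the smallness of $\lVert A\rVert_{M_2^2}$ --- is indeed the correct and standard approach for Dirac-type regularity in Morrey scales (going back to Wang's remark on nonlinear Dirac equations), and your key estimate $\lVert G\ast[Af]\rVert_{M_2^p}\leq C(m,p)\lVert A\rVert_{M_2^2}\lVert f\rVert_{M_2^p}$ is correctly derived. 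For $m=2$ (the only case the present paper actually uses) the scheme closes cleanly, because $M_2^q=L^q$, the hypothesis $\psi\in L^4$ coincides with the Morrey-space starting point, and $\mathcal{T}$ is a contraction on $L^4$ as well, which secures the identification of $\eta\psi$ with the Banach fixed point.

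For $m\geq 3$, however, the issue you flag at the end is not a bookkeeping matter but a genuine gap that your sketch does not close. The contraction produces a unique fixed point $f^\ast\in M_2^p$, but you cannot identify $f^\ast$ with $\eta\psi$ without a norm in which (i) both $f^\ast$ and $\eta\psi$ are finite and (ii) $\mathcal{T}$ is still contractive. The only norm you know $\eta\psi$ lies in a priori is $L^4=M_m^4$, and $\mathcal{T}$ is \emph{not} a self-map of $L^4$ when $m\geq 3$: with $A\in L^2$ one gets $A f\in L^{4/3}$ and $I_1\colon L^{4/3}\to L^q$ with $1/q=3/4-1/m>1/4$, i.e.\ a loss of integrability. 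Running the contraction ``at the initial Morrey scale'' as you suggest runs into the same problem: writing $L^4\subset M_\mu^{4\mu/m}$ for $2\leq\mu\leq m$ and applying Morrey--H\"older with $A\in M_\mu^2$ followed by Adams' $I_1$ at level $\lambda=\mu$ is at best neutral ($\mu=2$) and strictly lossy for $\mu>2$, so a naive bootstrap on concentric balls does not gain the missing Morrey exponent. Some additional input is needed --- either a different mechanism (e.g.\ a covering/iteration over balls with careful tracking of the $r$-scaling) or, more likely, a stronger a priori assumption on $\psi$ (such as $\psi\in M_2^4$, which is what actually enters in the cited source) --- and your proposal does not supply it. Two minor points: the operator should simply be $\mathcal{T}(f)=G\ast[Af]$ (the scalar cutoff $\eta$ commutes with $A$, so $\eta A\psi=A(\eta\psi)$; writing $\eta A\widetilde\eta f$ applied to $\eta\psi$ gives $\eta^2 A\psi$, not $\eta A\psi$), and $G\ast g$ is not compactly supported, so the fixed-point space should not be ``functions supported in $B_1$'' but rather $M_2^p$ on a slightly larger fixed ball.
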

\subsection{The continuity of generalized harmonic maps}\label{subsec:reg-anti-protential}
In this subsection, we will prove \autoref{mthm:reg-anti-protential}, which is a further generalization of the one in \cite{Zhu2013Regularity}*{Thm.~1.2}, and initially obtained by Rivi\`ere and Struwe \cite{RiviereStruwe2008Partial}*{Thm.~1.1} for elliptical systems with anti-symmetric structure, and developed by Hajlasz, Strezelecki and Zhong \cite{HajaszStrzeleckiZhong2008new}*{Thm.~1.2} and Schikorra \cite{Schikorra2010remark}*{Rmk.~3.4}. The proof is similar to \cite{Zhu2013Regularity}*{Thm.~1.2}. The first step is to apply the Hodge decomposition (see \autoref{thm:hodge}) to $\Omega$ and obtain a divergence free structure with additional perturbation term, which needs to be handled carefully.
\begin{proof}[Proof of \autoref{mthm:reg-anti-protential}]
  By Hodge decomposition \autoref{thm:hodge}, there exist $\eta\in W^{1,2}(B,\mathrm{M}(n))$ and $\zeta\in W^{1,2}(B, \mathrm{M}(n)\otimes\wedge^{m-2}\mathbb{R}^m)$, such that
  \begin{equation}\label{eq:hodge}
    \Omega=\nabla\eta+\curl\zeta,\quad x\in B.
  \end{equation}
  Moreover,
  \begin{equation}\label{eq:hodge-est}
    \lVert \eta \rVert_{W^{1,2}(B_{2/3})}\leq C\lVert \Omega \rVert_{L^2(B)},\quad\lVert \nabla\zeta \rVert_{M_2^2(B)}\leq\lVert \Omega \rVert_{M_2^2(B)}.
  \end{equation}
  By \eqref{eq:thm-anti-upsilon} we know that $\Delta\eta=-W\in L^q(B)$, thus $\eta\in W^{2,q}(B)$ and for $q^*=mq/(m-q)$,
  \[
    \lVert \nabla\eta \rVert_{L^{q^*}(B_1/2)}\leq C\lVert \eta \rVert_{W^{2,q}(B_{1/2})}\leq C\left( \lVert \eta \rVert_{L^q(B_{2/3})}+\lVert W \rVert_{L^q(B_{2/3})} \right).
  \]

  Apply the above argument to the scaled equation
  \[
    -\div\widetilde{\Omega}=\widetilde{W},\quad \widetilde{\Omega}(x)=\Omega(x_0+Rx),\quad \widetilde{W}(x)=RW(x_0+Rx),\quad x\in B,
  \]
  we obtain
  \begin{equation}\label{eq:bar-upsilon}
    \lVert \nabla\eta \rVert_{L^{q^*}(B_{R/2}(x_0))}\leq CR^{m/q^*}\left( R^{-m/2}\lVert \Omega \rVert_{L^2(B_R(x_0))}+R^{1-m/q}\lVert W \rVert_{L^q(B_R(x_0))} \right).
  \end{equation}

  Note that \eqref{eq:thm-anti-u} implies
  \begin{equation}\label{eq:Q-u}
    -\div (Q\nabla u)=\Theta\cdot Q\nabla u+F \curl\zeta\cdot G\nabla u+F \mathcal{A}\cdot G\nabla u+\upsilon.
  \end{equation}
  where
  \[
    \mathcal{A}=\mathrm{diag}(\underbrace{\nabla\eta,\ldots,\nabla\eta}_n).
  \]
  Let $\bar{\upsilon} \mathpunct{:}=F \mathcal{A}\cdot G\nabla u$ and $\tilde{\upsilon} \mathpunct{:}=\bar{\upsilon}+\upsilon$. Note that, as we remarked our theorem holds also for $\upsilon\in L^s$ for some $s>m/2$, however, $\bar{\upsilon}\in L^q$ for some $1<q<2$ merely, which explains why we need to handle them differently. By \eqref{eq:thm-anti-condi}, if we take $\epsilon=\epsilon(m,\Lambda)>0$ small enough, then we can apply \autoref{lem:coulomb} to \eqref{eq:Q-u} to show, there exist $P\in W^{1,2}(B,\mathrm{SO}(n))$ and $\xi\in W_0^{1,2}(B,\mathfrak{so}(n)\otimes\wedge^{m-2}\mathbb{R}^m)$, such that
  \begin{equation}\label{eq:P-Q-u}
    -\div (P^{-1}Q\nabla u) =\curl\xi\cdot P^{-1}Q\nabla u+P^{-1}F \curl\zeta\cdot G\nabla u+P^{-1}\tilde{\upsilon}.
  \end{equation}
  If we write $P^{-1}=(P_{ab})$, $F=(F^{ab})$, $G=(G^{ab})$, $Q=(Q^{ab})$, $\Theta=(\Theta^{ab})$, $\zeta=(\zeta^{ab})$, $\xi=(\xi^{ab})$, then \eqref{eq:P-Q-u} can be written as
  \begin{equation}\label{eq:P-Q-u-component}
    -\div (P_{ab}Q^{bc}\nabla u^c)=P_{bc}Q^{cd}\curl\xi^{ab}\cdot\nabla u^d+P_{ab}F^{bc}G^{de}\cdot \curl\zeta^{cd}\nabla u^e+P_{ab}\tilde{\upsilon}^b.
  \end{equation}
  Since $P^{-1}\in W^{1,2}(B,\mathrm{SO}(n))$, $F\in W^{1,2}\cap L^\infty(B, \mathrm{M}(n))$, $G\in W^{1,2}\cap L^\infty(B,\mathrm{M}(n))$ and $Q\in W^{1,2}\cap L^\infty(B,\mathrm{GL}(n))$, we have $P_{bc}Q^{cd}\in W^{1,2}\cap L^\infty(B)$, $P_{ab}F^{bc}G^{de}\in W^{1,2}\cap L^\infty(B)$. Apply \eqref{eq:thm-anti-condi-infty}, it is easy to show
  \begin{equation}\label{eq:coeffi-est-Morrey}
    \begin{multlined}
      \lVert \nabla(P_{bc}Q^{cd}) \rVert_{M_2 ^2(B)}+\lVert \nabla(P_{ab}F^{bc}G^{de}) \rVert_{M_2 ^2(B)}\\
      \leq C(\Lambda)\left( \lVert \nabla P \rVert_{M_2 ^2(B)}+\lVert \nabla Q \rVert_{M_2 ^2(B)}+\lVert \nabla F \rVert_{M_2 ^2(B)}+\lVert \nabla G \rVert_{M_2 ^2(B)} \right).
    \end{multlined}
  \end{equation}
  Combining it with \eqref{eq:coulomb-est} and the assumption \eqref{eq:thm-anti-condi}, note also \eqref{eq:hodge-est}, we obtain
  \begin{equation}
    \begin{multlined}
      \lVert \nabla u \rVert_{M_2 ^2(B)}+\sum_{c}\lVert \nabla(P_{bc}Q^{cd}) \rVert_{M_2 ^2(B)}+\sum_{b,c}\lVert \nabla(P_{ab}F^{bc}G^{de}) \rVert_{M_2 ^2(B)}\\
      +\lVert \nabla\xi \rVert_{M_2 ^2(B)}+\lVert \curl\zeta \rVert_{M_2 ^2(B)} \leq C(\Lambda)\epsilon(m,\Lambda).
    \end{multlined}
  \end{equation}
  On the other hand, since $P^{-1}\in \mathrm{SO}(n)$, it follows from \eqref{eq:thm-anti-condi-infty} that
  \begin{equation}\label{eq:PiQ-nabla-u}
    \frac{1}{C(\Lambda)}\lvert \nabla u \rvert\leq \lvert P^{-1}Q\nabla u \rvert=\lvert Q\nabla u \rvert\leq C(\Lambda)\lvert \nabla u \rvert.
  \end{equation}

  Let $x_0\in B$, $0<r<R< \frac{1}{2}\mathrm{dist}(x_0,\partial B)$, and apply Hodge decomposition (see \cite{IwaniecMartin2001Geometric}*{Cor.~10.5.1}) to $P^{-1}Q\nabla u$, we can find $f\in W_0^{1,2}(B_R(x_0), \mathbb{R}^n)$, $g\in W_0^{1,2}(B_R(x_0), \mathbb{R}^n\otimes\wedge^{m-2}\mathbb{R}^m)$ and harmonic $h\in C^\infty(B_R(x_0), \mathbb{R}^n\otimes \mathbb{R}^m)$, such that
  \begin{equation}\label{eq:hodge-P-Q-u}
    P^{-1}Q\nabla u=\nabla f+\curl g+h,\quad \xtext{for a.e. $x\in B_R(x_0)$,}
  \end{equation}
  and by \eqref{eq:P-Q-u},
  \begin{equation}\label{eq:f}
    \begin{dcases}
      -\Delta f=\curl\xi\cdot P^{-1}Q\nabla u+P^{-1}F\curl\zeta\cdot G\nabla u+P^{-1}\tilde{\upsilon},& x\in B_R(x_0)\\
      f=0,& x\in\partial B_R(x_0),
    \end{dcases}
  \end{equation}
  and
  \begin{equation}\label{eq:g}
    \begin{dcases}
      -\Delta g=*\left(d(P^{-1}Q)\wedge d u\right),&x\in B_R(x_0)\\
      g=0,&x\in \partial B_R(x_0).
    \end{dcases}
  \end{equation}
  Fix $1<p<\frac{m}{m-1}$. Since $h$ is harmonic, we know that (see \cite{Giaquinta1983Multiple}*{Thm.~2.1}),
  \[
    \int_{B_r(x_0)}\lvert h \rvert^p\leq C(p)\left( \frac{r}{R} \right)^{m}\int_{B_R(x_0)}\lvert h \rvert^p.
  \]
  Then, by \eqref{eq:PiQ-nabla-u} and \eqref{eq:hodge-P-Q-u},
  \begin{equation}\label{eq:u-Br}
    \int_{B_r(x_0)}\lvert \nabla u \rvert^p\leq C(p,\Lambda)\left( \int_{B_R(x_0)}\lvert \nabla f \rvert^p+\int_{B_R(x_0)}\lvert \curl g \rvert^p+\left( \frac{r}{R} \right)^m\int_{B_R(x_0)}\lvert \nabla u \rvert^p \right).
  \end{equation}

  First, we estimate $\lVert \nabla f \rVert_{L^p(B_R(x_0))}$. Since $f=0$ on $\partial B_R(x_0)$, by duality,
  \[
    \lVert \nabla f \rVert_{L^p(B_R(x_0))}\leq C(p)\sup_{\substack{\varphi\in C_0^\infty(B_R(x_0))\\\lVert \varphi \rVert_{W^{1,p^*}}\leq 1}}\int_{B_R(x_0)}\nabla f\cdot\nabla\varphi,
  \]
  where $p^*=p/(p-1)$, and thereafter the norms refer to the domain $B_R(x_0)$. Note that $W^{1,p^*}_0(B_R(x_0))\hookrightarrow C^{1-m/p^*}(B_R(x_0))$. Therefore, for any $\varphi\in W^{1,p^*}_0(B_R(x_0))$, with $\lVert \varphi \rVert_{W^{1,p^*}}\leq 1$, we have
  \[
    \lVert \varphi \rVert_{L^\infty}\leq C R^{1-m/p^*}\lVert \varphi \rVert_{W^{1,p^*}}\leq CR^{1-m/p^*},\quad
    \lVert \nabla\varphi \rVert_{L^2}\leq CR^{m/2-m/p^*}.
  \]
  Moreover, by \eqref{eq:f}, we estimate
  \begin{align*}
    \int_{B_R(x_0)}\nabla f\cdot\nabla\varphi&=-\int_{B_R(x_0)}\Delta f\varphi\\
                                             &=\int_{B_R(x_0)} P_{bc}Q^{cd}\varphi^a\curl\xi^{ab}\cdot\nabla u^d+ P_{ab}F^{bc}G^{de}\varphi^a\curl\zeta^{cd}\cdot\nabla u^e +P_{ab}\tilde{\upsilon}^b\varphi^b\\
                                             &=\I+\II+\III.
  \end{align*}
  To simplify the notation in what follows, we also introduce the following notations:
  \[
    \mathcal{J}_p(x,r)\mathpunct{:}=\frac{1}{r^{m-p}}\int_{B_r(x)}\lvert \nabla u \rvert^p,\quad
    \mathcal{M}_p(y,R)\mathpunct{:}=\sup_{B_r(x)\subset B_R(y)}\mathcal{J}_p(x,r),\quad \mathcal{M}_p(R)=\mathcal{M}_p(0,R).
  \]
  By \autoref{lem:Hardy-BMO-ineq} and the conditions \eqref{eq:thm-anti-condi}, \eqref{eq:thm-anti-condi-infty}, we obtain,
  \[
    \I+\II=\int_{B_R(x_0)}P_{bc}Q^{cd}\varphi^a\curl\xi^{ab}\cdot\nabla u^d
    \leq C(\Lambda)\epsilon(m,\Lambda)R^{m-1-m/p^*}\mathcal{M}_p(x_0,2R),
  \]

  Now, by assumption, $\Omega\in M_2 ^2(B)$ and $W\in M_2 ^q(B)$. We see that from \eqref{eq:bar-upsilon}, for any $B_{2R}(x_0)\subset B$,
  \[
    \lVert \nabla\eta \rVert_{L^{q^*}(B_{R}(x_0))}\leq C R^{m/q^*-1}\left( \lVert \Omega \rVert_{M_2 ^2(B_{2R}(x_0))}+R^{2-2/q}\lVert W \rVert_{M_2 ^q(B_{2R}(x_0))} \right).
  \]
  By H\"older's inequality, for $\bar{q}=1/(1/q+1/p-1/m)$,
  \begin{align*}
    \lVert \bar{\upsilon} \rVert_{L^{\bar{q}}(B_R(x_0))}&\leq C(\Lambda)\lVert \mathcal{A} \rVert_{L^{q^*}(B_R(x_0))}\lVert \nabla u \rVert_{L^p(B_R(x_0))}\\
                                                        &\leq C(\Lambda)R^{(m-p)/p+m/q-2}\mathcal{M}_p(x_0,2R)\\
                                                        &\qquad\cdot \left( \lVert \Omega \rVert_{M_2 ^2(B_{2R}(x_0))}+R^{2-2/q}\lVert W \rVert_{M_2 ^q(B_{2R}(x_0))} \right),
  \end{align*}
  and
  \begin{align*}
    \III&=\int_{B_R(x_0)}P_{ab}\tilde{\upsilon}^b\varphi^b\leq C\lVert \tilde{\upsilon} \rVert_{L^1(B_R(x_0))}\lVert \varphi \rVert_{L^\infty(B_R(x_0))}\\
        &\leq C \left( R^{m-m/\bar{q}}\lVert \bar{\upsilon} \rVert_{L^{\bar{q}}(B_R(x_0))}+ R^{m-2/s}\lVert \upsilon \rVert_{M_2 ^s(B_R(x_0))} \right)\lVert \varphi \rVert_{L^\infty(B_R(x_0))}\\
        &\leq C(\Lambda)\epsilon(m,\Lambda)R^{m/p-1}\mathcal{M}_p(x_0,2R)\left(1+R^{2-2/q}\right)\\
        &\qquad+C(\Lambda)R^{m/p-1}R^{2-2/s}\lVert \upsilon \rVert_{M_2 ^s(B_{2R}(x_0))}.
  \end{align*}

  In conclusion,
  \begin{equation}\label{eq:grad-f}
    \begin{split}
      \lVert \nabla f \rVert_{L^p(B_R(x_0))}&\leq C(p,m,\Lambda) \epsilon(m,\Lambda)R^{m/p-1}\mathcal{M}_p(x_0,2R)(1+R^{2-2/q})\\
                                            &\qquad+C(p,m,\Lambda)R^{m/p-1}R^{2-2/s}\lVert \upsilon \rVert_{M_2 ^s(B_{2R}(x_0))}.
    \end{split}
  \end{equation}
  Similarly, \begin{equation}\label{eq:curl-g}
    \lVert \curl g \rVert_{L^p(B_R(x_0))}\leq C(\Lambda)\epsilon(m,\Lambda)R^{m/p-1}\mathcal{M}_p(x_0,2R).
  \end{equation}
  Combining \eqref{eq:grad-f}, \eqref{eq:curl-g} and \eqref{eq:u-Br}, we obtain that, for $\delta=2-2/s>0$, $\delta'=2-2/q>0$,
  \begin{align*}
    \frac{1}{r^{m-p}}\int_{B_r(x_0)}\lvert \nabla u \rvert^p
    &\leq C(p,\Lambda,m)\biggl[ \left(\frac{R}{r}\right)^{m-p} \epsilon(m,\Lambda)^p\mathcal{M}_p^p(x_0,2R)\left(1+R^{\delta' p}\right) \\
    &\myquad[7] +\left(\frac{R}{r}\right)^{m-p}R^{\delta p}\lVert \upsilon \rVert_{M^s_{m-2}(B_{2R}(x_0))}^p\\
    &\myquad[7] +\left( \frac{r}{R} \right)^p \frac{1}{R^{m-p}}\int_{B_R(x_0)}\lvert \nabla u \rvert^p \biggr].
  \end{align*}

  For some fixed $\gamma\in(0,1)$ with $C(p,\Lambda,m)\gamma^{(p-1)/2}\leq 1/6$ and $(\gamma/2)^{\delta p}<1/4$, we can choose $\epsilon(m,\Lambda)$ small enough, such that  $\epsilon(m,\Lambda)\leq\gamma^{m/p}$. Now, take $r=\gamma R$, the above inequality implies,
  \begin{align*}
    \mathcal{J}_p(x_0,\gamma R)&\leq C(p,\Lambda,m) \gamma^{p-m}\Bigl( \gamma^m \mathcal{M}_p^p(x_0,2R)(2+R^{\delta' p})+R^{\delta p}\lVert \upsilon \rVert_{M^s_{m-2}(B_{2R}(x_0))} \Bigr) \\
                               &\leq \frac{1}{2}\mathcal{M}_p^p(x_0,2R)+C(p,\Lambda,m)\gamma^{p-m}R^{\delta p}\lVert \upsilon \rVert_{M^s_{m-2}(B_{2R}(x_0))}.
  \end{align*}
  Since the above inequality is valid for any $B_{2R}(x_0)\subset B$ and $r<R$, in particular, for any fixed $R'\in(0,1]$, we can pass to the supremum with respect to $B_{2R}(x_0)\subset B_{R'}\subset B$ to obtain (note that $B_{\gamma R}(x_0)\subset B_{\gamma R'/2}$),
  \[
    \begin{split}
      \mathcal{M}_p(\gamma R'/2)&\leq \frac{1}{2}\mathcal{M}_p(R')+C(p,\Lambda,m)\gamma^{p-m}R'^{\delta p}\lVert \upsilon \rVert_{M_2 ^s(B_{R'})}^p\\
                                &\leq \frac{1}{2}\mathcal{M}_p(R')+C(p,\Lambda,m)\gamma^{p-m}R'^{\delta p}\lVert \upsilon \rVert_{M_2 ^s(B)}^p.
    \end{split}
  \]
  Let $\lambda=\gamma/2$ and $\alpha$ satisfies $\lambda^{p\alpha}=1/2$, i.e., $\alpha=[\ln_{\gamma/2}(1/2)]/p\in(0,1)$, then
  \begin{equation}\label{eq:iterate}
    \mathcal{M}_p(\lambda R')\leq \lambda^{p\alpha} \mathcal{M}_p(R')+2^{p-m}C(p,\Lambda,m)\lambda^{p-m}R'^{\delta p}\lVert \upsilon \rVert_{M_2 ^s(B)}^p.
  \end{equation}

  Now, we iterate \eqref{eq:iterate} as follows: for any given $r\in(0,\lambda)$, suppose $\lambda^{l+1}<r\leq\lambda^l$ for some $l\in \mathbb{N}$, and we denote $C \mathpunct{:}=C(p,\Lambda,m)2^{p-m}\lVert \upsilon \rVert^p_{M_2 ^s(B)}$ for simplicity, then since $\lambda^{\delta p}<1/4$ and $\lambda^{p\alpha}=1/2$, we know that $\lambda^{(\alpha-\delta)p}>2$, and
  \begin{align*}
    \mathcal{M}_p(r)&\leq \mathcal{M}_p(\lambda^l)\leq \lambda^{p\alpha} \mathcal{M}_p(\lambda^{l-1})+C\lambda^{p-m+(l-1)\delta p}\\
                    &\leq \lambda^{lp\alpha}\mathcal{M}_p(1)+C\lambda^{p-m}\sum_{i=1}^l\lambda^{(i-1)p\alpha+(l-i)\delta p}\\
                    &\leq\lambda^{lp\alpha}\mathcal{M}_p(1)+C\lambda^{p-m-\delta p}\cdot\lambda^{lp\alpha}\\
                    &\leq2\left(\epsilon(m,\lambda)^p+C2^{(m+\delta p-p)/p\alpha}\right)r^{p\alpha},
  \end{align*}
  because $\mathcal{M}_p(1)\leq \mathcal{M}_2(1)<\epsilon(m,\Lambda)^p$. The required estimate in \autoref{mthm:reg-anti-protential} follows from the characteristic of H\"older continuity by Dirichlet growth (see \cite{Giaquinta1983Multiple}*{Chap.~III, Thm.~1.1}).
\end{proof}

\begin{rmk}
  Similarly to the observation as in \cite{Rupflin2008improved}*{Prop.~2.1}, the same conclusion as in \autoref{mthm:reg-anti-protential} holds if we replace $\lVert \upsilon \rVert_{M_{m-2}^s(B)}$ by $\lVert \upsilon \rVert_{L^s(B)}$ and require that $s>m/2$. See also \cite{Sharp2014Higher}.
\end{rmk}

Now, we are ready to prove \autoref{mthm:regularity}. For $m=2$, $n=K+1$, by \autoref{prop:weak-dirac-harmonic-local}, the equation of $\phi$ is given by \eqref{eq:phi-simple}, which has exactly the same form of \eqref{eq:thm-anti-u}. However, we need to verify the conditions in \autoref{mthm:reg-anti-protential}. Note that, \eqref{eq:thm-anti-upsilon} is just $-\div(\lambda(\varphi)V^\sharp)=w$, which is included in the equation \eqref{eq:phi-simple}. By \autoref{thm:reg-spinor}, $\psi\in L^p(D)$ for any $p>4$. Therefore, $w\in L^q(D)$ for any $1<q<2$. Similarly, $\upsilon\in L^q(D)$ for any $1<q<2$. Here, we need the remark after \autoref{prop:iso-embedding} to show the $L^\infty$-boundedness of the components for Christoffel symbols, the pseudo-Riemannian curvature and the second fundamental form. The smallness condition \eqref{eq:thm-anti-condi} is satisfied provided we take $B=B_{4R}(x_0)$ small enough. The rest conditions in \autoref{mthm:reg-anti-protential} are easy to verify, and it implies that $\phi$ is H\"older continuous in $B_{2R}(x_0)$. By the arbitrariness of $B$, we show the H\"older continuous of $\phi$ over the Riemann surface $M$. This finishes the proof of \autoref{mthm:regularity}.
\section{The smoothness of weakly Dirac-harmonic maps}\label{sec:smooth}
The main content of this section is devoted to improving the regularity of weakly Dirac-harmonic maps. Recall that for weakly Dirac-harmonic map $(\phi,\psi)\in \mathcal{X}^w$, by \autoref{mthm:regularity}, we have already shown that $\phi$ is H\"older continuous. In that case, we can write the Euler--Lagrange equation into \eqref{eq:EL-smooth}. The main obstruction to apply bootstrap argument to the equation \eqref{eq:EL-smooth} of $(\phi,\psi)$ is the $C^{1,\alpha}$-regularity of $\phi$. Since the following argument holds for general pseudo-Riemannian target manifold (not only Lorentzian manifold), we will prove \autoref{mthm:smooth} together.

For the case of Riemannian target, it was proved in \cite{ChenJostLiWang2005Regularity}*{Thm.~2.3} that such $C^{1,\alpha}$-regularity for Dirac-harmonic maps hold. They follow closely to \cite{Jost2011Riemannian}*{Sect.~8.4}, where the general and classical regularity theorem of Ladyzhenskaia--Ural'tzeva~\cite{LadyzhenskaiaUralctzeva1961smoothness}*{Lem.~2} and Morrey~\cite{Morrey1966Multiple}*{Lem.~5.9.1} is applied to the harmonic maps equations. We summarize \cite{ChenJostLiWang2005Regularity}*{Thm.~2.3} into the following abstract form, which has been generalized to the pseudo-Riemannian target effortlessly.
\begin{thm}\label{thm:C-alpha-reg}
  Suppose $\phi$ is a \emph{continuous} map from a disc $D=B_{2R}(x_0)\subset \mathbb{R}^2$ to a pseudo-Riemannian manifold $\mathcal{N} \hookrightarrow \mathbb{R}^K$ and $\psi$ is a $W^{1,4/3}$ section of the twisted spin bundle $\Sigma D\otimes\phi^{-1}T \mathcal{N}$,  satisfy the following elliptic system
  \begin{equation}\label{eq:Dirac-harmonic-sys}
    \begin{cases}
      \Delta\phi^i=-G^i(x,\phi,\psi, d\phi)\\
      \p\psi^i=H^i_k(x,\phi,d\phi)\cdot\psi^k,
    \end{cases}
  \end{equation}
  with $G=(G^1,\ldots,G^K),H=(H_k^i)_{i,k=1}^K$ satisfies the following conditions over $D$,
  \begin{equation}\label{eq:Dirac-harmonic-sys-condi}
    \begin{alignedat}{2}
      \lvert G \rvert                 &\leq C\left( \lvert d\phi \rvert^2+\lvert d\phi \rvert\lvert \psi \rvert^2 \right),&\quad
      \lvert \partial_xG \rvert       &\leq C(\lvert d\phi \rvert^3+\lvert \nabla\psi \rvert\lvert \psi \rvert\lvert d\phi \rvert),\\
      \lvert \partial_{\phi}G \rvert  &\leq C(\lvert d\phi \rvert^2+\lvert \psi \rvert^2\lvert d\phi \rvert),&\quad
      \lvert \partial_{d\phi}G \rvert &\leq C(\lvert d\phi \rvert+\lvert \psi \rvert^2),\\
      \lvert H \rvert                 &\leq C\lvert \phi(x)-\phi(x_0) \rvert\lvert d\phi \rvert.
    \end{alignedat}
  \end{equation}
  Then $\phi\in C^{1,\alpha}(B_R(x_0))$ and $\psi\in C^\alpha(B_R(x_0))$ for any $\alpha\in(0,1)$, provided that $R$ is sufficiently small.
\end{thm}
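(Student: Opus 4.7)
The strategy I would follow is the one laid out by Chen--Jost--Li--Wang \cite{ChenJostLiWang2005Regularity}*{Thm.~2.3} (building on Jost \cite{Jost2011Riemannian}*{Sect.~8.4}, Ladyzhenskaia--Ural'tzeva \cite{LadyzhenskaiaUralctzeva1961smoothness} and Morrey \cite{Morrey1966Multiple}), but with the crucial observation that the hypotheses \eqref{eq:Dirac-harmonic-sys-condi} are \emph{pointwise growth conditions} which make no use of the Riemannian sign of the target metric. Hence nothing in the scheme is affected by passing from a Riemannian to a pseudo-Riemannian target once we view $\mathcal{N}\embedto\mathbb{R}^K$ extrinsically. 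The plan is to first improve $\psi$ using the smallness of $\lvert\phi(x)-\phi(x_0)\rvert$ afforded by continuity, and then to feed this improved regularity into the $\phi$-equation and invoke a classical Morrey-type iteration to reach $C^{1,\alpha}$.

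\textbf{Step 1 (spinor bootstrap).} Shrinking $R$ if necessary, continuity of $\phi$ gives $\lvert\phi(x)-\phi(x_0)\rvert<\eps$ on $B_{2R}(x_0)$, so $\lvert H\rvert\leq C\eps\lvert d\phi\rvert$. Since $d\phi\in L^2$, the coefficient $H$ has arbitrarily small $M_2^2$-Morrey norm on small balls. The Dirac system $\p\psi=H\psi$ is then a small-coefficient linear system and the $L^p$-bootstrap lemma of \cite{JostKeslerTolksdorfWuZhu2018Regularity} (cited just before this theorem in \autoref{sec:regularity-spinor}) gives $\psi\in L^p(B_R(x_0))$ for every $p<\infty$. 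Combined with the equation and standard $L^p$-theory for the Dirac operator, one obtains $\psi\in W^{1,p}(B_{R}(x_0))$ for some $p>2$, and in particular $\psi\in C^\alpha\cap L^\infty$ on a smaller ball.

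\textbf{Step 2 ($\phi$ bootstrap via Morrey iteration).} With $\psi\in L^\infty$ and $\nabla\psi\in L^p$ for some $p>2$, the hypotheses \eqref{eq:Dirac-harmonic-sys-condi} reduce to the classical natural-growth conditions
\[
\lvert G\rvert\leq C(1+\lvert d\phi\rvert^2),\quad\lvert \partial_x G\rvert+\lvert\partial_\phi G\rvert\leq C(1+\lvert d\phi\rvert^3),\quad\lvert\partial_{d\phi}G\rvert\leq C(1+\lvert d\phi\rvert),
\]
possibly with an $L^p$ inhomogeneity coming from $\nabla\psi$. A hole-filling/Morrey-growth argument (essentially the two-dimensional case of \cite{LadyzhenskaiaUralctzeva1961smoothness}*{Lem.~2}, cf.\ \cite{Morrey1966Multiple}*{Lem.~5.9.1}) then yields $d\phi\in M_2^{2+\delta}(B_R(x_0))$ for some $\delta>0$, which by Morrey's Dirichlet-growth characterization (cf.\ \cite{Giaquinta1983Multiple}*{Chap.~III, Thm.~1.1}) gives $\phi\in C^\alpha$ with control on $d\phi$; differentiating the equation and applying the estimates on $\partial_x G$, $\partial_\phi G$, $\partial_{d\phi}G$ upgrades this to $\phi\in C^{1,\alpha}(B_R(x_0))$. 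Substituting $d\phi\in L^\infty$ back into the Dirac system finally gives $\psi\in C^{\alpha}(B_R(x_0))$ by standard elliptic regularity for $\p$ with bounded coefficients.

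\textbf{Main obstacle.} The delicate point is not the pseudo-Riemannian target (which enters only through coefficient bounds already folded into \eqref{eq:Dirac-harmonic-sys-condi} by means of the remark after \autoref{prop:iso-embedding}) but ensuring that each bootstrap is carried out with small enough constants. Concretely, one has to track how small $R$ must be so that simultaneously (i) continuity of $\phi$ produces the smallness of $H$ needed in Step~1, (ii) absolute continuity of the Dirichlet integral produces the smallness of $\lVert d\phi\rVert_{M_2^2}$ needed at each iteration, and (iii) the cubic terms in $\partial_x G,\partial_\phi G$ are absorbed in the Morrey iteration in Step~2. Once these smallness conditions are arranged on a common ball, the scheme proceeds identically to the Riemannian case and yields the stated $C^{1,\alpha}\times C^\alpha$ regularity.
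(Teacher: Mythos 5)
Your proposal reverses the order of the two bootstraps relative to the paper, and that reversal is exactly where a gap opens up. In the paper's sketch the first—and essential—step is a difference-quotient argument applied directly to the $\phi$-equation, yielding the a priori estimate
\[
  \lVert \nabla^2\phi \rVert_{L^2(B_R(x_0))}+\lVert d\phi \rVert_{L^4(B_R(x_0))}^2\leq C\lVert d\phi \rVert_{L^2(B_{2R}(x_0))}
\]
(for small $R$, using the continuity of $\phi$ to absorb the quadratic nonlinearity, and the prior fact $\psi\in L^p$ for $p<\infty$ to control $\lvert d\phi\rvert\lvert\psi\rvert^2$). Only \emph{after} that do they feed $d\phi\in W^{1,2}\subset L^p$ (any $p$) into the Dirac equation to get $H\psi\in L^p$ for $p>2$, hence $\psi\in W^{1,p}$ and $\psi\in C^\alpha$.

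Your Step~1 tries to bootstrap $\psi$ first, with only $d\phi\in L^2$. The JostKeslerTolksdorfWuZhu lemma correctly gives $\psi\in L^p$ for all $p<\infty$, but the next sentence—``standard $L^p$-theory for the Dirac operator gives $\psi\in W^{1,p}(B_R(x_0))$ for some $p>2$, in particular $\psi\in C^\alpha\cap L^\infty$''—does not follow. With $\lvert H\rvert\leq C\eps\lvert d\phi\rvert$, $d\phi\in L^2$ and $\psi\in L^p$ for every finite $p$, H\"older gives $H\psi\in L^q$ only for $q<2$, so elliptic estimates yield $\psi\in W^{1,q}$ for $q<2$; the two-dimensional Sobolev embedding $W^{1,q}\embedto L^{2q/(2-q)}$ then just returns $\psi\in L^p$ for all $p<\infty$, which is not $L^\infty$ and is not $C^\alpha$. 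The obstruction is critical: you cannot cross $q=2$ in the Dirac bootstrap until $d\phi$ itself is better than $L^2$. Since your Step~2 starts from ``$\psi\in L^\infty$ and $\nabla\psi\in L^p$ for some $p>2$'' to reduce the $\phi$-equation to natural-growth conditions, the gap in Step~1 propagates and the whole scheme does not close. To repair it you would need an argument that improves $d\phi$ \emph{before} (or in tandem with) the spinor bootstrap; the paper's difference-quotient step serves precisely this purpose.
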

\begin{proof}[Sketch of the proof]
  The idea is to show first that $\phi\in W^{2,2}\cap W^{1,4}(B_R(x_0),\mathcal{N})$, which is based on the relation of weak derivatives and difference quotients, i.e., for $\phi\in C^0\cap W^{1,4}\cap W^{3,2}(B_{2R}(x_0),\mathcal{N})$, we can prove for small enough $R$,
  \begin{equation}\label{eq:phi-w22}
    \lVert \nabla^2\phi \rVert_{L^2(B_R(x_0))}+\lVert d\phi \rVert_{L^4(B_R(x_0))}^2\leq C\lVert d\phi \rVert_{L^2(B_{2R}(x_0))},
  \end{equation}
  and then replace the weak derivatives by difference quotients of $\phi$.

  Whenever we have shown $\phi\in W^{2,2}(B_R(x_0))\subset W^{1,p}(B_R(x_0))$ for any $p\geq1$, note the continuity of $\phi$, we know that the right-hand side equation of $\psi$ in \eqref{eq:Dirac-harmonic-sys} is in $L^p(B_R(x_0))$ for any $p>2$, and the $L^p$ estimates of Dirac operator (see \cite{ChenJostLiWang2006Diracharmonic}*{Lem.~4.7}) implies that $\psi\in C^\alpha(B_R(x_0))$ for any $\alpha>0$. The $L^p$ estimates for the equation of $\phi$ in \eqref{eq:Dirac-harmonic-sys} implies that $\phi\in W^{2,p}(B_R(x_0))$ for any $p>2$, and so $\phi\in C^{1,\alpha}(B_R(x_0))$.
\end{proof}
Now, since $\phi$ is continuous, we can choose local coordinates on $\mathcal{N}$, such that $\Gamma_{ij}^k(\phi(x_0))=0$, for all $i,j,k=0,1,\ldots,n$. Then it is easy to verify that \eqref{eq:EL-smooth} can be rewritten into the form of \eqref{eq:Dirac-harmonic-sys}, and the coefficients satisfies the conditions \eqref{eq:Dirac-harmonic-sys-condi}. Therefore, \autoref{thm:C-alpha-reg} implies that $\phi\in C^{1,\alpha}(B_R(x_0))$ and $\psi\in C^\alpha(B_R(x_0))$ for any sufficiently small $B_R(x_0)\subset M$.  By the elliptic estimates for the Dirac operator, we have $\psi\in C^{1,\alpha}(B_R(x_0))$. \autoref{mthm:smooth} follows from the standard bootstrap argument of elliptic theory and the arbitrariness of $x_0$.

\

\appendix
\section{Hodge decomposition, Coulomb gauge of Morrey type and Hardy--BMO duality}\label{sec:appx}
In this appendix, we state some classical results which are needed in the proof of \autoref{mthm:reg-anti-protential}. The first one is the following Sobolev-type Hodge decomposition theorem.
\begin{thm}[\cite{Bethuel1993singular}*{Prop.~I\!I.1}]\label{thm:hodge}
  Suppose $1<p<+\infty$ and $\omega \in W^{l,p}$ is a $k$-form on $\mathbb{R}^n$, then there is a $k-1$-form $\alpha\in W^{l+1,p}$ and a $k+1$-form $\beta\in W^{l+1,p}$, such that
  \[
    \omega=d\alpha+d^*\beta,\quad d^*\alpha=0=d\beta,
  \]
  and
  \[
    \lVert \alpha \rVert_{W^{l+1,p}}+\lVert \beta \rVert_{W^{l+1,p}}\leq C(k,p)\lVert \omega \rVert_{W^{l,p}}.
  \]
  Moreover, $\alpha$ and $\beta$ are unique. If $d\omega=0$ (resp. $d^*\omega=0$), then $\beta=0$ (resp. $\alpha=0$).
\end{thm}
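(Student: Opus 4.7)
The plan is to reduce the decomposition to inverting the Hodge Laplacian $\Delta_H = dd^* + d^*d$, which on flat $\mathbb{R}^n$ acts componentwise as minus the Euclidean Laplacian. Given a $k$-form $\omega \in W^{l,p}(\mathbb{R}^n)$, I would first solve the Poisson system $\Delta_H u = \omega$ componentwise by Fourier inversion, $\widehat{u}_I(\xi) = |\xi|^{-2}\widehat{\omega}_I(\xi)$, and then define $\alpha := d^* u$ and $\beta := d u$. By construction $d\alpha + d^*\beta = (dd^* + d^*d)u = \Delta_H u = \omega$, while the algebraic identities $(d^*)^2 = 0$ and $d^2 = 0$ immediately force $d^*\alpha = 0 = d\beta$, so the abstract shape of the decomposition comes for free; everything else is a matter of estimates and uniqueness.

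The Sobolev bound comes from Calder\'on--Zygmund theory. Since $\alpha$ and $\beta$ depend on $u$ only through its first derivatives, their $(l+1)$-st derivatives are controlled by the $l$-th derivatives of $\partial_i\partial_j u$. On the Fourier side $\partial_i\partial_j u_I$ has symbol $-\xi_i\xi_j|\xi|^{-2}\widehat{\omega}_I$, i.e., the composition $R_iR_j$ of Riesz transforms applied to $\omega_I$; these are Calder\'on--Zygmund operators bounded on $L^p(\mathbb{R}^n)$ for every $1 < p < \infty$, and they commute with weak derivatives, so
\[
  \|\nabla^2 u\|_{W^{l,p}} \leq C(k,p,n)\|\omega\|_{W^{l,p}},
\]
which, after a standard normalisation of the harmonic polynomial ambiguity of the Poisson solution, yields $\|\alpha\|_{W^{l+1,p}} + \|\beta\|_{W^{l+1,p}} \leq C(k,p)\|\omega\|_{W^{l,p}}$.

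For uniqueness, suppose $(\alpha_i,\beta_i)$, $i=1,2$, are two admissible pairs, and set $\widetilde\alpha := \alpha_1 - \alpha_2$, $\widetilde\beta := \beta_1 - \beta_2$, so that $d\widetilde\alpha + d^*\widetilde\beta = 0$ with $d^*\widetilde\alpha = 0 = d\widetilde\beta$. Applying $d$ gives $dd^*\widetilde\beta = 0$, and combining with $d\widetilde\beta = 0$ forces $\Delta_H\widetilde\beta = 0$; symmetrically $\Delta_H\widetilde\alpha = 0$. Componentwise both differences are then harmonic functions lying in $W^{l+1,p}(\mathbb{R}^n) \hookrightarrow L^p(\mathbb{R}^n)$, and the $L^p$-Liouville theorem (a harmonic function in $L^p(\mathbb{R}^n)$ with $1 \leq p < \infty$ vanishes identically) forces $\widetilde\alpha = \widetilde\beta = 0$.

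The final closed/coclosed assertion is an immediate consequence of the same Liouville step: since $d$ and $d^*$ both commute with $\Delta_H$ on flat $\mathbb{R}^n$, $d\omega = 0$ implies $\Delta_H\beta = \Delta_H(du) = d(\Delta_H u) = d\omega = 0$, so $\beta$ is a harmonic form in $W^{l+1,p}$ and hence vanishes; symmetrically $d^*\omega = 0$ forces $\alpha = 0$. The main technical input is the Calder\'on--Zygmund $L^p$ bound for the double Riesz composition $R_iR_j$, together with the correct handling of the harmonic polynomial ambiguity when globalising the Poisson solution on the non-compact space $\mathbb{R}^n$; all remaining steps are algebraic identities in the de Rham complex.
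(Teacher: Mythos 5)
The paper itself gives no proof of this statement: it is quoted directly from Bethuel (Prop.~II.1), so there is no in-paper argument to compare against, and your attempt must stand on its own. Its skeleton (invert the Hodge Laplacian on flat $\mathbb{R}^n$, set $\alpha=d^*u$, $\beta=du$, obtain $d^*\alpha=0=d\beta$ algebraically, estimate via the double Riesz composition, and settle uniqueness and the closed/coclosed cases by an $L^p$ Liouville theorem) is indeed the standard route, and the uniqueness step and the final assertion are handled correctly, precisely because the statement forces $\widetilde\alpha,\widetilde\beta\in L^p$ so that the Liouville theorem applies.

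The genuine gap is in the existence-with-estimates part. The Calder\'on--Zygmund step controls only $\nabla^2u$, i.e.\ $\nabla\alpha$ and $\nabla\beta$, in $W^{l,p}$: the multipliers $\xi_i\xi_j|\xi|^{-2}$ are homogeneous of degree $0$. But the theorem claims the full inhomogeneous norms $\lVert\alpha\rVert_{W^{l+1,p}}+\lVert\beta\rVert_{W^{l+1,p}}$, which include $\lVert\alpha\rVert_{L^p}$ and $\lVert\beta\rVert_{L^p}$, i.e.\ \emph{first} derivatives of $u$; the corresponding symbol $\xi_j|\xi|^{-2}$ is homogeneous of degree $-1$, singular at low frequency, and is not an $L^p\to L^p$ multiplier. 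Your appeal to ``normalising the harmonic polynomial ambiguity'' cannot repair this: adding a harmonic polynomial to $u$ changes $\nabla u$ by a polynomial and cannot place $d^*u$ in $L^p$ if it is not already there, and a scaling check ($\omega\mapsto\omega(\lambda\,\cdot)$, $u\mapsto\lambda^{-2}u(\lambda\,\cdot)$, hence $\alpha\mapsto\lambda^{-1}\alpha(\lambda\,\cdot)$) shows that your construction cannot satisfy $\lVert\alpha\rVert_{L^p}\leq C\lVert\omega\rVert_{W^{l,p}}$ with a constant independent of $\omega$. Relatedly, even the definition $\widehat{u}_I=|\xi|^{-2}\widehat{\omega}_I$ for $\omega\in L^p$ needs justification because of the low-frequency singularity (for $n=2$ the Newtonian potential of an $L^p$ function is not obviously well defined as a tempered distribution), and membership $\alpha,\beta\in W^{l+1,p}$ --- not merely the bound --- is exactly this missing zero-order control. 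Any complete argument must treat the low-frequency/zero-order part separately (homogeneous norms, a bounded or periodic setting, or a cutoff and localisation, which is in fact the only form in which the present paper uses the result, cf.\ the corollary with the cutoff $\rho$ and the estimates \eqref{eq:hodge-est}); this is where the quoted reference does the real work, and it is absent from your proposal.
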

As a corollary, if we take a cutoff function $\rho\in C_0^\infty(B_2)$, with
\[
  \rho|_{B_1}\equiv1,\quad0\leq\rho\leq1,\quad\lvert \nabla\rho \rvert\leq 2/\rho,
\]
and apply \autoref{thm:hodge} to $\rho\omega$, then we obtain $\alpha,\beta\in W^{l+1,p}$, such that
\[
  \omega=d\alpha+d^*\beta,\quad d^*\alpha=0=d\beta,\quad x\in B_1
\]
and
\[
  \lVert \alpha \rVert_{W^{l+1,p}(B_1)}+\lVert \beta \rVert_{W^{l+1,p}(B_1)}\leq C(k,p)\lVert \omega \rVert_{W^{l,p}(B_2)}.
\]

The following lemma is a consequence of the existence of Uhlenbeck's Coulomb gauge (see \cite{RiviereStruwe2008Partial}*{Lem.~3.1}) and Hodge decomposition (see \cite{IqaniecMartin1993Quasiregular}*{Thm.~6.1} and \cite{Bethuel1993singular}*{Prop.~I\!I.1}).
\begin{lem}[\cite{RiviereStruwe2008Partial}*{Lem.~3.1}]\label{lem:coulomb}
  For every $m\in \mathbb{N}$, there exists $\epsilon=\epsilon(m)$, such that for every $\Theta\in L^2(B, \mathfrak{so}(n)\otimes\wedge^1 \mathbb{R}^m)$, $B\subset \mathbb{R}^m$, if $\lVert \Theta \rVert_{M_{2}^2}<\epsilon$, then one can find $P\in W^{1,2}(B;\mathrm{SO}(n))$ and $\xi\in W^{1,2}_0(B,\mathfrak{so}(n)\otimes\wedge^{m-2}\mathbb{R}^m)$ such that
  \begin{equation}\label{eq:coulomb}
    \begin{split}
      P^{-1}\nabla P+P^{-1}\Theta P&=\curl\xi,\quad x\in B,\\
      d\xi&=0,\quad x\in B,\\
      \xi|_{\partial B}&=0,
    \end{split}
  \end{equation}
  with the following estimate holds,
  \begin{equation}\label{eq:coulomb-est}
    \lVert \nabla P \rVert_{M_{2}^2(B)}+\lVert \nabla\xi \rVert_{M_{2}^2(B)}\leq C\lVert \Theta \rVert_{M_{2}^2(B)}.
  \end{equation}
\end{lem}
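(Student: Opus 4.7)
The statement is the Morrey-space version of Uhlenbeck's Coulomb gauge theorem, and my plan is the continuity method combined with Hodge decomposition to extract $\xi$, together with a scale-invariant a~priori estimate for the Morrey bound. Define $I := \{t \in [0,1] : \text{the conclusion of the lemma holds for } t\Theta\}$, with $0 \in I$ witnessed trivially by $P_0 = \operatorname{id}$, $\xi_0 = 0$. It then suffices to show $I$ is both open and closed in $[0,1]$. Openness is an implicit function theorem argument: write $P = P_{t_0}\,e^{\sigma}$ with $\sigma$ taking values in $\mathfrak{so}(n)$, and linearize the Coulomb condition $d^*(P^{-1}\nabla P + tP^{-1}\Theta P) = 0$ in $(\sigma, t)$ at $(0, t_0)$. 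Up to lower-order terms the linearization in $\sigma$ is essentially $\Delta\sigma$ with Neumann-type boundary data, which is invertible on the appropriate mean-zero subspace; the smallness $\|\Theta\|_{M_2^2} < \epsilon$ makes the remaining nonlinear terms a controllable perturbation. Closedness will follow from the uniform Morrey bound \eqref{eq:coulomb-est} via weak $W^{1,2}$ compactness and strong $L^2$ convergence, which is enough for $P_t$ to remain $SO(n)$-valued in the limit and to pass to the limit in the nonlinear equation.

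Once $P$ satisfying the Coulomb condition is in hand, I set $A := P^{-1}\nabla P + P^{-1}\Theta P$, which is co-closed: $d^*A = 0$. By \autoref{thm:hodge} applied to the co-closed 1-form $A$, there exists a 2-form $\beta \in W^{1,2}$ with $A = d^*\beta$ and $d\beta = 0$. Dualizing via the Hodge star identifies $\beta$ with an $(m-2)$-form $\xi$ satisfying $A = \curl \xi$ and $d\xi = 0$; the boundary condition $\xi|_{\partial B} = 0$ is enforced by selecting the Dirichlet representative in the decomposition. The $L^2$ portion of \eqref{eq:coulomb-est} is essentially free: testing the Coulomb equation against $P^{-1}\nabla P$ and integrating by parts yields $\|\nabla P\|_{L^2(B)}^2 \lesssim \|\Theta\|_{L^2(B)}\|\nabla P\|_{L^2(B)}$, and an analogous identity produces the companion bound on $\nabla \xi$ through the Hodge construction.

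The main obstacle is upgrading this energy estimate from $L^2$ to the Morrey norm $M_2^2$. A single global test of the Coulomb equation does not see scales, so to obtain a bound uniform over every sub-ball $B_r(x_0) \subset B$ I expect to perform a local construction on each such ball — carrying out the Uhlenbeck procedure after rescaling to produce a gauge $P_{x_0, r}$ defined on $B_r(x_0)$ — and then compare it with the restriction of the global $P$ via a transition $S := P^{-1}P_{x_0, r}$, which itself solves a small-norm gauge equation whose energy controls the discrepancy in $\nabla P$ on $B_r(x_0)$. The smallness hypothesis $\|\Theta\|_{M_2^2(B)} < \epsilon$ is precisely what keeps the comparison controllable at every scale and makes an iteration over concentric balls converge. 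Carrying out this comparison uniformly in $(x_0, r)$ with the correct scale-invariant constants is the technically delicate step, and it is where the bulk of the effort in Rivi\`ere--Struwe's argument lies.
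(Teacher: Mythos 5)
The paper does not prove this lemma: it is quoted as Rivi\`ere--Struwe's Lemma~3.1, with the preceding remark in \autoref{sec:appx} that it ``is a consequence of the existence of Uhlenbeck's Coulomb gauge and Hodge decomposition.'' There is therefore no in-paper proof to compare against, and I am measuring your sketch against the cited source.

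Your skeleton is correct and is the one in the source: a continuity argument in $t$ for $t\Theta$, openness via linearizing the Coulomb condition around the current gauge, closedness from a uniform a~priori bound plus weak $W^{1,2}$ compactness, and extraction of $\xi$ from the co-closed $1$-form $A := P^{-1}\nabla P + P^{-1}\Theta P$ by Hodge decomposition with Dirichlet condition.

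Two genuine gaps remain, though. First, the claimed $L^2$ energy estimate (``testing against $P^{-1}\nabla P$ yields $\lVert \nabla P\rVert^2 \lesssim \lVert\Theta\rVert\lVert\nabla P\rVert$'') does not follow from a bare Cauchy--Schwarz on $P^{-1}\nabla P + P^{-1}\Theta P = \curl\xi$: combining $\lVert\nabla P\rVert \lesssim \lVert\curl\xi\rVert + \lVert\Theta\rVert$ with $\lVert\curl\xi\rVert \lesssim \lVert\nabla P\rVert + \lVert\Theta\rVert$ is circular. What actually closes the loop is the Jacobian structure: integrating by parts against $\xi$ (using $\xi\vert_{\partial B}=0$) produces $\int\langle\curl(P^{-1}\nabla P),\xi\rangle$, a div--curl quantity, and the Wente/Hardy--BMO inequality turns it into a cubic term $\lesssim \lVert\nabla P\rVert^2\lVert\nabla\xi\rVert$ that is absorbed using the smallness of $\lVert\Theta\rVert$. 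Second, and more seriously, your route to the Morrey bound is not what Rivi\`ere--Struwe do and is circular as sketched. You propose to build a separate Coulomb gauge $P_{x_0,r}$ on every sub-ball and control the transition $S := P^{-1}P_{x_0,r}$; but $S$ solves a gauge-transformation PDE whose coefficients involve the very $\nabla P$, $\curl\xi$ you are trying to bound in $M_2^2$, so without an already-established scale-invariant estimate you cannot bound $\nabla S$ at each scale. Rivi\`ere--Struwe instead run the energy estimate above \emph{locally}: on each $B_r(x_0)\subset B$ they Hodge-decompose the $1$-form $A$ (not a new gauge) and invoke the Hardy--BMO duality on sub-balls --- exactly the role of \autoref{lem:Hardy-BMO-ineq} in the appendix, whose right-hand side is expressed in Morrey norms for precisely this reason --- obtaining a scale-invariant decay estimate that iterates to \eqref{eq:coulomb-est} directly. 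That direct Morrey decay argument is what should replace your transition-function comparison.
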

Recall that the BMO norm is defined as
\[
  \lVert f \rVert_{\mathrm{BMO}}\mathpunct{:}=\sup_{B_r(x)\subset \mathbb{R}^m}\frac{1}{\lvert B_r(x) \rvert}\int_{B_r(x)}\lvert f-f_{x,r} \rvert,
\]
where $f_{x,r}$ is the integral mean over $B_r(x)$, and the norm on Hardy space $\mathcal{H}^1$ is given by
\[
  \lVert f \rVert_{\mathcal{H}^1}\mathpunct{:}=\lVert f \rVert_{L^1}+\lVert f^* \rVert_{L^1},
\]
where
\[
  f^*(x)\mathpunct{:}=\sup_{r>0}\left\lvert \frac{1}{r^m}\int_{\mathbb{R}^m}f(y)\phi\left(\frac{x-y}{r}\right)dy \right\rvert,\quad\forall\phi\in C_0^\infty(\mathbb{R}^m),\quad \int_{\mathbb{R}^m}\phi=1.
\]

The key estimate in the proof of \autoref{mthm:reg-anti-protential} is given by the following lemma, which is usually referred to as Hardy-BMO duality. The following form is due to Fefferman \cite{Fefferman1971Characterizations} and Evans \cite{Evans1991Partial}, we also refer to Bethuel \cite{Bethuel1993singular}*{Prop.~I\!I\!I.2} for a proof.
\begin{lem}\label{lem:Hardy-BMO-ineq}
  Suppose $m\geq2$, $1\leq s<\infty$ and $1<p<\infty$, $p^*=p/(p-1)$. For any ball $B_R(x_0)\subset \mathbb{R}^m$, $f\in W^{1,p}(B_R(x_0))$, $g\in W^{1,p^*}(B_R(x_0),\wedge^{m-2} \mathbb{R}^m)$ and $h\in W^{1,s}(B_{2R}(x_0))$, if
  \[
    f|_{\partial B_R(x_0)}=0\quad\text{or}\quad g|_{\partial B_R(x_0)}=0,
  \]
  and
  \[
    \lVert \nabla h \rVert_{M_{s}^s(B_{2R}(x_0))}<+\infty,
  \]
  then
  \[
    \int_{B_R(x_0)}f\curl g\cdot\nabla h\leq C(m,s,p)\lVert \nabla f \rVert_{L^p(B_R(x_0))}\lVert \curl g \rVert_{L^{p^*}(B_R(x_0))}\lVert \nabla h \rVert_{M_{s}^s(B_{2R}(x_0))}.
  \]
\end{lem}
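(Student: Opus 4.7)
The plan is to combine integration by parts with the div-curl lemma of Coifman--Lions--Meyer--Semmes (CLMS) and Fefferman's Hardy--BMO duality. Assume without loss of generality that $f|_{\partial B_R(x_0)}=0$; the other case is handled by an analogous integration by parts that instead transfers the exterior derivative onto $g$. Since $\curl g$ is automatically divergence-free as a vector field (the identity $d^{2}g=0$ reads $\div(\curl g)=0$), integrating by parts gives
\[
  \int_{B_R(x_0)} f\,\curl g\cdot\nabla h = -\int_{B_R(x_0)} h\,\nabla f\cdot\curl g.
\]
A second application of the same identity shows $\int_{B_R(x_0)} \nabla f\cdot\curl g=0$, so we may subtract the constant $c:=h_{B_{2R}(x_0)}$ (the integral mean of $h$ over $B_{2R}(x_0)$) freely to obtain
\[
  \int_{B_R(x_0)} f\,\curl g\cdot\nabla h = -\int_{B_R(x_0)} (h-c)\,\nabla f\cdot\curl g.
\]

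The key step is to recognize $\nabla f\cdot\curl g$ as an element of the real Hardy space $\mathcal{H}^{1}(\mathbb{R}^{m})$. Extend $f$ by zero to all of $\mathbb{R}^{m}$; this preserves the curl-freeness of $\nabla f$. Extend $g$ so that $\curl g$ remains divergence-free on $\mathbb{R}^{m}$ with $L^{p^{*}}$ norm comparable to its norm on $B_{R}(x_{0})$ (for example by a Hodge decomposition on a slightly larger ball). The CLMS div-curl lemma then yields
\[
  \lVert \nabla f\cdot\curl g\rVert_{\mathcal{H}^{1}(\mathbb{R}^{m})}
  \leq C(m,p)\,\lVert \nabla f\rVert_{L^{p}(B_R(x_0))}\,\lVert \curl g\rVert_{L^{p^{*}}(B_R(x_0))}.
\]
By Fefferman's Hardy--BMO duality,
\[
  \left|\int_{\mathbb{R}^{m}} (h-c)\,\nabla f\cdot\curl g\right|
  \leq C\,\lVert \nabla f\cdot\curl g\rVert_{\mathcal{H}^{1}(\mathbb{R}^{m})}\,\lVert h\rVert_{\mathrm{BMO}(B_{2R}(x_0))}.
\]

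It remains to bound the BMO semi-norm by the Morrey norm of $\nabla h$: for every sub-ball $B_r\subset B_{2R}(x_0)$, Poincaré's and Hölder's inequalities give
\[
  \frac{1}{|B_{r}|}\int_{B_{r}}|h-h_{B_{r}}|\leq C\, r^{\,1-m/s}\lVert \nabla h\rVert_{L^{s}(B_{r})}\leq C(m,s)\lVert \nabla h\rVert_{M_{s}^{s}(B_{2R}(x_0))},
\]
whence $\lVert h\rVert_{\mathrm{BMO}(B_{2R}(x_0))}\leq C(m,s)\lVert \nabla h\rVert_{M_{s}^{s}(B_{2R}(x_0))}$. Chaining the three estimates yields the claimed inequality.

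The principal technical obstacle is making the div-curl lemma applicable on all of $\mathbb{R}^{m}$: the zero extension of $f$ preserves curl-freeness, but preserving the divergence-free character of $\curl g$ globally while controlling its $L^{p^{*}}$ norm by the original $B_{R}$-norm is more delicate. One either invokes a localized version of CLMS (as carried out in Bethuel's Proposition~III.2) or uses Hodge decomposition on a slightly enlarged ball together with a cutoff that respects the differential structure. Once this is arranged, everything else reduces to standard estimates.
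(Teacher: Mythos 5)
The paper does not actually prove this lemma; it only cites Fefferman, Evans, and Bethuel's Proposition~III.2, and your proposal is a correct reconstruction of exactly that standard argument (integration by parts to transfer the derivative, orthogonality of $\nabla f$ and $\curl g$ to subtract the mean of $h$, CLMS div--curl to place $\nabla f\cdot\curl g$ in $\mathcal{H}^1$, Fefferman duality, and Poincar\'e--H\"older to bound $\mathrm{BMO}$ by the Morrey norm of $\nabla h$). Two technical points that you correctly flag but leave to the references deserve emphasis if this were to be written out in full: (i) the extension of $\curl g$ to a globally divergence-free field with controlled $L^{p^*}$ norm, and (ii) the passage from the global $\mathcal{H}^1$--$\mathrm{BMO}$ pairing on $\mathbb{R}^m$ to the local $\mathrm{BMO}$ bound on $B_{2R}(x_0)$ (the product $\nabla f\cdot\curl g$, after extending $f$ by zero, is supported in $B_R(x_0)$ and has zero mean, so it is essentially an $\mathcal{H}^1$-atom, which is what makes the local $\mathrm{BMO}$ control suffice); both are handled in Bethuel's Proposition~III.2 exactly as you indicate.
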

\begin{bibdiv}
\begin{biblist}

\bib{AlbertssonLindstromZabzine2003supersymmetric}{article}{
      author={Albertsson, Cecilia},
      author={Lindstr\"om, Ulf},
      author={Zabzine, Maxim},
       title={{$N=1$} supersymmetric sigma model with boundaries. {I}},
        date={2003},
        ISSN={0010-3616},
     journal={Comm. Math. Phys.},
      volume={233},
      number={3},
       pages={403\ndash 421},
         url={https://doi.org/10.1007/s00220-002-0756-6},
      review={\MR{1962116}},
}

\bib{Bethuel1993singular}{article}{
      author={Bethuel, Fabrice},
       title={On the singular set of stationary harmonic maps},
        date={1993},
        ISSN={0025-2611},
     journal={Manuscripta Math.},
      volume={78},
      number={4},
       pages={417\ndash 443},
         url={https://doi.org/10.1007/BF02599324},
      review={\MR{1208652}},
}

\bib{ChenJostLiWang2005Regularity}{article}{
      author={Chen, Qun},
      author={Jost, J\"urgen},
      author={Li, Jiayu},
      author={Wang, Guofang},
       title={Regularity theorems and energy identities for {D}irac-harmonic
  maps},
        date={2005},
        ISSN={0025-5874},
     journal={Math. Z.},
      volume={251},
      number={1},
       pages={61\ndash 84},
         url={https://doi.org/10.1007/s00209-005-0788-7},
      review={\MR{2176464}},
}

\bib{ChenJostLiWang2006Diracharmonic}{article}{
      author={Chen, Qun},
      author={Jost, J\"urgen},
      author={Li, Jiayu},
      author={Wang, Guofang},
       title={Dirac-harmonic maps},
        date={2006},
        ISSN={0025-5874},
     journal={Math. Z.},
      volume={254},
      number={2},
       pages={409\ndash 432},
         url={https://doi.org/10.1007/s00209-006-0961-7},
      review={\MR{2262709}},
}

\bib{ChenJostWangZhu2013boundary}{article}{
      author={Chen, Qun},
      author={Jost, J\"urgen},
      author={Wang, Guofang},
      author={Zhu, Miaomiao},
       title={The boundary value problem for {D}irac-harmonic maps},
        date={2013},
        ISSN={1435-9855},
     journal={J. Eur. Math. Soc. (JEMS)},
      volume={15},
      number={3},
       pages={997\ndash 1031},
         url={https://doi.org/10.4171/JEMS/384},
      review={\MR{3085099}},
}

\bib{Deligne1999Quantum}{book}{
      editor={Deligne, Pierre},
      editor={others},
       title={Quantum fields and strings: a course for mathematicians. {V}ol.
  1, 2},
   publisher={American Mathematical Society, Providence, RI; Institute for
  Advanced Study (IAS), Princeton, NJ},
        date={1999},
        ISBN={0-8218-1198-3},
      review={\MR{1701618}},
}

\bib{Evans1991Partial}{article}{
      author={Evans, Lawrence~C.},
       title={Partial regularity for stationary harmonic maps into spheres},
        date={1991},
        ISSN={0003-9527},
     journal={Arch. Rational Mech. Anal.},
      volume={116},
      number={2},
       pages={101\ndash 113},
         url={https://doi.org/10.1007/BF00375587},
      review={\MR{1143435}},
}

\bib{Fefferman1971Characterizations}{article}{
      author={Fefferman, Charles},
       title={Characterizations of bounded mean oscillation},
        date={1971},
        ISSN={0002-9904},
     journal={Bull. Amer. Math. Soc.},
      volume={77},
       pages={587\ndash 588},
         url={https://doi.org/10.1090/S0002-9904-1971-12763-5},
      review={\MR{0280994}},
}

\bib{Giaquinta1983Multiple}{book}{
      author={Giaquinta, Mariano},
       title={Multiple integrals in the calculus of variations and nonlinear
  elliptic systems},
      series={Annals of Mathematics Studies},
   publisher={Princeton University Press, Princeton, NJ},
        date={1983},
      volume={105},
        ISBN={0-691-08330-4; 0-691-08331-2},
      review={\MR{717034}},
}

\bib{Helein2002Harmonic}{book}{
      author={H\'elein, Fr\'ed\'eric},
       title={Harmonic maps, conservation laws and moving frames},
     edition={Second},
      series={Cambridge Tracts in Mathematics},
   publisher={Cambridge University Press, Cambridge},
        date={2002},
      volume={150},
        ISBN={0-521-81160-0},
         url={https://doi.org/10.1017/CBO9780511543036},
        note={Translated from the 1996 French original, With a foreword by
  James Eells},
      review={\MR{1913803}},
}

\bib{Helein2004Removability}{article}{
      author={H\'elein, Fr\'ed\'eric},
       title={Removability of singularities of harmonic maps into
  pseudo-{R}iemannian manifolds},
        date={2004},
        ISSN={0240-2963},
     journal={Ann. Fac. Sci. Toulouse Math. (6)},
      volume={13},
      number={1},
       pages={45\ndash 71},
         url={http://afst.cedram.org/item?id=AFST_2004_6_13_1_45_0},
      review={\MR{2060029}},
}

\bib{HajaszStrzeleckiZhong2008new}{article}{
      author={Haj\l~asz, Piotr},
      author={Strzelecki, Pawe\l},
      author={Zhong, Xiao},
       title={A new approach to interior regularity of elliptic systems with
  quadratic {J}acobian structure in dimension two},
        date={2008},
        ISSN={0025-2611},
     journal={Manuscripta Math.},
      volume={127},
      number={1},
       pages={121\ndash 135},
         url={https://doi.org/10.1007/s00229-008-0199-2},
      review={\MR{2429917}},
}

\bib{IwaniecMartin2001Geometric}{book}{
      author={Iwaniec, Tadeusz},
      author={Martin, Gaven},
       title={Geometric function theory and non-linear analysis},
      series={Oxford Mathematical Monographs},
   publisher={The Clarendon Press, Oxford University Press, New York},
        date={2001},
        ISBN={0-19-850929-4},
      review={\MR{1859913}},
}

\bib{IqaniecMartin1993Quasiregular}{article}{
      author={Iwaniec, Tadeusz},
      author={Martin, Gaven},
       title={Quasiregular mappings in even dimensions},
        date={1993},
        ISSN={0001-5962},
     journal={Acta Math.},
      volume={170},
      number={1},
       pages={29\ndash 81},
         url={https://doi.org/10.1007/BF02392454},
      review={\MR{1208562}},
}

\bib{Isobe1998Regularity}{article}{
      author={Isobe, Takeshi},
       title={Regularity of harmonic maps into a static {L}orentzian manifold},
        date={1998},
        ISSN={1050-6926},
     journal={J. Geom. Anal.},
      volume={8},
      number={3},
       pages={447\ndash 463},
         url={https://doi.org/10.1007/BF02921797},
      review={\MR{1707739}},
}

\bib{JostKeslerTolksdorfWuZhu2018Regularity}{article}{
      author={Jost, J\"urgen},
      author={Ke\ss~ler, Enno},
      author={Tolksdorf, J\"urgen},
      author={Wu, Ruijun},
      author={Zhu, Miaomiao},
       title={Regularity of solutions of the nonlinear sigma model with
  gravitino},
        date={2018},
        ISSN={0010-3616},
     journal={Comm. Math. Phys.},
      volume={358},
      number={1},
       pages={171\ndash 197},
         url={https://doi.org/10.1007/s00220-017-3001-z},
      review={\MR{3772035}},
}

\bib{Jost2009Geometry}{book}{
      author={Jost, J\"urgen},
       title={Geometry and physics},
   publisher={Springer-Verlag, Berlin},
        date={2009},
        ISBN={978-3-642-00540-4},
         url={https://doi.org/10.1007/978-3-642-00541-1},
      review={\MR{2546999}},
}

\bib{Jost2011Riemannian}{book}{
      author={Jost, J\"urgen},
       title={Riemannian geometry and geometric analysis},
     edition={Sixth},
      series={Universitext},
   publisher={Springer, Heidelberg},
        date={2011},
        ISBN={978-3-642-21297-0},
         url={https://doi.org/10.1007/978-3-642-21298-7},
      review={\MR{2829653}},
}

\bib{KramerStephaniHerltMacCallum1980Exact}{book}{
      author={Kramer, D.},
      author={Stephani, H.},
      author={Herlt, E.},
      author={MacCallum, M.},
       title={Exact solutions of {E}instein's field equations},
   publisher={Cambridge University Press, Cambridge-New York},
        date={1980},
        ISBN={0-521-23041-1},
        note={Edited by Ernst Schmutzer, Cambridge Monographs on Mathematical
  Physics},
      review={\MR{614593}},
}

\bib{LawsonMichelsohn1989Spin}{book}{
      author={Lawson, H.~Blaine, Jr.},
      author={Michelsohn, Marie-Louise},
       title={Spin geometry},
      series={Princeton Mathematical Series},
   publisher={Princeton University Press, Princeton, NJ},
        date={1989},
      volume={38},
        ISBN={0-691-08542-0},
      review={\MR{1031992}},
}

\bib{LadyzhenskaiaUralctzeva1961smoothness}{article}{
      author={Ladyzhenskaia, O.~A.},
      author={Ural'tzeva, N.~N.},
       title={On the smoothness of weak solutions of quasilinear equations in
  several variables and of variational problems},
        date={1961},
        ISSN={0010-3640},
     journal={Comm. Pure Appl. Math.},
      volume={14},
       pages={481\ndash 495},
         url={https://doi.org/10.1002/cpa.3160140323},
      review={\MR{0149076}},
}

\bib{Morrey1966Multiple}{book}{
      author={Morrey, Charles~B., Jr.},
       title={Multiple integrals in the calculus of variations},
      series={Die Grundlehren der mathematischen Wissenschaften, Band 130},
   publisher={Springer-Verlag New York, Inc., New York},
        date={1966},
      review={\MR{0202511}},
}

\bib{ONeill1983SemiRiemannian}{book}{
      author={O'Neill, Barrett},
       title={Semi-{R}iemannian geometry},
      series={Pure and Applied Mathematics},
   publisher={Academic Press, Inc. [Harcourt Brace Jovanovich, Publishers], New
  York},
        date={1983},
      volume={103},
        ISBN={0-12-526740-1},
        note={With applications to relativity},
      review={\MR{719023}},
}

\bib{Riviere2007Conservation}{article}{
      author={Rivi\`ere, Tristan},
       title={Conservation laws for conformally invariant variational
  problems},
        date={2007},
        ISSN={0020-9910},
     journal={Invent. Math.},
      volume={168},
      number={1},
       pages={1\ndash 22},
         url={https://doi.org/10.1007/s00222-006-0023-0},
      review={\MR{2285745}},
}

\bib{RiviereStruwe2008Partial}{article}{
      author={Rivi\`ere, Tristan},
      author={Struwe, Michael},
       title={Partial regularity for harmonic maps and related problems},
        date={2008},
        ISSN={0010-3640},
     journal={Comm. Pure Appl. Math.},
      volume={61},
      number={4},
       pages={451\ndash 463},
         url={https://doi.org/10.1002/cpa.20205},
      review={\MR{2383929}},
}

\bib{Rupflin2008improved}{article}{
      author={Rupflin, Melanie},
       title={An improved uniqueness result for the harmonic map flow in two
  dimensions},
        date={2008},
        ISSN={0944-2669},
     journal={Calc. Var. Partial Differential Equations},
      volume={33},
      number={3},
       pages={329\ndash 341},
         url={https://doi.org/10.1007/s00526-008-0164-7},
      review={\MR{2429534}},
}

\bib{Schikorra2010remark}{article}{
      author={Schikorra, Armin},
       title={A remark on gauge transformations and the moving frame method},
        date={2010},
        ISSN={0294-1449},
     journal={Ann. Inst. H. Poincar\'e Anal. Non Lin\'eaire},
      volume={27},
      number={2},
       pages={503\ndash 515},
         url={https://doi.org/10.1016/j.anihpc.2009.09.004},
      review={\MR{2595189}},
}

\bib{Sharp2014Higher}{article}{
      author={Sharp, Ben},
       title={Higher integrability for solutions to a system of critical
  elliptic {PDE}},
        date={2014},
        ISSN={1073-2772},
     journal={Methods Appl. Anal.},
      volume={21},
      number={2},
       pages={221\ndash 240},
         url={https://doi.org/10.4310/MAA.2014.v21.n2.a3},
      review={\MR{3251921}},
}

\bib{SharpZhu2016Regularity}{article}{
      author={Sharp, Ben},
      author={Zhu, Miaomiao},
       title={Regularity at the free boundary for {D}irac-harmonic maps from
  surfaces},
        date={2016},
        ISSN={0944-2669},
     journal={Calc. Var. Partial Differential Equations},
      volume={55},
      number={2},
       pages={Art. 27, 30},
         url={https://doi.org/10.1007/s00526-016-0960-4},
      review={\MR{3465443}},
}

\bib{Wang2010remark}{article}{
      author={Wang, Changyou},
       title={A remark on nonlinear {D}irac equations},
        date={2010},
        ISSN={0002-9939},
     journal={Proc. Amer. Math. Soc.},
      volume={138},
      number={10},
       pages={3753\ndash 3758},
         url={https://doi.org/10.1090/S0002-9939-10-10438-9},
      review={\MR{2661574}},
}

\bib{Wente1969existence}{article}{
      author={Wente, Henry~C.},
       title={An existence theorem for surfaces of constant mean curvature},
        date={1969},
        ISSN={0022-247x},
     journal={J. Math. Anal. Appl.},
      volume={26},
       pages={318\ndash 344},
         url={https://doi.org/10.1016/0022-247X(69)90156-5},
      review={\MR{0243467}},
}

\bib{WangXu2009Regularity}{article}{
      author={Wang, Changyou},
      author={Xu, Deliang},
       title={Regularity of {D}irac-harmonic maps},
        date={2009},
        ISSN={1073-7928},
     journal={Int. Math. Res. Not. IMRN},
      number={20},
       pages={3759\ndash 3792},
         url={https://doi.org/10.1093/imrn/rnp064},
      review={\MR{2544729}},
}

\bib{Zhu2009Regularity}{article}{
      author={Zhu, Miaomiao},
       title={Regularity for weakly {D}irac-harmonic maps to hypersurfaces},
        date={2009},
        ISSN={0232-704X},
     journal={Ann. Global Anal. Geom.},
      volume={35},
      number={4},
       pages={405\ndash 412},
         url={https://doi.org/10.1007/s10455-008-9142-8},
      review={\MR{2506243}},
}

\bib{Zhu2013Regularity}{article}{
      author={Zhu, Miaomiao},
       title={Regularity for harmonic maps into certain pseudo-{R}iemannian
  manifolds},
        date={2013},
        ISSN={0021-7824},
     journal={J. Math. Pures Appl. (9)},
      volume={99},
      number={1},
       pages={106\ndash 123},
         url={https://doi.org/10.1016/j.matpur.2012.06.006},
      review={\MR{3003285}},
}

\end{biblist}
\end{bibdiv}
\end{document}